\numberwithin{equation}{section}
\newtheorem{thm}{Theorem}[section]
\newtheorem{lem}{Lemma}[section]
\newtheorem{rem}{Remark}[section]
\newtheorem{prop}{Proposition}[section]
\newtheorem{defn}{Definition}[section]
\newcommand{\beq}{\begin{eqnarray}}
\newcommand{\eeq}{\end{eqnarray}}
\newcommand{\beqno}{\begin{eqnarray*}}
\newcommand{\eeqno}{\end{eqnarray*}}
\newcommand{\be}{\begin{equation}}
\newcommand{\ee}{\end{equation}}
\newcommand{\beno}{\begin{equation*}}
\newcommand{\eeno}{\end{equation*}}
\newcommand\dl{\delta}
\newcommand\tl{\tilde}
\newcommand\Dl{\Delta}
\newcommand\nb{\nabla}
\newcommand\nn{\nonumber}
\newcommand{\R}{\mathbb{R}}
\newcommand\eps{\epsilon}
\newcommand\fr{\frac}
\newcommand\al{\alpha}
\newcommand{\dv}{\mathrm{div}}
\def\la{\langle}
\def\ra{\rangle}
\newcommand\les{\lesssim}
\newcommand\lm{\lambda}
\newcommand\pr{\partial}
\begin{document}
\title[compressible  Navier-Stokes equations]{Linear stability of the Couette flow in the 3D isentropic compressible  Navier-Stokes equations}
\author{Lan Zeng, Zhifei Zhang, Ruizhao Zi}

\address{ School of Mathematical Science, Peking University,
Beijing, 100871, P.R. China}
\email{zenglan1206@126.com}

\address{ School of Mathematical Science, Peking University, Beijing,
100871, P.R. China}
\email{zfzhang@math.pku.edu.cn}

\address{School of Mathematics and Statistics \& Hubei Key Laboratory of Mathematical  Sciences, Central China Normal University, Wuhan,  430079,  P. R. China.}
\email{rzz@mail.ccnu.edu.cn}

\subjclass[2010]{76E05, 76E19}
\keywords{compressible Navier-Stokes equations, Couette flow, enhanced dissipation}

\begin{abstract}
Consider the linear stability of the three dimensional isentropic compressible Navier-Stokes equations on $\mathbb{T}\times\mathbb{R}\times\mathbb{T}$. We prove the enhanced dissipation phenomenon for the linearized isentropic compressible Navier-Stokes equations around the Couette flow $(y, 0, 0)^\top$. Moreover, the lift-up phenomenon is also shown in this paper. Compared with the 3D incompressible Navier-Stokes equations [Ann. of Math.,185(2017), 541--608], the lift-up effect here is stronger due to the loss of the incompressible condition. 
\end{abstract}
\date{\today}
\maketitle
\section{Introduction}
The motion of the compressible fluids can be described by the isentropic  compressible Navier-Stokes:
\begin{eqnarray}\label{1.1}
\left\{ \begin{array}{ll}
\pr_t\rho+\textrm{div}(\rho v)=0,\\[2mm]
\pr_t(\rho v)+\textrm{div}(\rho v\otimes v)+\fr{\nabla
P(\rho)}{\eps^2}=\mu\Dl v+(\lm+\mu)\nb\mathrm{div}v,\\
 \end{array}
 \right.
\end{eqnarray}
where the unknowns $\rho=\rho(t,x,y,z), v=v(t,x,y,z)$ with $(x, y, z)\in\mathbb{T}\times\mathbb{R}\times\mathbb{T}$ are the density and velocity, respectively. $P=P(\rho)$ is the pressure, which is a smooth function of $\rho$. The parameter  $\eps$  is the Mach number, and the constants $\mu$ and $\lm$ are the viscous coefficients satisfying the physical restriction
\be\label{visco}
\mu>0, \quad 2\mu+3\lm\ge0.
\ee



It is clear that $(\rho_s, v_s):=(1, U(y))$ with $U(y):=(y,0,0)^\top$ the Couette flow is a stationary solution to the isentropic compressible Navier-Stokes equations \eqref{1.1}. In this paper, we will investigate the stability of the stationary  solution $(\rho_s, v_s)$ to \eqref{1.1} at the linear level.
Let $(b, u)$ be the perturbation of $(\rho, v)$ around $(\rho_s, v_s)$ , i.e.,
\beno
b:=\rho-1, \quad u=v-(y,0,0)^\top.
\eeno
Assume w.l.o.g. that $P'(1)=1$, then the linearized system of \eqref{1.1} around $(\rho_s, v_s)$, the unknowns still denoted by $(b, u)$, read as follows:
\be\label{Li}
\begin{cases}
\pr_tb+y\pr_xb+\mathrm{div}u=0,\\[2mm]
\pr_tu+y\pr_xu+(u^2, 0, 0)^\top+
\fr{\nb b}{\eps^2}-\mu\Dl u-(\lm+\mu)\nb\mathrm{div}u=0.
\end{cases}
\ee
\subsection{Background and previous work}
To begin with, let us give a short review of the  extensive mathematical results on  the compressible Navier-Stokes equations in the absence of shear flow. To  the best of our knowledge, the global classical  solutions for 3D flows were first obtained by Matsumura and Nishida \cite{MN80} with initial data close to a constant state. The requirement of smoothness, but not smallness, was subsequently relaxed by Hoff  \cite{Hoff95, Ho97}. In the scaling invariant approach,  Danchin \cite{Danchin00}  constructed the global  solution with initial perturbation around constant state lying in critical space, see \cite{CD10, CMZ10, DM17, FZZ18, Ha11, Li20} for further developments. For arbitrary initial data, the breakthrough was made by Lions \cite{Lions98}, where the existence of large global  weak solutions was proved for the first time, with the adiabatic constant $\gamma\ge\fr{9}{5}$. The restriction on $\gamma$ was later relaxed by Jiang and Zhang \cite{JZ01} to $\gamma>1$ for spherically symmetric data, and by Feireisl,  Novotn\'{y} and Petzeltov\'{a} \cite{FNP01} to $\gamma>\fr32$ for general data. A new compactness approach was developed by Bresch and Jabin \cite{BJ18} to deal with  more general pressure laws and anisotropic viscous stress tensor.


The study of the linear stability of the Couette flow goes back to the classical results of Rayleigh \cite{Ray80} and Kelvin\cite{Kel87} for the incompressible fluid. Mathematically, the Couette flow is known to be spectrally stability for all Reynolds numbers \cite{DR81, Rom73}. On the contrary,  experiments  \cite{Cha02, GG94, OK80, SH01,Yag12} show instability of Couette flow and transition to turbulence for sufficient high Reynolds numbers. This  paradox gained  quite some attention in the fluid mechanics, see \cite{Cas60,  MP77, Orr07, TTR93} for instance. In particular, the transient growth was explained by the non-normality of the linearized operator in \cite{TTR93}. It is worthy pointing out that the velocity will tend to 0 as $t\rightarrow\infty$ even for a time reversible system such as the incompressible Euler equation. This phenomenon is referred to as inviscid damping, due to the relationship with Landau damping in the Vlasov equations \cite{Lan46, MV11}. The nonlinear inviscid damping was first rigorously confirmed by Bedrossian and Masmoudi \cite{BM15} on the domain $\mathbb{T}\times\mathbb{R}$. We refer to \cite{DM18, IJ20, LZ11} and references therein for more interesting results.

For the viscous fluids around the Couette flow, another stability  mechanism, the enhanced dissipation plays an important role in hydrodynamic stability. A problem of great interest related to this phenomenon is  to find the transition threshold. Significant progress has been made by Bedrossian, Germain and Masmoudi \cite{BGM20, BGM15, BGM17} in this direction for the domain $\mathbb{T}\times\mathbb{R}\times\mathbb{T}$. More precisely, in \cite{BGM17}, the authors proved that if the initial perturbation satisfies $\|u_{\mathrm{in}}\|_{H^s}\le\dl \mathrm{Re}^{-\fr{3}{2}}$, with $\mathrm{Re}$ the Reynolds number, $\dl=\dl(s)>0$, and $s>\fr92$, then the solution remains within $O(\mathrm{Re}^{-\fr12})$ of the Couette flow $(y,0,0)^\top$ in $L^2$ for all time, and converges to the streaks solution for $t\gg \mathrm{Re}^{\fr13}$. Recently, Wei and the second author \cite{WZ20} proved the nonlinear stability of the Couette flow for the initial perturbation satisfying $\|u_{\mathrm{in}}\|_{H^2}\le\dl \mathrm{Re}^{-1}$. This result implies that the transition threshold may be insensitive to the regularity of the perturbation above $H^2$. For the 2D incompressible Navier-Stokes equations, the threshold is smaller due do the absence of the lift-up effect. On the domain $\mathbb{T}\times\mathbb{R}$, it was shown in \cite{BVW18} that the threshold is not larger than $\fr12$ for the perturbation of the initial vorticity in $H^s, s>1$. Very recently, Masmoudi and Zhao \cite{MZ20} proved that the perturbation can lie in the almost critical space $H^{\log}_xL^2_y$. Moreover, they showed \cite{MZ19} that the threshold in \cite{BVW18} can be improved to not larger than $\fr13$ for more regular perturbation in Sobolev space. If the boundary is taken into consideration, the problem become much more complicated. We refer to  \cite{Bra04, KLH94, LK02, Rom73} for some rough bounds on the transition threshold. In a recent work \cite{CLWZ20},  Chen, Li, Wei and the second author  proved that in the finite channel $\mathbb{T}\times[-1,1]$  if the initial perturbation satisfies $\|u_{\mathrm{in}}\|_{H^2}\le c_0\mathrm{Re}^{-\fr12}$ for some $c_0 > 0$, then the solution  remains within $O(\mathrm{Re}^{-\fr12})$ of the Couette  flow in $L^\infty$ for all time. It is a challenging task to study the transition threshold problem of the Couette flow in the three dimensional finite channel $\mathbb{T}\times[-1,1]\times\mathbb{T}$.  Despite  the boundary layer effect, it was shown by Chen, Wei and the second author in a very rencent work \cite{CWZ20} that the threshold is still not larger than 1 (just as in the  case without boundary \cite{WZ20}), and thus the conjecture proposed by Trefethen et al \cite{TTR93} was confirmed.

Despite the long history of the stability analysis of compressible flow \cite{LL46}, the compressible stability of Couette  flow is  less well understood than the corresponding incompressible case. Glatzel \cite{Gla88, Gla89} studied the linear inviscid and viscous stability properties of the compressible plane Couette flow via a normal mode analysis in simplified flow model with constant viscous coefficients and a constant density profile. Then Duck et al. \cite{DEH94} investigated more realistic  compressible flow models where the temperature was taken into account. A transient growth mechanism was numerically studied  by Hanifi et al. in \cite{HSS96}, where they found that  the maximum transient growth scales with $\mathrm{Re^2}$ and that the time at which this maximum is reached scales with $\mathrm{Re}$. They also
pointed out this transient growth mechanism  may  physically stems from the  the {\em lift-up effect} as that in incompressible shear flows. More general  results can be found in \cite{HZ98, MDA08}, and see also \cite{GE17} for  further developments recently. An  nonmodal approach was given by Chagelishvili et al. to  consider the inviscid stability of the 2D Couette flow in \cite{Cha94, Cha97}. By means of some formal approximation, they showed that the energy of acoustic perturbations grows linear in time due to the transfer of energy from the mean flow to perturbations. When the viscosity is present,  Farrell and Ioannou \cite{FI00} showed that the inviscid growth is not sustained as $t\rightarrow\infty$ because viscous damping then takes over. Nevertheless, the transient energy growth can last longer period of time than that in  incompressible flow before being strongly affected by viscosity. As for the rigorous mathematical results, Kagei \cite{Kag11} proved that the plane Couette flow in an infinite layer is asymptotically stable if the Reynolds and Mach numbers are sufficiently small. He also showed that similar results hold for more general parallel flows \cite{Kag11-2, Kag12}. Later on, Li and Zhang \cite{LZ17} proved  that the Navier-slip boundary condition at the bottom plays a stabilizing role, and  can be used to relaxed the smallness restriction on the Reynolds number in \cite{Kag11}. Recently, Kagei and Teramoto \cite{KT20} studied the spectrum of the linearized operator around the Couette flow of the compressible Navier-Stokes equations between two concentric rotating cylinders. Finally, we would like to remark that the {\em inviscid damping} and {\em enhanced dissipation} effects were obtained by Antonelli, Dolce and Marcati \cite{ADM20,ADM21} for the two dimensional inviscid and viscous compressible Couette flow on $\mathbb{T}\times\mathbb{R}$. To our best knowledge, the stabilizing and destabilizing effects such as the enhanced dissipation and lift-up effects are still unclear for the 3D compressible Couette flow.

\subsection{The main results} The aim of this paper is to  rigorously study the stability properties of the Couette flow for the 3D linearized isentropic compressible Navier-Stokes equations on $\mathbb{T}\times\mathbb{R}\times\mathbb{T}$. We confirm the enhanced dissipation and the lift-up phenomena which have been deeply studied for the 3D incompressible fluids \cite{BGM17, WZ20}.

To state our results, let us denote
\be
f_0(y, z)=\fr{1}{2\pi}\int_{\mathbb{T}}f(x,y,z)dx, \quad f_\neq=f-f_0,
\ee
and
\be
P_{l=0}f(x,y)=\fr{1}{2\pi}\int_{\mathbb{T}}f(x,y,z)dz, \quad P_{l\neq0}f=f-P_{l=0}f.
\ee
Moreover, we simply write 
\be
f_{00}=P_{l=0}f_0, \quad\mathrm{and}\quad\tl{u}=(u^2, u^3)^\top.
\ee

Our first theorem concerns the  enhanced dissipation of  the projection of the solution to the linearized isentropic Navier-Stokes equations \eqref{Li} onto non-zero frequencies in $x$.
\begin{thm}\label{coro1}
Assume that the Mach number $\eps$ and the viscous coefficients $\mu$ and $\lm$ satisfy \eqref{visco} and
\be\label{mu}
\lm+2\mu\le1,\quad\mu^\fr13\eps\le\fr14, \quad\mathrm{and}\quad \mu^\fr13(\lm+2\mu)\eps^2\le1,
\ee
and the initial data $(b_{\mathrm{in}}, u_{\mathrm{in}})\in H^2(\mathbb{T}\times\mathbb{R}\times\mathbb{T})$. Then  there exists a constant $C_1$, depending on $\eps$, but independent of $\lm$ and $\mu$, such that
\be\label{bxz}
\|b_{\neq}(t)\|_{L^2}\le C_1\mu^{-\fr16}e^{-\fr{\mu^\fr13t}{44}}\|((\Dl b_{\mathrm{in}})_{\neq}, (\Dl u_\mathrm{in})_{\neq})\|_{L^2},
\ee

\be\label{prxxu1}
\|u^1_{\neq}(t)\|_{L^2}\le C_1e^{-\fr{\mu^\fr13t}{88}}\|((\Dl b_{\mathrm{in}})_{\neq}, (\Dl u_\mathrm{in})_{\neq})\|_{L^2},
\ee

\be\label{1.14}
\| u^{2}_\neq(t)\|_{L^2}\le C_1\mu^{-\fr13}e^{-\fr{\mu^\fr13t}{88}}\|((\Dl b_{\mathrm{in}})_{\neq}, (\Dl u_\mathrm{in})_{\neq})\|_{L^2},
\ee
and
\be\label{1.15}
\| u^3_\neq(t)\|_{L^2}\le C_1e^{-\fr{\mu^\fr13t}{44}}\|((\Dl b_{\mathrm{in}})_{\neq}, (\Dl u_\mathrm{in})_{\neq})\|_{L^2}.
\ee
\end{thm}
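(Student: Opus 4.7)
The plan is to work in Fourier space after unwinding the Couette transport. For each mode with $k\neq0$, pass to the moving frame $\tl f(t,X,y,z)=f(t,X+ty,y,z)$ (which kills the $y\pr_x$ drift) and then Fourier-transform in $(X,y,z)$ to conjugate variables $(k,\eta,l)$. The Laplacian becomes multiplication by $-L(t):=-(k^2+(\eta-kt)^2+l^2)$ and $\nb$ becomes $i(k,\eta-kt,l)$, so \eqref{Li} reduces to a family of $4\times4$ linear ODEs for $(\hat b,\hat u^1,\hat u^2,\hat u^3)(t;k,\eta,l)$. I would look for a Lyapunov functional $\mathcal E(t;k,\eta,l)$, comparable to a suitably weighted sum of $|\hat b|^2$ and $|\hat u|^2$, satisfying $\mathcal E(t)\le e^{-c\mu^{1/3}t}\mathcal E(0)$; Plancherel then delivers \eqref{bxz}--\eqref{1.15}, with the $\Dl$ on the right-hand side absorbing the two powers of $L(0)$ that will appear in the weights.

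Structurally, I would first Helmholtz-split the Fourier velocity into its compressible direction $(k,\eta-kt,l)/\sqrt{L(t)}$ and the two incompressible components, parametrized by vorticity-type quantities such as $\hat q_1=l\hat u^1-k\hat u^3$ and $\hat q_2=(\eta-kt)\hat u^1-k\hat u^2$. These satisfy transport-diffusion equations of the form $\pr_t\hat q+\mu L(t)\hat q=$ (small coupling from $\hat u^2$), and the standard hypocoercive argument for $\pr_t-\mu\Dl+y\pr_x$ on $k\neq0$ yields the $\mu^{1/3}$ decay for them. Second, the compressible pair $(\hat b,\hat d)$ with $\hat d=i(k\hat u^1+(\eta-kt)\hat u^2+l\hat u^3)$ satisfies a damped acoustic system schematically of the form
\[
\pr_t\hat b+\hat d=0,\qquad \pr_t\hat d-\fr{L(t)}{\eps^2}\hat b+(\lm+2\mu)L(t)\hat d=\text{(lift-up coupling)},
\]
and I would build a cross-term energy $\mathcal E_c=A|\hat b|^2+B|\hat d|^2/L(t)+C\,\mathrm{Re}(\hat b\,\overline{\hat d}/L(t))$ whose mixed term activates an effective friction on $\hat b$ of strength $\sim 1/(\eps^2L(t))$. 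Finally, a small weighted copy of $|\hat u^2|^2$ is added to tame the lift-up term $(u^2,0,0)^\top$ appearing in the $\hat u^1$-equation; assembling the three pieces into a single coercive functional and applying Gr\"onwall (then Plancherel) produces \eqref{bxz}--\eqref{1.15}. The prefactors $\mu^{-\fr16}$ on $b$ and $\mu^{-\fr13}$ on $u^2$ precisely mirror the relative weights required in $\mathcal E$, while the slower rate $\mu^{1/3}t/88$ for $u^1,u^2$ versus $\mu^{1/3}t/44$ for $b,u^3$ reflects that $u^1,u^2$ inherit their decay through a secondary bootstrap off the better-behaved $b$ and $u^3$.

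The main obstacle is that compressibility does not permit a clean diagonalization: the pressure coupling $\eps^{-2}\nb b$, the anisotropic viscous stress $(\lm+\mu)\nb\dv u$, and the Couette lift-up act on the same Fourier mode, while the non-monotone time dependence of $L(t)$ forces the weights $A,B,C$ (and the analogous weight on $\hat u^2$) to be chosen time-dependently. They must be tuned so that simultaneously (a) the acoustic oscillation at frequency $\sqrt{L(t)}/\eps$ is beaten by the viscous dissipation, which is the role of $\mu^\fr13\eps\le\fr14$; (b) the compressible stress $(\lm+2\mu)L(t)\hat d$ does not destroy the acoustic structure of the cross term, which is secured by $\mu^\fr13(\lm+2\mu)\eps^2\le1$; and (c) the lift-up forcing remains small compared with the diagonal dissipation, which uses $\lm+2\mu\le1$. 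Fitting all three conditions in \eqref{mu} inside a single $\mu^{1/3}$-coercive differential inequality, with the $k\neq0$ hypothesis doing the work of turning the instantaneous coercivity $\mu L(t)$ into the effective rate $\mu^{1/3}$ after time-averaging, is the central technical burden.
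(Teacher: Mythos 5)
Your overall framework is right — pass to the moving frame, go to Fourier variables, split the compressible and incompressible directions, and build a hypocoercive energy with a $\hat b$--$\hat d$ cross term whose coefficient is $\sim\mu^{1/3}$. But several of the ideas that actually make the argument close are missing, and where they are missing the proposed steps would fail.

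First, the "vorticity-type" quantities $\hat q_1=l\hat u^1-k\hat u^3$, $\hat q_2=(\eta-kt)\hat u^1-k\hat u^2$ do \emph{not} satisfy clean transport-diffusion equations with only a small $\hat u^2$ coupling. Taking the curl of $\eqref{Li}_2$ produces, for instance, $\pr_t\om^3+y\pr_x\om^3-\pr_xu^1-\pr_yu^2=\mu\Dl\om^3$, and in the compressible setting $-\pr_xu^1-\pr_yu^2=\pr_zu^3-\dv u$ is a genuine order-one coupling with $\dv u$ that has no sign. The paper instead works with $w=\Dl u-\nb\dv u$, whose equations \eqref{w} contain $+\pr_y\dv u$ (in $w^1$) and $-\pr_x\dv u$ (in $w^2$), and then uses the transport equation for $b$ to introduce the good unknowns ${\bf w}^1=w^1-\pr_yb$ and ${\bf w}^2=w^2+\pr_xb$, which cancel those couplings exactly (see \eqref{Lx}, \eqref{Ly}). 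This is the key structural step your proposal omits, and without something equivalent the "single coercive functional" will not close: the divergence forcings you would need to absorb are not small. Relatedly, you never specify the Fourier multiplier that balances the $p'/p$ stretching; "tune $A,B,C$ time-dependently" is not a substitute for the explicit weight $m$ of Definition \ref{def-m} (together with $m_1,m_2,m_3$), whose precise pointwise bounds $1\le m\lesssim\mu^{-2/3}$ and $m\le p/(k^2+l^2)$ are exactly what produce the stated prefactors $\mu^{-1/6}$ and $\mu^{-1/3}$ and the relation between $\Dl_L$ and $\Dl$ needed to invoke Plancherel against $(\Dl b_{\mathrm{in}},\Dl u_{\mathrm{in}})_\neq$.

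Second, your treatment of $u^1_\neq$ will not give \eqref{prxxu1}. Adding a weighted copy of $|\hat u^2|^2$ addresses the lift-up in the \emph{zero mode} $u^1_0$; for the non-zero mode, the paper gets a loss-free bound by exploiting that $w=\Dl u-\nb\dv u$ is divergence free, so $\pr_{xx}u^1$ can be expressed through $W^2,W^3,D^1,B^1$ without ever invoking $W^1$ (see the computation leading to \eqref{u1xx} and the refined $\tl{M}$-weighted estimate for $W^2$). If one instead routes the $u^1_\neq$ estimate through the $({\bf b}^2,{\bf d}^2,{\bf w}^1)$ sub-system \eqref{Ly}, as your plan effectively would, the result inherits the $\mu^{-1/3}$ factor that appears in \eqref{enhan2}, contradicting \eqref{prxxu1}. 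Finally, note that the energy the paper actually propagates is at the level of $\nb\nb_{x,z}b/\eps$, $\nb_{x,z}\dv u$, $\Dl u-\nb\dv u$ (second derivatives of $u$), not $|\hat u|^2$; this is consistent with the $\Dl b_{\mathrm{in}},\Dl u_{\mathrm{in}}$ on the right of \eqref{bxz}--\eqref{1.15}, and your plan of "absorbing two powers of $L(0)$ into the weights" would have to be reconciled with the fact that $L(t)$ and $L(0)$ are not uniformly comparable in $t$ — precisely the comparison the multiplier $m$ is designed to control.
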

\begin{rem}\label{rem0}
The uniform bounds  in Theorem \ref{coro1} are obtained via higher order estimates involving good derivatives. Indeed, \eqref{bxz} also holds for $\nb _{x,z}b_\neq$, \eqref{prxxu1}  holds for $\pr_{xx}u^1$ as well, and \eqref{1.14}, \eqref{1.15} still hold with $u^2_\neq$ and $u^3_\neq$ replaced by $\Dl_{x,z}u^2_\neq$ and $\Dl_{x, z}u^3_\neq$, respectively. In particular, we would like to emphasize that the incompressibility of the quantity $\Dl u-\nb \dv u$ and  the good structure possessed by the equation of the unknown $\Dl u^2-\pr_y \dv u+\pr_xb$ (see $\eqref{Lx}_3$) play important roles in the estimate of $u^1_\neq$.
\end{rem}
The following theorem gives the enhanced dissipation of the second derivatives of  the projection of the solution to  \eqref{Li} onto non-zero frequencies in $x$. The estimates for  $\Dl u_\neq$ are given in terms of the incompressible part $\Dl u_\neq-\nb\dv u_\neq$ and the compressible part $\nb\dv u_\neq$.
\begin{thm}\label{thm-endis}
Under the condition of Theorem \ref{coro1}, there exists a constant $C_2$, depending on $\eps$, but independent of $\lm$ and $\mu$, such that
\beq\label{enhan-13}
\nn&&\mu\left(\left\|\frac{\nb \nb_{x,z}b_\neq(t)}{\eps}\right\|^2_{L^2}+\|\nb_{x,z}\dv u_\neq(t)\|^2_{L^2}+\left\|\left(\Dl u^2_\neq-\pr_y\dv u_\neq\right)(t)\right\|^2_{L^2}\right)\\
\nn&&+\mu^{\fr{4}{3}}\left\|\left(\Dl u^3_\neq-\pr_z\dv u_\neq\right)(t)\right\|^2_{L^2}\\
&\le&C_2 e^{-\fr{\mu^\fr13t}{22}}\left(\left\|\fr{(\nb{\nb_{x,z} b_{\mathrm{in}}})_{\neq}}{\eps}\right\|^2_{L^2}+\left\|(\nb \dv u_{\mathrm{in}})_{\neq}\right\|^2_{L^2}+\|(\Dl u^2_{\mathrm{in}})_\neq\|^2_{L^2}+\|(\Dl u^3_{\mathrm{in}})_\neq\|^2_{L^2}\right),
\eeq
and
\beq\label{enhan-2}
\nn&&\mu\left(\left\|\frac{\nb \pr_yb_\neq(t)}{\eps}\right\|^2_{L^2}+\|\pr_y\dv u_\neq(t)\|^2_{L^2}\right)+\mu^{\fr43}\left\|\left(\Dl u^1_\neq-\pr_x\dv u\right)(t)\right\|^2_{L^2}\\
\nn&\le&C_2 e^{-\fr{\mu^\fr13t}{44}}\Bigg\{\mu^{-\fr23}\Bigg(\left\|\fr{(\nb{\pr_{x} b_{\mathrm{in}}})_{\neq}}{\eps}\right\|^2_{L^2}+\left\|(\nb_{x,y} \dv u_{\mathrm{in}})_{\neq}\right\|^2_{L^2}+\|(\Dl u^2_{\mathrm{in}})_\neq\|^2_{L^2}\Bigg)\\
&&+\left\|\fr{(\nb{\pr_{y} b_{\mathrm{in}}})_{\neq}}{\eps}\right\|^2_{L^2}+\|(\Dl u^1_{\mathrm{in}})_\neq\|^2_{L^2}\Bigg\}.
\eeq
\end{thm}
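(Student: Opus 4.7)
The plan is to derive these second-derivative enhanced dissipation estimates directly from the reformulated linear system $\eqref{Lx}$ which isolates the good unknowns: the density $b$, the velocity divergence $\dv u$, and the incompressible combinations $\Dl u^j - \pr_j \dv u$ for $j=1,2,3$. This decomposition separates the acoustic (compressible) dynamics from the shear (incompressible) dynamics as much as possible, leaving only unavoidable couplings to be treated via a hypocoercive energy method.

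To obtain \eqref{enhan-13}, I would first apply the tangential gradient $\nb_{x,z}$ to the equations of $\eqref{Lx}$. Since $\nb_{x,z}$ commutes with the transport operator $\pr_t+y\pr_x$, the quantities $\nb\nb_{x,z}b_\neq/\eps$, $\nb_{x,z}\dv u_\neq$, $\Dl u^2_\neq-\pr_y\dv u_\neq$ and $\Dl u^3_\neq-\pr_z\dv u_\neq$ satisfy a system whose structure mirrors that analyzed in the proof of Theorem \ref{coro1}. I would then build a hypocoercive energy functional summing the squared $L^2$ norms of these four quantities weighted respectively by $\mu,\mu,\mu,\mu^{4/3}$, and augmented by cross terms --- mixed $L^2$ inner products of the form $\la \nb_{x,z}\dv u_\neq,\nb_{x,z}b_\neq/\eps^2\ra$ and $\la \Dl u^j_\neq-\pr_j\dv u_\neq,\pr_j\dv u_\neq\ra$ --- chosen so that the time derivative of the functional contains a dissipation lower bound of the form $c\mu^{1/3}E(t)$. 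A Gronwall argument then yields the decay rate $e^{-\mu^{1/3}t/22}$.

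For \eqref{enhan-2}, applying $\pr_y$ to $\eqref{Lx}$ is more delicate because $[\pr_y,y\pr_x]=\pr_x$ produces a commutator source. I would therefore (i) differentiate $\eqref{Lx}$ in $y$; (ii) place the commutator $\pr_x$-terms on the right as inhomogeneous sources controlled by Theorem \ref{coro1} and by the $\nb_{x,z}$-estimate \eqref{enhan-13} already proved --- this is precisely the origin of the additional $\mu^{-2/3}$-weighted $\pr_x$-data piece and the slower rate $\mu^{1/3}/44$ on the right-hand side of \eqref{enhan-2}; and (iii) construct an analogous hypocoercive functional for $\pr_y b_\neq/\eps$, $\pr_y\dv u_\neq$ and $\Dl u^1_\neq-\pr_x\dv u_\neq$, exploiting the favorable structure of the equation for $\Dl u^2-\pr_y\dv u+\pr_x b$ emphasized in Remark \ref{rem0} to close the coupling between $\pr_y b$ and $\pr_y\dv u$ (that coupling being the only obstruction to a direct application of the argument used for \eqref{enhan-13}).

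The main obstacle, as I see it, is choosing the cross-term weights (in both $\mu$ and $\eps$) so that the hypocoercive dissipation simultaneously dominates the commutator $\pr_x$-source, the acoustic--shear coupling, and the indefinite pressure terms, uniformly in $\eps$ under the hypotheses \eqref{mu}. The disparity between the $\mu$ prefactor for $u^2$ and the $\mu^{4/3}$ prefactor for $u^3$ in \eqref{enhan-13} (and correspondingly for $u^1$ in \eqref{enhan-2}) is forced by this balance, reflecting the anisotropy induced by the Couette shear, which privileges the wall-normal direction through the good unknown $\Dl u^2-\pr_y\dv u+\pr_x b$. Once the differential inequality $\fr{d}{dt}E+c\mu^{1/3}E\le\text{controlled sources}$ is established, the stated decay estimates follow by Gronwall's lemma.
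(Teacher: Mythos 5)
Your high-level plan---a hypocoercive energy with a Matsumura--Nishida type cross term between the density and the divergence, plus treating the $\pr_y$-commutator $[\pr_y,y\pr_x]=\pr_x$ as an inhomogeneous source controlled by the previously obtained tangential estimates---is aligned in spirit with the paper's strategy. However there is a genuine gap: you have no mechanism to actually produce the uniform enhanced dissipation rate $\mu^{1/3}$. The inequality $\frac{d}{dt}E+c\mu^{1/3}E\le\text{sources}$ that you invoke cannot be closed by a physical-space hypocoercive energy estimate alone. In the moving frame the viscous dissipation at Fourier wavenumber $(k,\eta,l)$ is $\mu\big(k^2+(\eta-kt)^2+l^2\big)$, which degenerates to $O(\mu)$ near the critical times $t\approx\eta/k$ and therefore cannot be bounded below by $\mu^{1/3}$ uniformly in time; moreover, the linear stretching term $\pm p'/p$ (where $p=k^2+(\eta-kt)^2+l^2$) induces transient growth that a bare energy estimate cannot absorb. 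The paper's proof of these estimates (via Proposition \ref{prop-endis}) is entirely a Fourier-multiplier argument in the sheared variables $(X,Y,Z)=(x-ty,y,z)$: the multiplier $m$ of Definition \ref{def-m} compensates the amplification caused by $p'/p$, the multiplier $m_2$ of Definition \ref{def-adm} repairs the near-critical-time degeneracy via $\partial_t m_2/m_2+\mu p\ge\tfrac12\mu^{1/3}$ (inequality \eqref{claim}), and $m_1,m_3$ absorb the linear coupling between $D^i$ and $W^j$. Your proposal does not mention the change of coordinates or any of these multipliers, and without them the claimed Gronwall inequality fails.

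Two additional misreadings that feed into the gap: first, you propose to apply $\nb_{x,z}$ to the system \eqref{Lx}, but \eqref{Lx} is already the system for the $\pr_x$-differentiated good unknowns $({\bf b}^1,{\bf d}^1,{\bf w}^2)=(\pr_xb,\pr_x\dv u,w^2+\pr_xb)$; the $\pr_z$- and $\pr_y$-derivative systems are \eqref{Lz} and \eqref{Ly} respectively, with the commutator already cancelled by the good-unknown choices ${\bf w}^2=w^2+\pr_xb$ and ${\bf w}^1=w^1-\pr_yb$. Applying $\nb_{x,z}$ to \eqref{Lx} would give third-order, not the second-order quantities in the theorem. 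Second, the disparity between the $\mu$ and $\mu^{4/3}$ prefactors in \eqref{enhan-13} does not come from a direct choice of weights in a physical-space functional; it comes from the different powers $m^{-3/4}$ versus $m^{-1}$ carried by $W^2$ and $W^3$ in \eqref{enhan13}, converted to $L^2$ via the bound $m\lesssim\mu^{-2/3}$ in \eqref{m-1} (this is literally all the paper's Section 5 proof of Theorem \ref{thm-endis} does, once Proposition \ref{prop-endis} is in hand). The distinction in multiplier powers traces back to the fact that the equation for $W^2$ (system \eqref{Lx'}) has no $\pr_{XY}^L\Dl_L^{-1}$ self-stretching term, while $W^3$ (system \eqref{Lz'}) does, so $W^3$ requires the full weight $m^{-1}$.
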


Our last theorem is about the uniform bounds, decay estimates and the lift-up phenomenon of the projection of the solution to \eqref{Li} onto zero frequencies in $x$.
\begin{thm}\label{thm-0}
Assume that the Mach number $\eps$ and the viscous coefficients $\mu$ and $\lm$ satisfy \eqref{visco} and
\be\label{re-mu}
\lm+2\mu\le1,\quad(\lm+2\mu)\eps\le1, \quad\mathrm{and}\quad \mu(\lm+\mu)\eps^2\le1,
\ee
and the initial data $(b_{\mathrm{in}}, u_{\mathrm{in}})\in H^2(\mathbb{T}\times\mathbb{R}\times\mathbb{T})$. Then there exists a constant $C_3$, independent of $\eps, \lm$ and $\mu$, such that
\beq\label{en-0}
\nn&&\sum_{0\le|\al|\le1}\left\{\left\|\fr{\pr^{\al}b_0}{\eps}\right\|^2_{L^\infty H^1}+\|\pr^{\al}\tl{u}_0\|^2_{L^\infty H^1}+\mu\left(\left\|\fr{\nb\pr^\al b_0}{\eps}\right\|^2_{L^2L^2}+\|\nb\pr^\al\tl{u}_0\|_{L^2H^1}^2\right)\right\}\\
&\le&C_3\sum_{0\le|\al|\le1}\left\|\left(\fr{\pr^{\al}(b_{\mathrm{in}})_0}{\eps}, \pr^{\al}(\tl{u}_{\mathrm{in}})_0\right)\right\|^2_{H^1}. 
\eeq
If, in addition, the projection of the initial data onto the zero frequencies in $x$ and $z$ satisfy $((b_{\mathrm{in}})_{00}, ({u}_{\mathrm{in}})_{00})\in  L^1(\mathbb{R})$, then we have
\beq\label{decay0}
\nn&&\left\|\fr{\pr^{\al}b_0(t)}{\eps}\right\|^2_{L^2}+\|\pr^{\al}\tl{u}_0(t)\|^2_{L^2}\\
&\le&\begin{cases} 
C_3e^{-\fr13\mu t}\left\|\left(\fr{\pr^{\al}(b_\mathrm{in})_0}{\eps},\pr^{\al}(\tl{u}_\mathrm{in})_0\right)\right\|^2_{L^2}, \ \ \mathrm{if} \ \ \al_2\neq0,\\[3mm]
\fr{C_3}{\big(\mu\la t\ra\big)^{|\al_1|+\fr12}}\left(\left\|\left(\fr{\pr^{\al}(b_\mathrm{in})_0}{\eps},\pr^{\al}(\tl{u}_\mathrm{in})_0\right)\right\|^2_{L^2}+\left\|\left(\fr{\pr^{\al}(b_\mathrm{in})_{00}}{\eps},\pr^{\al}(\tl{u}_\mathrm{in})_{00}\right)\right\|^2_{L^1(\mathbb{R})}\right), \ \ \mathrm{if} \ \ \al_2=0,
\end{cases}
\eeq
and 
\beq\label{lift-up}
\nn \|u^1_0(t)\|_{L^2}+\mu^\fr12\|\nb u^1_0\|_{L^2L^2}&\leq& \|(u^1_{\mathrm{in}})_0\|_{L^2}+C_3\mu^{-1}\left(\left\|\fr{P_{l\neq0}(b_{\mathrm{in}})_0}{\eps}\right\|_{L^2}+\|P_{l\neq0}(\tl{u}_{\mathrm{in}})_0\|_{L^2}\right)\\
&&+C_3\mu^{-\fr14}\la t\ra^{\fr34}\left\|\left(\fr{({b}_{\mathrm{in}})_{00}}{\eps},({u}^2_{\mathrm{in}})_{00}\right)\right\|_{L^2\cap L^1}.
\eeq
\end{thm}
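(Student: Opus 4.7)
The plan is to project \eqref{Li} onto zero $x$-frequency, which annihilates the Couette transport $y\pr_x$ and decouples the dynamics into (i) a linearized compressible Navier--Stokes system for $(b_0,\tl u_0)$ on $\mathbb R\times\mathbb T$,
\beno
\pr_t b_0+\mathrm{div}\,\tl u_0=0,\qquad \pr_t\tl u_0+\nb_{y,z}b_0/\eps^2=\mu\Dl\tl u_0+(\lm+\mu)\nb_{y,z}\mathrm{div}\,\tl u_0,
\eeno
and (ii) the scalar lift-up equation $\pr_tu^1_0-\mu\Dl u^1_0=-u^2_0$. The two subsystems can then be handled separately.

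For the uniform bound \eqref{en-0} I would perform the symmetrized energy estimate by testing the continuity equation against $b_0/\eps^2$ and the momentum equation against $\tl u_0$; the pressure terms cancel by integration by parts, giving $L^2$ control of $(b_0/\eps,\tl u_0)$ together with dissipation $\mu\|\nb\tl u_0\|_{L^2}^2+(\lm+\mu)\|\mathrm{div}\,\tl u_0\|_{L^2}^2$. To recover the missing $\mu\|\nb b_0/\eps\|_{L^2}^2$ dissipation I would run the Hoff-type cross-term: test the momentum equation against $\nb b_0/\eps^2$ and substitute $\pr_tb_0=-\mathrm{div}\,\tl u_0$ in the arising time derivative. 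Adding this with a small weight to the basic energy absorbs the cross errors via the smallness conditions \eqref{re-mu}. Applying $\pr^\al$ for $|\al|\le1$ and summing then produces \eqref{en-0}.

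For the decay \eqref{decay0} I would split each unknown as $f_0=P_{l\ne0}f_0+f_{00}$. Differentiating the continuity equation in $t$ and substituting the momentum equation yields the damped wave
\beno
\pr_{tt}b_0-(\lm+2\mu)\Dl\pr_tb_0-\Dl b_0/\eps^2=0,
\eeno
while the mode $u^3_{00}$ decouples into a pure 1D heat equation on $\mathbb R_y$. On the $P_{l\ne0}$ sector the spectral gap $l^2\ge1$ from $\mathbb T_z$ promotes the symbol decay $e^{-c\mu\xi^2t}$ to the uniform exponential bound $e^{-c\mu t}$. On the $f_{00}$ sector, Fourier analysis on $\mathbb R_y$ yields the 1D heat $L^1\to L^2$ estimate $\|f_{00}(t)\|_{L^2}\les(\mu\la t\ra)^{-1/4}\|f_{00,\mathrm{in}}\|_{L^1}$, with each additional $y$-derivative contributing a factor $(\mu\la t\ra)^{-1/2}$; combined, this gives the polynomial case $\al_2=0$ of \eqref{decay0}. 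When $\al_2\ne0$, the extra $\pr_y$ pairs with the space-time dissipation from the energy step to absorb the low-frequency tail and upgrade to the exponential bound.

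For the lift-up \eqref{lift-up}, Duhamel's formula gives
\beno
u^1_0(t)=e^{\mu t\Dl}(u^1_\mathrm{in})_0-\int_0^t e^{\mu(t-s)\Dl}u^2_0(s)\,ds,
\eeno
so $\|u^1_0(t)\|_{L^2}\le\|(u^1_\mathrm{in})_0\|_{L^2}+\int_0^t\|u^2_0(s)\|_{L^2}\,ds$. The $P_{l\ne0}u^2_0$ piece decays exponentially by \eqref{decay0}, contributing the $\mu^{-1}$ term after time integration, while $u^2_{00}$ decays only as $(\mu\la s\ra)^{-1/4}$ from the $L^1\cap L^2$ data; its time integral produces the algebraic growth $\mu^{-1/4}\la t\ra^{3/4}$, which is the compressible lift-up signature (genuinely stronger than in the incompressible case, where the analogue of $u^2_0$ decays exponentially). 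The dissipation bound $\mu^{1/2}\|\nb u^1_0\|_{L^2L^2}$ follows from the basic $L^2$ energy estimate for the lift-up equation, bounding the source by $\|u^2_0\|_{L^2}\|u^1_0\|_{L^2}$ and integrating in time.

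The main obstacle will be the sharp 1D Fourier analysis of the damped wave system on $\mathbb R_y$ that governs $(b_{00},u^2_{00})$: it simultaneously drives the $L^1\to L^2$ polynomial decay in \eqref{decay0} and, through time integration, the nontrivial $\la t\ra^{3/4}$ lift-up growth in \eqref{lift-up}, and must be carried out carefully enough to separate the exponentially decaying $P_{l\ne0}$ contribution from the algebraically decaying $f_{00}$ tail throughout.
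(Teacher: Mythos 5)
Your proposal follows essentially the same route as the paper: the $x$-average decouples $(b_0,\tl u_0)$ from the lift-up equation for $u^1_0$; the uniform bound \eqref{en-0} comes from the symmetrized energy plus the Matsumura--Nishida/Hoff cross-term $\la\pr^\al\tl u_0,\nb\pr^\al b_0\ra$ absorbed via \eqref{re-mu}; the decay \eqref{decay0} splits into the $P_{l\neq0}$ sector, where the spectral gap $l^2\ge1$ and a $\mu$-weighted cross-term give exponential decay, and the $l=0$ sector, where the 1D compressible system on $\mathbb R_y$ yields the heat-type $L^1\to L^2$ rate $(\mu\la t\ra)^{-(|\al_1|+1/2)}$; and \eqref{lift-up} follows from Duhamel with the exponential and polynomial decays of the two sectors of $u^2_0$. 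The only notable technical variation is that you recast the $(b_{00},u^2_{00})$ analysis as a damped second-order wave equation and handle the $\mu^{1/2}\|\nb u^1_0\|_{L^2L^2}$ term by an energy estimate rather than the paper's Duhamel--Minkowski computation, but both are routine and lead to the same estimates. (Minor slip: in the $\al_2\neq0$ case it is the extra $\pr_z$ derivative, not $\pr_y$, that places you in the $P_{l\neq0}$ sector and triggers the exponential rate.)
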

Several remarks are in order.
\begin{rem}
Our restrictions on the Mach number $\eps$, and the viscoelastic coefficient $\lm$ and $\mu$ are weaker than those in the result \cite{ADM21} for the 2D case. As a matter of fact, combining \eqref{mu} and  \eqref{re-mu} , the restrictions on $\eps, \lm$ and $\mu$ in our results can be summarized as follows
\be\label{lmmu}
\lm+2\mu\le1,\quad\eps\max\left\{\mu^\fr13, (\lm+2\mu)\right\}\le\fr14.
\ee
In \cite{ADM21}, the power on the factor $\lm+2\mu$ (the corresponding quantity should be $\lm+\mu$ in the 2D case) in the second inequality of \eqref{lmmu} is $\fr12$. Moreover, in \cite{ADM21}, a good unknown $\Xi-\nu M^2A$ (see (4.4) in \cite{ADM21}) was introduced to overcome the difficulty caused by the failure of conservation of $\Xi$. Similar circumstances will happen on ${\bf w}^1$ and ${\bf w}^2$ in our case, see the systems \eqref{Lx} and \eqref{Ly}. However, we do not resort to extra auxiliary variables, since the dissipations for ${\bf b}^2$ and ${\bf b}^1$ in \eqref{Lx} and \eqref{Ly} are available, respectively. We believe our strategy is applicable to the 2D isentropic compressible Navier-Stokes equations.
\end{rem}

\begin{rem}
Compared with the 2D result \cite{ADM21} obtained by Antonelli, Dolce and Marcati, there is no loss of derivative in the decay estimate \eqref{decay0} for $(b_0, \tl{u}_0)$. As for $u_0^1$, the energy inequality \eqref{lift-up} reveals the lift-up phenomenon. For the 3D incompressible Navier-Stokes equations, the last term in \eqref{lift-up} can be neglected due to the incompressible condition, see \cite{BGM17} for example. We would like to point out that, different from the incompressible fluids, the lift-up phenomenon also happens in the 2D compressible Navier-Stokes equations around the Couette flow. Indeed, similar estimate to \eqref{lift-up} holds for the 2D case if one ignores the $z$ variable.
\end{rem}

\begin{rem}
If the domain $\mathbb{T}\times\mathbb{R}\times\mathbb{T}$ is replaced by $\mathbb{T}\times\mathbb{R}^{2}$, our proofs are still valid. In particular, the estimates for the zero mode $(b_0, u_0)$ can be improved, since  in that case $(b_0, \tl{u}_0)$ decays as fast as the solution to the  heat equation on $\mathbb{R}^2$. More precisely, \eqref{decay0} and \eqref{lift-up} can be replaced by
\beno
\left\|\fr{\pr^{\al}b_0(t)}{\eps}\right\|^2_{L^2}+\|\pr^{\al}\tl{u}_0(t)\|^2_{L^2}\le\fr{C}{\big(\mu\la t\ra\big)^{|\al|+1}}\left\|\left(\fr{\pr^{\al}(b_\mathrm{in})_0}{\eps},\pr^{\al}(\tl{u}_\mathrm{in})_0\right)\right\|^2_{L^2\cap{L^1}},
\eeno
and 
\beqno
\nn \|u^1_0(t)\|_{L^2}+\mu^\fr12\|\nb u^1_0\|_{L^2L^2}&\leq& \|(u^1_{\mathrm{in}})_0\|_{L^2}+C\mu^{-\fr12}\log\la t\ra\left\|\left(\fr{({b}_{\mathrm{in}})_{0}}{\eps},({u}^2_{\mathrm{in}})_{0}\right)\right\|_{L^2\cap L^1},
\eeqno
respectively, for some positive constant $C$ independent of $\eps, \lm$ and $\mu$. Furthermore, we would like to remark that these improvements on the estimates for the zero mode may be helpful to solve the nonlinear stability problem. 
\end{rem}

\begin{rem}
The dependence of the Mach number $\eps$ of $C_1$ and $C_2$ mainly stems from the definitions of $m_1$ and $m_3$, where a constant $N$ depending on $\eps$  is involved. In this paper, it suffices to choose $N=O(\eps^2)$. See \eqref{up-m13} and \eqref{cN} for more details.
\end{rem}

\begin{rem}
The results in Theorems \ref{thm-0} and \ref{thm-endis} can be regard as preparations for the nonlinear problem. Note that the Sobolev embedding $H^2\hookrightarrow L^\infty$ ensures that the perturbed density $b$ and velocity $u$ are bounded, which will be useful for the nonlinear estimates.  This partially explains why the initial data $(b_{\mathrm{in}}, u_{\mathrm{in}})$ are assumed to lie in $H^2$ in this paper.  We will study the nonlinear transition threshold problem in our future work.
\end{rem}

\begin{rem}
We believe that our approach can be used to deal with more general case, for example, the non-isentropic flow that the temperature is taken into account.
\end{rem}
\noindent{\bf Notations}

\begin{enumerate}
\item Throughout the paper, we denote by $C$ various ``harmless'' positive constants  independent of the viscous coefficients $\mu, \lm,$ and the time $t$. Sometimes we use the notation $A \lesssim B$ as an equivalent to $A \le CB$. We would like to point out that $C$ may be different from line to line.

\item The Fourier transform $\hat{f}(k,\eta, l)$ of a function $f(x,y,z)$ is defined by
\beno
\hat{f}(k,\eta):=\fr{1}{(2\pi)^2}\int_\mathbb{T}\int_{\mathbb{R}}\int_\mathbb{T}f(x,y, z)e^{-i(kx+\eta y+lz)}dxdydz.
\eeno
Then 
\[
f(x,y,z)=\sum_{(k,l)\in\mathbb{Z}^2}\int_\mathbb{R}\hat{f}(k,\eta, l)e^{i\eta y}d\eta e^{i(kx+lz)}.
\]
\item The Sobolev space $H^s(\mathbb{T}\times\mathbb{R}\times\mathbb{T})$, $s\ge0$ is defined by
\beno
H^s(\mathbb{T}\times\mathbb{R}\times\mathbb{T}):=\left\{f\in L^2(\mathbb{T}\times\mathbb{R}\times\mathbb{T}):  \la D\ra^sf\in L^2(\mathbb{T}\times\mathbb{R}\times\mathbb{T})\right\},
\eeno
where
\[
\widehat{\la D\ra^sf}(k, \eta,l)=\left(1+(k^2+\eta^2+l^2)\right)^\fr{s}{2}\hat{f}(k,\eta,l).
\]

\item  For $1\le p, q\le\infty$, we simply write 
\[
L^p=L^p(\Omega), \quad  H^s=H^s(\Omega),
\]
and
\beno
\|f\|_{L^q H^s}:=\big{\|}\|f\|_{H^s(\Omega)}\big{\|}_{L^q(0,t)},
\eeno
for a function of space and time $f=f(t,\cdot)$, and the domain $\Omega$ may be taken as $\mathbb{T}\times\mathbb{R}\times\mathbb{T}, \mathbb{R}\times\mathbb{T}$, $\mathbb{R}$  or $\mathbb{R}^2$.
\item For a vector $x$, we denote $\la x\ra:=(1+|x|^2)^\fr12$. For $f, g\in L^2$, we use $\la f,g\ra$ to denote the $L^2$ inner product of $f$ and $g$.
\item For $a\in\mathbb{C}$, we use $\mathfrak{Re}\,a$ and $\bar{a}$ to denote the real part and the conjugation of $a$, respectively.
\end{enumerate}

The rest part of this paper is organized as follows. In section 2, we show the main ideas of proof. We will introduce some good unknowns and give a toy model to demonstrate  the ingredients we shall use in what follows.  In section 3, we bound the $x$-average of the solution $(b_0, u_0)$ and show the lift-up effect. The enhanced dissipation effect of the non-zere mode $(b_\neq, u_\neq)$ is shown in section 4, and  section 5 is devoted to the proof of Theorems \ref{coro1}--\ref{thm-0}.

\section{Ingredients of the proof} 
In this section, we show the ingredients of the proof. We shall give a reformulation of the linearized compressible Navier-Stokes equations \eqref{Li}. The basic idea is to decompose $\Dl u$ into the compressible part $\nb\dv u$ and the incompressible part $\Dl u-\nb \dv u$. Several good unknowns will be introduced to utilize the intrinsic cancellation structure. The proof relies on a Fourier multiplier method in the spirit of Bedrossian and Masmoudi \cite{BM15}. A toy model will be given to explain how to capture the dissipation of the perturbed density $b$ and how to design the multipliers to balance the linear stretching.
\subsection{Derivation the equations of the quantity $\Dl u-\nb\dv u$}
For the stability of 3D Couette flow in the incompressible Navier-Stokes equations, an important unknowns is $\Dl u^2$ which was first introduced by Kelvin \cite{Kel87}. Moreover,   it is convenient to work with the set of of unknowns $q^i:=\Dl u^i, i=1, 2, 3$ as observed in \cite{BGM20}. In our case, applying the $\Dl$ operator to the momentum equation in $\eqref{Li}$, and using the fact
\[
\Dl(y\pr_xu^i)=y\pr_x\Dl u^i+2\pr_{xy}u^i,
\]
we find that the equation of  $q$ takes the form of
\be\label{q}
\pr_tq+y\pr_xq+2\pr_{xy}u+(q^2, 0, 0)^\top+\fr{\nb\Dl b}{\eps^2}-\mu\Dl q-(\lm+\mu)\nb\Dl\dv u=0.
\ee
One can not see any evidence from \eqref{q} that $\Dl u^i$ behaves better than $u^i$ itself even for $i=2$.
On the other hand, we infer from $\eqref{Li}_2$  that the equation of $\nb\dv u$ satisfies
\be\label{nbdv}
\pr_t\nb\dv u+y\pr_x\nb\dv u+(0, \pr_x\dv u, 0)^\top+2\pr_x\nb u^2+\fr{\nb\Dl b}{\eps^2}-(\lm+2\mu)\nb\Dl\dv u=0.
\ee
Set
\be\label{w}
w:=q-\nb\dv u=\Dl u-\nb\dv u, 
\ee
then the different between \eqref{q} and \eqref{nbdv} shows that the equation of $w$ takes the form of 
\be\label{w}
\begin{split}
\begin{cases}
\pr_tw^1+y\pr_xw^1+w^2+\pr_y\dv u+2\pr_{xy}u^1-2\pr_{xx}u^2-\mu\Dl w^1=0,\\
\pr_tw^2+y\pr_xw^2-\pr_x\dv u-\mu\Dl w^2=0,\\
\pr_tw^3+y\pr_xw^3+2\pr_{xy}u^3-2\pr_{xz}u^2-\mu\Dl w^3=0.
\end{cases}
\end{split}
\ee
This is noting but the system of the incompressible part of the linearized compressible Navier-stokes equations \eqref{Li}. Owing to the coupling between the compressible and incompressible part, we need further reformulation of system \eqref{Li}.

\subsection{Reformulation of the equations} 
Let us denote
\beno
d:=\dv u.
\eeno
Then from $\eqref{Li}$, $\eqref{w}$, we find that  $(b, d, w^2)$ is not coupled with other components, and the system of $(b, d, w^2)$ satisfies
\begin{eqnarray}\label{L}
\left\{ \begin{array}{ll}
\pr_tb+y\pr_xb+d=0,\\[2mm]
\pr_td+y\pr_xd+2\pr_xu^2+\fr{\Dl b}{\eps^2}-(\lm+2\mu)\Dl d=0,\\[2mm]
\pr_tw^2+y\pr_xw^2-\pr_xd-\mu\Dl w^2=0,
 \end{array}
 \right.
\end{eqnarray}
with
\be\label{u2x}
\pr_xu^2=\pr_x\Dl^{-1}w^2+\pr_{xy}\Dl^{-1}d.
\ee
It's just reminiscent of the 2D linearized isentropic compressible  fluids around the Couette flow $(y, 0)$ in terms of the density, divergence of the velocity and vorticity, see \cite{ADM20, ADM21}. In order to cancel out the coupling between $w^2$ and $d$ in $\eqref{L}_3$, we derive the equation of $\pr_xb$ as follows
\be\label{bx}
\pr_t\pr_xb+y\pr_x\pr_xb+\pr_xd=0,
\ee
and define
\[
{\bf w}^2:=w^2+\pr_xb.
\]
Adding \eqref{bx} to $\eqref{L}_3$, we find that the equation of ${\bf w}^2$ takes the form of
\be
\pr_t{\bf w}^2+y\pr_x{\bf w}^2-\mu\Dl {\bf w}^2+\mu\Dl\pr_xb=0.
\ee
Denote
\[
{\bf b}^1:=\pr_xb,\quad{\rm and}\quad{\bf d}^1:=\pr_xd.
\]
Now we give the system of $({\bf b}^1, {\bf d}^1,{\bf w}^2)$
\begin{eqnarray}\label{Lx}
\left\{ \begin{array}{ll}
\pr_t{\bf b}^1+y\pr_x{\bf b}^1+{\bf d}^1=0,\\[2mm]
\pr_t{\bf d}^1+y\pr_x{\bf d}^1+2\pr_{xy}\Dl^{-1}{\bf d}^1+2\pr_{xx}\Dl^{-1}\left({\bf w}^2-{\bf b}^1\right)+\frac{\Dl {\bf b}^1}{\eps^2}-(\lm+2\mu)\Dl {\bf d}^1=0,\\[2mm]
\pr_t{\bf w}^2+y\pr_x{\bf w}^2-\mu\Dl {\bf w}^2+\mu\Dl{\bf b}^1=0.
 \end{array}
 \right.
\end{eqnarray}

Next, denote
\[
{\bf b}^3:=\pr_zb,\quad{\rm and}\quad{\bf d}^3:=\pr_zd.
\]
Note that
\beno
\pr_{xz}u^2=\pr_{xz}\Dl^{-1}\left({\bf w}^2-{\bf b}^1\right)+\pr_{xy}\Dl^{-1}{\bf d}^3,
\eeno
and
\beno
\pr_{xy}u^3=\pr_{xy}\Dl^{-1}\left(w^3+{\bf d}^3\right).
\eeno
Therefore,
\beno
\pr_{xy}u^3-\pr_{xz}u^2=\pr_{xy}\Dl^{-1}w^3-\pr_{xz}\Dl^{-1}\left({\bf w}^2-{\bf b}^1\right).
\eeno
Then it is not difficult to verify that $({\bf b}^3, {\bf d}^3, w^3)$ satisfies
\begin{eqnarray}\label{Lz}
\left\{ \begin{array}{ll}
\pr_t{\bf b}^3+y\pr_x{\bf b}^3+{\bf d}^3=0,\\[2mm]
\pr_t{\bf d}^3+y\pr_x{\bf d}^3+2\pr_{xy}\Dl^{-1}{\bf d}^3+2\pr_{xz}\Dl^{-1}\left({\bf w}^2-{\bf b}^1\right)+\frac{\Dl {\bf b}^3}{\eps^2}-(\lm+2\mu)\Dl {\bf d}^3=0,\\[2mm]
\pr_tw^3+y\pr_xw^3+2\pr_{xy}\Dl^{-1}w^3-2\pr_{xz}\Dl^{-1}\left({\bf w}^2-{\bf b}^1\right)-\mu\Dl w^3=0.
\end{array}
 \right.
\end{eqnarray}

Now we turn to reformulate the system of $(\pr_y b, \pr_yd, w^1)$. The term $\pr_yd$ in the equation $\eqref{w}_1$ of $w^1$ may cause growth and hence we  try to cancel out this bad term. To this end,  applying $\pr_y$ to $\eqref{L}_1$ yields
\be\label{by}
\pr_t\pr_yb+y\pr_x\pr_yb+\pr_xb+\pr_yd=0.
\ee
Let us define
\[
{\bf w}^1:=w^1-\pr_yb,
\]
and denote
\[
{\bf b}^2:=\pr_yb,\quad{\rm and}\quad{\bf d}^2=\pr_yd,
\]
Taking the difference between $\eqref{w}_1$ and \eqref{by}, we are led to  
\be\label{w1'}
\pr_t{\bf w}^1+y\pr_x{\bf w}^1+w^2-{\bf b}^1+2\pr_{xy}u^1-2\pr_{xx}u^2-\mu\Dl {\bf w}^1-\mu\Dl {\bf b}^2=0.
\ee
Noting that
\[
\pr_{xy}u^1=\pr_{xy}\Dl^{-1}\left({\bf w}^1+{\bf b}^2+{\bf d}^1\right),
\]
and using \eqref{u2x}, we obtain
\be
\pr_{xy}u^1-\pr_{xx}u^2=\pr_{xy}\Dl^{-1}\left({\bf w}^1+{\bf b}^2\right)-\pr_{xx}\Dl^{-1}\left({\bf w}^2-{\bf b}^1\right).
\ee
Then we find that $({\bf b}^2, {\bf d}^2, {\bf w}^1)$ solves
\begin{eqnarray}\label{Ly}
\begin{cases}
\pr_t{\bf b}^2+y\pr_x{\bf b}^2+{{\bf  b}^1}+{\bf d}^2=0,\\[2mm]
\pr_t{\bf d}^2+y\pr_x{\bf d}^2+{{\bf d}^1}+2\pr_{xy}\Dl^{-1}{\bf d}^2+2\pr_{xy}\Dl^{-1}\left({\bf w}^2-{\bf b}^1\right)+\frac{\Dl {\bf b}^2}{\eps^2}-(\lm+2\mu)\Dl {\bf d}^2=0,\\
\pr_t{\bf w}^1+y\pr_x{\bf w}^1-\mu\Dl {\bf w}^1+{{\bf w}^2-2{\bf b}^1}+2\pr_{xy}\Dl^{-1}\left({\bf w}^1+{{\bf b}^2}\right)\\
\quad\quad\quad\quad\quad\quad\quad\quad\quad\quad\quad\quad\quad\ \ \ -2\pr_{xx}\Dl^{-1}\left({\bf w}^2-{\bf b}^1\right)-\mu\Dl {\bf b}^2=0.
\end{cases}
\end{eqnarray}


\subsection{Change of coordinates} 
As mentioned above, we will use the Fourier multiplier method in this paper.  Thereby it is convenient to switch to new variables defined below 
\be\label{ch-co}
X=x-ty,\quad Y=y, \quad Z=z.
\ee
For $i\in\{1,2,3\}$, and $j\in\{1,2\}$, we define functions
\[
\begin{split}
B^i(t, X, Y, Z)=&{\bf b}^i(t, X+tY, Y, Z),\\
D^i(t, X, Y, Z)=&{\bf d}^i(t, X+tY, Y, Z),\\
W^j(t, X, Y, Z)=&{\bf w}^j(t, X+tY, Y, Z),\\
\end{split}
\]
and
\[
W^3(t, X, Y, Z)={ w}^3(t, X+tY, Y, Z).
\]
Then the systems \eqref{Lx}, \eqref{Lz} and \eqref{Ly} can be rewritten as
\begin{eqnarray}\label{Lx'}
\left\{ \begin{array}{ll}
\pr_tB^1+D^1=0,\\[2mm]
\pr_tD^1+2\pr_{XY}^L\Dl^{-1}_LD^1+2\pr_{XX}\Dl^{-1}_L\left(W^2-B^1\right)+\frac{\Dl_L B^1}{\eps^2}-(\lm+2\mu)\Dl_L D^1=0,\\[2mm]
\pr_tW^2-\mu\Dl_L W^2+\mu\Dl_LB^1=0,
 \end{array}
 \right.
\end{eqnarray}

\begin{eqnarray}\label{Lz'}
\left\{ \begin{array}{ll}
\pr_tB^3+D^3=0,\\[2mm]
\pr_tD^3+2\pr_{XY}^L\Dl^{-1}_LD^3+2\pr_{XZ}\Dl^{-1}_L\left(W^2-B^1\right)+\frac{\Dl_L B^3}{\eps^2}-(\lm+2\mu)\Dl_L D^3=0,\\[2mm]
\pr_tW^3+2\pr_{XY}^L\Dl^{-1}W^3-2\pr_{XZ}\Dl^{-1}_L\left(W^2-B^1\right)-\mu\Dl_L W^3=0,
\end{array}
 \right.
\end{eqnarray}
and
\begin{eqnarray}\label{Ly'}
\begin{cases}
\pr_tB^2+D^2=F,\\[2mm]
\pr_tD^2+2\pr_{XY}^L\Dl^{-1}_LD^2+\frac{\Dl_L B^2}{\eps^2}-(\lm+2\mu)\Dl_L D^2=G,\\[2mm]
\pr_tW^1+2\pr_{XY}^L\Dl^{-1}_L\left(W^1+B^2\right)-\mu\Dl_L W^1-\mu\Dl_LB^2=H,
\end{cases}
\end{eqnarray}
with
\be\label{FGH}
\begin{split}
F:=&-B^1,\\
G:=&-D^1-2\pr_{XY}^L\Dl_L^{-1}\left(W^2-B^1\right),\\
H:=&2B^1-W^2+2\pr_{XX}\Dl^{-1}_L\left(W^2-B^1\right).
\end{split}
\ee
Denote
\be
p=k^2+(\eta-kt)^2+l^2, \quad p'=-2k(\eta-kt).
\ee
Then 
\[
2\widehat{\pr_{XY}^L}=-2k(\eta-kt)=p',\quad 2\widehat{\pr_{XY}^L\Dl_L^{-1}}=\fr{2k(\eta-kt)}{p}=-\fr{p'}{p}.
\]
\subsection{The toy model and Fourier multipliers}
The systems \eqref{Lx'}--\eqref{Ly'} lead us to consider the following toy model (taking $\eps=1$ for simplicity):
\be\label{toy}
\begin{cases}
\pr_t\phi+\psi=0,\\
\pr_t\psi+2\pr_{XY}^L\Dl_L^{-1}\psi+\Dl_L\phi-(\lm+2\mu)\Dl_L\psi=0.
\end{cases}
\ee
In Fourier variables \eqref{toy} can be rewritten as
\be\label{toyF}
\begin{cases}
\pr_t\hat{\phi}+\hat{\psi}=0,\\
\pr_t\hat{\psi}-\fr{p'}{p}\hat{\psi}-p\hat{\phi}+(\lm+2\mu)p\hat{\psi}=0.
\end{cases}
\ee
The evolution of $|\hat{\psi}|^2$ reads
\be\label{psi2}
\pr_t\left(\fr{|\hat{\psi}|^2}{2}\right)-\fr{p'}{p}|\hat{\psi}|^2-p\mathfrak{Re}(\hat{\phi}\bar{\hat{\psi}})+(\lm+2\mu)p|\hat{\psi}|^2=0.
\ee
In order to eliminate  the coupling term $-p\mathfrak{Re}(\hat{\phi}\bar{\hat{\psi}})$, which equals to $-\mathfrak{Re}(\sqrt{p}\hat{\phi}\sqrt{p}\bar{\hat{\psi}})$, let us investigate the evolution of $\sqrt{p}\hat{\phi}$ 
\beno
\pr_t\left(\sqrt{p}\hat{\phi}\right)-\fr12\fr{p'}{p}\left(\sqrt{p}\hat{\phi}\right)+\sqrt{p}\hat{\psi}=0.
\eeno
Consequently,
\be\label{phi2}
\pr_t\left(\fr{|\sqrt{p}\hat{\phi}|^2}{2}\right)-\fr12\fr{p'}{p}|\sqrt{p}\hat{\phi}|^2+p\mathfrak{Re}(\hat{\phi}\bar{\hat{\psi}})=0.
\ee
Combining \eqref{psi2} with \eqref{phi2} yields
\be\label{par-dis}
\pr_t\left(\fr{|\hat{\psi}|^2+|\sqrt{p}\hat{\phi}|^2}{2}\right)-\fr{p'}{p}\left(|\hat{\psi}|^2+\fr12|\sqrt{p}\hat{\phi}|^2\right)+(\lm+2\mu)p|\hat{\psi}|^2=0.
\ee
We can see from this equality that $(\sqrt{p}\hat{\phi},\hat{\psi})$ is partially dissipative. Fortunately, the damping effect of $\sqrt{p}\hat{\phi}$   hidden in the system \eqref{toy} can be shown by the classical method which goes back to  Matsmura and Nishida \cite{MN80}. Indeed,  we infer from \eqref{toyF} that the product $\hat{\psi}\bar{\hat{\phi}}$ of $\hat{\psi}$ and $\bar{\hat{\phi}}$ satisfies 
\be\label{dam-phi}
\pr_{t}(\hat{\psi}\bar{\hat{\phi}})-|\sqrt{p}\hat{\phi}|^2+|\hat{\psi}|^2=\fr{p'}{p}\hat{\psi}\bar{\hat{\phi}}-(\lm+2\mu)p\hat{\psi}\bar{\hat{\phi}}.
\ee
A linear combination of \eqref{par-dis} and \eqref{dam-phi} gives
\be\label{full-dis}
\begin{split}
&\pr_t\left(\fr{|\hat{\psi}|^2+|\sqrt{p}\hat{\phi}|^2}{2}-c\hat{\psi}\bar{\hat{\phi}}\right)-\fr{p'}{p}\left(|\hat{\psi}|^2+\fr12|\sqrt{p}\hat{\phi}|^2\right)\\
&\quad\quad\quad+c|\sqrt{p}\hat{\phi}|^2+\left((\lm+2\mu)p-c\right)|\hat{\psi}|^2=\mathrm{error\ \ terms},
\end{split}
\ee
where $c$ is to be determined. Clearly, one can choose $c=\dl\mu$ with $\dl\ll1$ to ensure 
\beno
\left((\lm+2\mu)p-c\right)|\hat{\psi}|^2\gtrsim \mu p|\hat{\psi}|^2,
\eeno
and
\beno
\fr{|\hat{\psi}|^2+|\sqrt{p}\hat{\phi}|^2}{2}-c\hat{\psi}\bar{\hat{\phi}}\approx|\hat{\psi}|^2+|\sqrt{p}\hat{\phi}|^2,
\eeno
for $k\neq0$. However, if $c$ were chosen in this way, it means that we only used the dissipation of $\psi$, but ignored the enhanced dissipation of $\psi$ which is easily to be seen if one just focuses on $\pr_t\psi-(\lm+2\mu)\Dl_L\psi$ in \eqref{toy}, see \cite{BGM17}, for instance. In fact, if the enhanced dissipation of $\psi$ is taken into account,  we can regard \eqref{full-dis} as follows at least fomally
\be\label{full-dis-re}
\begin{split}
&\pr_t\left(\fr{|\hat{\psi}|^2+|\sqrt{p}\hat{\phi}|^2}{2}-c\hat{\psi}\bar{\hat{\phi}}\right)-\fr{p'}{p}\left(|\hat{\psi}|^2+\fr12|\sqrt{p}\hat{\phi}|^2\right)\\
&\quad\quad\quad+c|\sqrt{p}\hat{\phi}|^2+\left(\al(\lm+2\mu)p+\beta\mu^\fr13-c\right)|\hat{\psi}|^2=\mathrm{error\ \ terms},
\end{split}
\ee
with some positive constants $\al$ and $\beta$ satisfying $\al+\beta\le1$. At this stage, it is natural to choose
\beno
c=\dl\mu^\fr13,\quad\mathrm{with}\quad\dl\ll1.
\eeno
As a result, both the enhanced dissipation of $\psi$ and $\sqrt{-\Dl_L}\phi$ are expected to be generated.

In all the analyses above, the bad term $-\fr{p'}{p}\left(|\hat{\psi}|^2+\fr12|\sqrt{p}\hat{\phi}|^2\right)$
is overlooked. Now we turn to deal with this term. First of all, based on the choice of $c$, it is necessary to study the following simplified toy model
\be\label{sim-toy}
\pr_t\hat{f}-\fr{p'}{p}\hat{f}+\mu^\fr13\hat{f}=0.
\ee
This equation can be seen as a competition between the linear stretching term $-\fr{p'}{p}\hat{f}$ and the damping  term $\mu^\fr13\hat{f}$. Motivated by \cite{BGM17}, we introduce a multiplier $m$ to balance the linear stretching if the damping is not dominated. Roughly speaking, if the stretching overcomes the damping, we define the multiplier $m$ in such a way that $m^{-1}f$ solves
\be
\pr_t(m^{-1}\hat{f})+\mu^\fr13(m^{-1}\hat{f})=0.
\ee
To this end, let us first  compare the sizes of the  two factors $\fr{|p'|}{p}$ and $\mu^\fr13$.
Indeed, if $k\neq0$, then
\[
\fr{|k(\eta-kt)|}{k^2+(\eta-kt)^2+l^2}=\fr{\left|t-\fr{\eta}{k}\right|}{1+\left(t-\fr{\eta}{k}\right)^2+\fr{l^2}{k^2}}<\fr{1}{\left|t-\fr{\eta}{k}\right|}\ll \mu^\fr13,
\]
uniformly in $(k, \eta,l)$, as long as
\[
\left|t-\fr{\eta}{k}\right|\gg\mu^{-\fr13}.
\]
This coincides with the comparison  between $\fr{|p'|}{p}$ and $\mu p$ for the 3D incompressible case in \cite{BGM17}. Accordingly, the corresponding  multiplier constructed by Bedrossian, Germain and Masmoudi in \cite{BGM17} still applies in this paper. We will rewrite it in a form for our convenience, see also \cite{ADM21}.
\begin{defn}\label{def-m}
Let us define $m=m(t,k,\eta,l)$ by the exact formulas below:\par
\noindent{\em(i)}\ \ \ if $k=0$: $m(t, 0,\eta,l)=1$;\par
\noindent{\em(ii)} \ if $k\neq0$, $\fr{\eta}{k}\le-64\mu^{-\fr13}$: $m(t,k,\eta,l)=1$;\par
\noindent{\em(iii)} if $k\neq0, -64\mu^{-\fr13}<\fr{\eta}{k}\le0$:\par
{\em (iii.1)}\ \ if $0\le t\le \fr{\eta}{k}+64\mu^{-\fr13}, m(t, k,\eta,l):=\fr{k^2+(\eta-kt)^2+l^2}{k^2+\eta^2+l^2}$,\par
{\em (iii.2)}\ \ if $t>\fr{\eta}{k}+64\mu^{-\fr13}$, $m(t,k,\eta,l):=\fr{k^2+(64\mu^{-\fr13}k)^2+l^2}{k^2+\eta^2+l^2}$;\par
\noindent{\em(iv)} if $k\neq0, \fr{\eta}{k}>0$:\par
{\em(iv.1)}\ \ if $0\le t<\fr{\eta}{k}$, $m(t,k,\eta,l)=1$,\par
{\em(iv.2)}\ \ if $\fr{\eta}{k}\le t\le\fr{\eta}{k}+64\mu^{-\fr13}$, $m(t, k, \eta,l)=\fr{k^2+(\eta-kt)^2+l^2}{k^2+l^2}$,\par
{\em(iv.3)}\ \ if $t>\fr{\eta}{k}+64\mu^{-\fr13}$, $m(t, k, \eta,l)=\fr{k^2+(64\mu^{-\fr13}k)^2+l^2}{k^2+l^2}$.\par
\end{defn}
\begin{rem}\label{rem-m}
For $k\neq0$, the value of the multiplier $m$ can be classified into the following three situations:
\begin{itemize}
\item In case {\em(ii)}, sub-case {\em(iii.2)}, and sub-case {\em(iv.3)}, we have $t-\fr{\eta}{k}\ge64\mu^{-\fr13}$, and thus
\be\label{vis-dom}
\fr{|k(\eta-kt)|}{k^2+(\eta-kt)^2+l^2}< \fr{\mu^\fr13}{64}.
\ee
\item In sub-cases {\em(iii.1)} and {\em(iv.2)}, there holds $k(\eta-kt)\le0$, which means that the linear  term $-\fr{p'}{p}\hat{f}$ in \eqref{sim-toy} corresponds to an amplification of $\hat{f}$. Indeed, in these two sub-cases, $m$ satisfies the ODE
\be\label{def-w}
\fr{\pr_t{m}}{m}=\fr{p'}{p}.
\ee
\item In the sub-case {\em(iv.1)}, it holds that 
\be\label{p'p}
\fr{p'}{p}\le0,
\ee 
and thus  the linear  term $-\fr{p'}{p}\hat{f}$ in \eqref{sim-toy} amounts to a damping effect.
\end{itemize} 
Furthermore,  noting that $m$ is nondecreasing in $t$, we have
\[
1\le m(t,k,\eta,l)\le\fr{k^2+(64\mu^{-\fr13}k)^2+{l^2}}{k^2+{l^2}}.
\]
Accordingly, for $0<\mu\le1$, one easily deduces that
\be\label{m-1}
1\le m(t,k,\eta,l)\les\mu^{-\fr23}.
\ee
Finally, using again the fact that $m(t, k,\eta,l)$ is nondecreasing in $t$, we find that
\be\label{m-2}
m(t,k,\eta,l)\le\fr{k^2+(\eta-kt)^2+l^2}{k^2+l^2}.
\ee
\end{rem}
Several additional multipliers are also needed in this paper. In fact, we will use multipliers $m_1$ and $m_3$ to balance the growth due to the linear coupling between $\dv u$ and $w$.  Similar multipliers can be found in \cite{ADM20, BGM17, Z17}. In \cite{BGM17}, Bedrossian, Germain and Masmoudi constructed a multiplier $M^2$ to compensate for the transient slow-down of the enhanced dissipation near the critical times. The multiplier  $m_2$ below, already used in \cite{ADM21, BVW18}, is  an adaptation of $M^2$ in \cite{BGM17}.
\begin{defn}\label{def-adm}
Assume that $N$ is a positive constant. Let us define $m_i, i=1,2,3$ as follows: if $ k=0$, $m_i(t,0,\eta)=1$ and 
\begin{itemize}
\item if $ k\neq0$,
\be\label{m1}
\pr_tm_1=N\fr{k^2}{p}m_1, \quad m_1|_{t=0}=1,
\ee
\item if $ k\neq0$,
\be\label{m2}
\pr_tm_2=\fr{\mu^\fr13}{1+(\mu^\fr13|t-\eta/k|)^2}m_2, \quad m_2|_{t=0}=1,
\ee
\item if $ k\neq0$,
\be\label{m3}
\pr_tm_3=N\fr{\la kl\ra}{p}m_3, \quad m_3|_{t=0}=1.
\ee
\end{itemize}
\end{defn}
\begin{rem}
The multipliers $m_i, i=1, 2, 3$ are bounded from above and below uniformly in $\mu$ and $(t, k,\eta,l)\in[0,\infty)\times\mathbb{Z}\times\mathbb{R}\times\mathbb{Z}$. In particular, the upper bounds of $m_1$ and $m_3$ depend on the constant $N$. More precisely, direct calculations yield
\be\label{up-m13}
1\le m_i\le e^{N\pi}, \ \  i=1, 3, \quad \mathrm{and} \quad 1\le m_2\le e^{\pi}.
\ee
Moreover, for $(t,k,\eta,l)\in[0,\infty)\times\mathbb{Z}\backslash \{0\}\times\mathbb{R}\times\mathbb{Z}$, the multiplier $m_2$ satisfies
\be\label{claim}
\fr{\pr_tm_2}{m_2}+\mu p\ge\fr12\mu^\fr13.
\ee
In fact, if $|t-\eta/k|\ge\mu^{-\fr13}$, and $k\neq0$
\beno
\mu p=\mu^\fr13 \mu^{\fr23}(k^2+(\eta-kt)^2+l^2)=\mu^\fr13k^2\left(\mu^{\fr23}+\mu^\fr23|t-\eta/k|^2+\fr{l^2}{k^2}\right)>\mu^\fr13.
\eeno
On the other hand, if $|t-\eta/k|\le\mu^{-\fr13}$, and $k\neq0$
\beno
\fr{\pr_tm_2}{m_2}=\fr{\mu^\fr13}{1+(\mu^\fr13|t-\eta/k|)^2}\ge\fr{\mu^\fr13}{2},
\eeno
so the inequality \eqref{claim} is true.
\end{rem}
\section{Estimates of zero mode}
 We begin this section by giving the $x$-average of   the system \eqref{Li}:
\be\label{Li-0}
\begin{cases}
\pr_tb_0+\mathrm{div}\tl{u}_0=0,\\[2mm]
\pr_t\tl{u}_0+
\fr{\nb b_0}{\eps^2}-\mu\Dl \tl{u}_0-(\lm+\mu)\nb\mathrm{div}\tl{u}_0=0,\\[2mm]
\pr_tu_0^1+u^2_0-\mu\Dl u^1_0=0.
\end{cases}
\ee
Moreover, it will also be convenient to consider the projection of the sub-system of $(b_0, \tl{u}_0)$ in \eqref{Li-0} onto the zero frequencies in $z$:
\be\label{pro-z}
\begin{cases}
\pr_t{b}_{00}+\pr_y{u}^2_{00}=0,\\[2mm]
\pr_t{u}^2_{00}+
\fr{\pr_y {b}_{00}}{\eps^2}-(\lm+2\mu)\pr_{yy}{u}^2_{00}=0,\\[2mm]
\pr_t{u}_{00}^3-\mu\pr_{yy} {u}^3_{00}=0,
\end{cases}
\ee
where we have used $(b_{00}, u^2_{00}, u^3_{00})$ to denote $(P_{l=0}b_0, P_{l=0}u^2_0, P_{l=0}u^3_0)$.

The aim of this section is to study the long-time dynamics of $(b_0, u_0)$. One can see from \eqref{Li-0} that $(b_0, \tl{u}_0)$ satisfies a parabolic-hyperbolic system, while $u^1_0$ satisfies a inhomogeneous heat equation. Thus, $(b_0, u_0)$ and $u^1_0$ should be estimated in different way.
\subsection{Estimates of $(b_0, \tl{u}_0)$}
To begin with, we establish the energy estimates of $(b_0, \tl{u}_0)$.
\begin{prop}\label{prop-en0}
Assume that the Mach number $\eps$ and the viscous coefficients $\lm$ and $\mu$ satisfy 
\be\label{mu0}
\max\left\{(\lm+\mu)\mu\eps^2, (\lm+\mu)\eps\right\}\le1.
\ee
Then there exists a positive constant $C$, independent of $\eps, \lm$ and $\mu$, such that for any multi-index $\al=(\al_1, \al_2)$, there holds
\beq\label{en0}
\nn&&\left\|\fr{\pr^{\al}b_0}{\eps}\right\|^2_{H^1}+\|\pr^{\al}\tl{u}_0\|^2_{H^1}+\mu\left(\|\nb\pr^\al\tl{u}_0\|_{L^2H^1}^2+\left\|\fr{\nb\pr^\al b_0}{\eps}\right\|^2_{L^2L^2}\right)\\
&\le&C\left(\left\|\fr{\pr^{\al}(b_{\mathrm{in}})_0}{\eps}\right\|^2_{H^1}+\|\pr^{\al}(\tl{u}_{\mathrm{in}})_0\|^2_{H^1}\right),
\eeq
where $\pr^\al=\pr_y^{\al_1}\pr_z^{\al_2}$.  
\end{prop}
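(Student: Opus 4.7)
The plan is a Matsumura--Nishida style energy method, arranged to track the $\eps$-scaling. Because \eqref{Li-0}$_{1,2}$ is constant coefficient in $(y,z)$, acting by $\pr^\al$ preserves its form, so it is enough to prove the bound at $\al=0$; I will write $(b,V):=(b_0,\tl u_0)$. Testing the continuity equation against $b/\eps^2$ and dotting the momentum equation with $V$ (then integrating by parts in the pressure term), and repeating the same argument after applying $\nb$ to the whole system, produces
\[
\tfrac12\tfrac{d}{dt}\bigl(\|b/\eps\|^2_{L^2}+\|V\|^2_{L^2}\bigr)+\mu\|\nb V\|^2_{L^2}+(\lm+\mu)\|\dv V\|^2_{L^2}=0,
\]
\[
\tfrac12\tfrac{d}{dt}\bigl(\|\nb b/\eps\|^2_{L^2}+\|\nb V\|^2_{L^2}\bigr)+\mu\|\nb^2 V\|^2_{L^2}+(\lm+\mu)\|\nb\dv V\|^2_{L^2}=0.
\]
Adding these two identities already gives the $L^\infty_t$ control of $\|b/\eps\|^2_{H^1}+\|V\|^2_{H^1}$ and the dissipation in $\nb V$, but no dissipation of $b$.

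To recover dissipation of $\nb b$ I would compute the Matsumura--Nishida cross identity. Substituting $\pr_tV=-\nb b/\eps^2+\mu\Dl V+(\lm+\mu)\nb\dv V$ and $\pr_t b=-\dv V$, and integrating by parts,
\[
\tfrac{d}{dt}\la V,\nb b\ra+\|\nb b/\eps\|^2_{L^2}=\|\dv V\|^2_{L^2}+\mu\la\Dl V,\nb b\ra+(\lm+\mu)\la\nb\dv V,\nb b\ra.
\]
I would then form the corrected energy
\[
\mathcal{E}(t):=\tfrac12\|b/\eps\|^2_{H^1}+\tfrac12\|V\|^2_{H^1}+2\dl\la V,\nb b\ra,\qquad\dl:=c_0\mu,
\]
with $c_0$ a small absolute constant to be fixed at the end. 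Cauchy--Schwarz plus Young in the form
\[
2\dl\mu|\la\Dl V,\nb b\ra|\le\tfrac{\mu}{2}\|\Dl V\|^2_{L^2}+2\dl^2\mu\|\nb b\|^2_{L^2},
\]
\[
2\dl(\lm+\mu)|\la\nb\dv V,\nb b\ra|\le\tfrac{\lm+\mu}{2}\|\nb\dv V\|^2_{L^2}+2\dl^2(\lm+\mu)\|\nb b\|^2_{L^2},
\]
together with $2\dl\|\dv V\|^2\le\tfrac{\mu}{2}\|\nb V\|^2$ and $2\dl|\la V,\nb b\ra|\le\dl\eps(\|V\|^2+\|\nb b/\eps\|^2)$, reduce the task to verifying $4\dl\mu\eps^2\le1$, $4\dl(\lm+\mu)\eps^2\le1$, $\dl\le\mu/4$, and $\dl\eps\le1/2$. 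Writing $\dl=c_0\mu$ turns these into $4c_0\mu^2\eps^2\le1$, $4c_0\mu(\lm+\mu)\eps^2\le1$, $c_0\le1/4$, and $c_0\mu\eps\le1/2$; the hypotheses \eqref{mu0} together with the elementary consequence $\mu\le 3(\lm+\mu)$ of $2\mu+3\lm\ge0$ give $\mu(\lm+\mu)\eps^2\le1$ and $\mu\eps\le3$, so all four hold for $c_0$ small enough. The same conditions also ensure the coercivity $\mathcal{E}(t)\simeq\|b/\eps\|^2_{H^1}+\|V\|^2_{H^1}$.

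Combining the two energy identities with the cross identity then produces a differential inequality of the form
\[
\tfrac{d}{dt}\mathcal{E}+c\Bigl(\mu\|\nb V\|^2_{H^1}+\dl\|\nb b/\eps\|^2_{L^2}\Bigr)\le0,
\]
which integrates over $[0,t]$ to yield \eqref{en0} at $\al=0$; the general multi-index case follows by the same argument applied to $(\pr^\al b_0,\pr^\al\tl u_0)$. The chief subtlety is the precise choice of the coupling constant $\dl$: it must be small enough that the viscous error $2\dl^2\mu\|\nb b\|^2_{L^2}$ created by the cross term $\mu\la\Dl V,\nb b\ra$ is still absorbed by a fraction of $\dl\|\nb b/\eps\|^2_{L^2}$, while being large enough to yield a genuine density dissipation; the two bounds in \eqref{mu0} are exactly what make both the $\mu$- and the $(\lm+\mu)$-couplings simultaneously absorbable at the rate $\dl=c_0\mu$.
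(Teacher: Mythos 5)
Your proposal is correct and follows the same Matsumura--Nishida energy method as the paper: basic $H^1$ energy identity, the cross identity $\frac{d}{dt}\langle V,\nabla b\rangle$, a corrected functional, and absorption of the two viscous error terms using \eqref{mu0} and the physical restriction \eqref{visco}. The only cosmetic difference is the weight you attach to the cross term — you take $\delta=c_0\mu$, whereas the paper multiplies the cross identity by $\tfrac{\lambda+\mu}{2}$ so that the $b$-dissipation emerges directly at rate $(\lambda+\mu)/2$ and is only at the very end converted to $\mu$ via $2\mu+3\lambda\ge0$; your choice instead invokes $\mu\le3(\lambda+\mu)$ inside the absorption inequalities. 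Both normalizations yield the same estimate, so this is essentially the paper's argument.
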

\begin{proof}
Standard energy estimates show that
\be\label{en0-en0}
\fr12\fr{d}{dt}\left(\left\|\fr{\pr^{\al}b_0}{\eps}\right\|^2_{H^1}+\|\pr^{\al}\tl{u}_0\|^2_{H^1}\right)+\mu\|\nb\pr^\al\tl{u}_0\|_{H^1}^2+(\lm+\mu)\|\dv \pr^\al\tl{u}_0\|^2_{H^1}=0,
\ee
and
\be\label{cross-en0}
\fr{d}{dt}\la\pr^\al\tl{u}_0, \nb\pr^\al b_0 \ra+\left\|\fr{\nb\pr^\al b_0}{\eps}\right\|^2_{L^2}-\|\dv\pr^\al \tl{u}_0\|^2_{L^2}=\mu\la \Dl\pr^\al\tl{u}_0, \nb\pr^\al b_0\ra+(\lm+\mu)\la \nb\dv \pr^\al\tl{u}_0, \nb\pr^\al b_0\ra.
\ee
Consequently,
\beq
\nn&&\fr12\fr{d}{dt}\left(\left\|\fr{\pr^{\al}b_0}{\eps}\right\|^2_{H^1}+\|\pr^{\al}\tl{u}_0\|^2_{H^1}+({\lm+\mu})\la\pr^\al\tl{u}_0, \nb\pr^\al b_0 \ra\right)\\
\nn&&+\mu\|\nb\pr^\al\tl{u}_0\|_{H^1}^2+\fr{\lm+\mu}{2}\|\dv \pr^\al\tl{u}_0\|^2_{H^1}+\fr{\lm+\mu}{2}\left\|\fr{\nb\pr^\al b_0}{\eps}\right\|^2_{L^2}\\
&=&\fr{\mu(\lm+\mu)}{2}\la \Dl\pr^\al\tl{u}_0, \nb\pr^\al b_0\ra+\fr{(\lm+\mu)^2}{2}\la \nb\dv \pr^\al\tl{u}_0, \nb\pr^\al b_0\ra.
\eeq
Clearly, 
\beno
({\lm+\mu})\la\pr^\al\tl{u}_0, \nb\pr^\al b_0 \ra\le\fr12\left(\|\pr^\al\tl{u}_0\|^2_{L^2}+\left\|\fr{\nb\pr^\al b_0}{\eps}\right\|^2_{L^2}\right),
\eeno
provided $\eps(\lm+\mu)\le1$,  and hence
\be\label{app0}
\left\|\fr{\pr^{\al}b_0}{\eps}\right\|^2_{H^1}+\|\pr^{\al}\tl{u}_0\|^2_{H^1}+({\lm+\mu})\la\pr^\al\tl{u}_0, \nb\pr^\al b_0 \ra\approx\left\|\fr{\pr^{\al}b_0}{\eps}\right\|^2_{H^1}+\|\pr^{\al}\tl{u}_0\|^2_{H^1}.
\ee
On the other hand, if $(\lm+\mu)\mu\eps^2\le1$, there holds
\beqno
\nn\fr{\mu(\lm+\mu)}{2}\la \Dl\pr^\al\tl{u}_0, \nb\pr^\al b_0\ra&\le&\fr{\lm+\mu}{8}\left\|\fr{\nb\pr^\al b_0}{\eps}\right\|^2_{L^2}+\fr{(\lm+\mu)\mu^2\eps^2}{2}\|\Dl\pr^\al\tl{u}_0\|^2_{L^2}\\
&\le&\fr{\lm+\mu}{8}\left\|\fr{\nb\pr^\al b_0}{\eps}\right\|^2_{L^2}+\fr{\mu}{2}\|\Dl\pr^\al\tl{u}_0\|^2_{L^2},
\eeqno
and if $(\lm+\mu)\eps\le1$, we have
\beqno
\nn\fr{(\lm+\mu)^2}{2}\la \nb\dv \pr^\al\tl{u}_0, \nb\pr^\al b_0\ra&\le&\fr{\lm+\mu}{8}\left\|\fr{\nb\pr^\al b_0}{\eps}\right\|^2_{L^2}+\fr{(\lm+\mu)^3\eps^2}{2}\|\nb\dv \pr^\al\tl{u}_0\|^2_{L^2}\\
&\le&\fr{\lm+\mu}{8}\left\|\fr{\nb\pr^\al b_0}{\eps}\right\|^2_{L^2}+\fr{\lm+\mu}{2}\|\nb\dv \pr^\al\tl{u}_0\|^2_{L^2}.
\eeqno
It follows that
\be\label{dinq0}
\fr{d}{dt}\left(\left\|\fr{\pr^{\al}b_0}{\eps}\right\|^2_{H^1}+\|\pr^{\al}\tl{u}_0\|^2_{H^1}+({\lm+\mu})\la\pr^\al\tl{u}_0, \nb\pr^\al b_0 \ra\right)+\mu\|\nb\pr^\al\tl{u}_0\|_{H^1}^2+\fr{\lm+\mu}{2}\left\|\fr{\nb\pr^\al b_0}{\eps}\right\|^2_{L^2}\le0.
\ee
Combining this with \eqref{app0}, and using \eqref{visco}, we  get \eqref{en0}. The proof of Proposition \ref{prop-en0} is completed.
\end{proof}
It is worth pointing out that the differential inequality \eqref{dinq0} implies the exponential decay of the projection of $(b_0,\tl{u}_0)$ onto the nonzero frequencies in $z$. We shall give a more accurate estimate below.
\begin{lem}
Assume that the Mach number $\eps$ and the viscous coefficients $\lm$ and $\mu$ satisfy
\be\label{mu-1}
(\lm+2\mu)\eps\le1.
\ee
Then there exists a positive constant $C$, independent of $\eps, \lm$ and $\mu$, such that for any multi-index $\al=(\al_1, \al_2)$, there holds
\be\label{decay1}
\left\|\fr{\pr^{\al}P_{l\neq0}b_0}{\eps}\right\|^2_{L^2}+\|\pr^{\al}P_{l\neq0}\tl{u}_0\|^2_{L^2}
\le Ce^{-\fr13\mu t}\left(\left\|\fr{\pr^{\al}P_{l\neq0}(b_{\mathrm{in}})_0}{\eps}\right\|^2_{L^2}+\|\pr^{\al}P_{l\neq0}(\tl{u}_{\mathrm{in}})_0\|^2_{L^2}\right).
\ee
\end{lem}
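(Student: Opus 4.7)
The approach is to exploit the Poincar\'e-type inequality available on $P_{l\neq0}$-projected functions on $\mathbb{R}\times\mathbb{T}$, namely $\|P_{l\neq0}f\|_{L^2}\le\|\pr_z P_{l\neq0}f\|_{L^2}\le\|\nb P_{l\neq0}f\|_{L^2}$, which converts the dissipation already established in Proposition 3.1 into effective damping. A preliminary remark is that the weaker hypothesis \eqref{mu-1} of this lemma actually implies \eqref{mu0}: the viscosity condition \eqref{visco} forces $\lm+2\mu\ge 4\mu/3>0$, hence $\mu\eps\le\tfrac34(\lm+2\mu)\eps\le\tfrac34$, which gives in turn $(\lm+\mu)\mu\eps^2\le(\lm+2\mu)\eps\cdot\mu\eps\le 1$ and $(\lm+\mu)\eps\le(\lm+2\mu)\eps\le 1$. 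So the machinery of Proposition 3.1 is at our disposal.

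The main step is to redo the energy plus cross-term combination underlying \eqref{dinq0} at the $L^2$ level and restricted to $P_{l\neq0}(b_0,\tl u_0)$. Since $\pr^\al$ and $P_{l\neq0}$ both commute with the constant-coefficient subsystem \eqref{Li-0}$_{1,2}$, it suffices to treat $\al=0$. The same linear combination as in the proof of Proposition 3.1, with weight $\lm+\mu$ on the cross term $\la P_{l\neq0}\tl u_0,\nb P_{l\neq0}b_0\ra$, yields a coercive functional $\mathcal F\approx\|P_{l\neq0}b_0/\eps\|_{L^2}^2+\|P_{l\neq0}\tl u_0\|_{L^2}^2$ (the equivalence follows from $(\lm+\mu)\eps\le 1$ exactly as in \eqref{app0}) that satisfies
\[
\fr{d}{dt}\mathcal F+\mu\|\nb P_{l\neq0}\tl u_0\|_{L^2}^2+\fr{\lm+\mu}{2}\left\|\fr{\nb P_{l\neq0}b_0}{\eps}\right\|_{L^2}^2\le 0.
\]
Applying the Poincar\'e inequality in $z$ to the two dissipation terms and invoking $\lm+\mu\ge\mu/3$ (again coming from \eqref{visco}) produces a closed differential inequality $\fr{d}{dt}\mathcal F+c\mu\mathcal F\le 0$, whence \eqref{decay1} follows by Gronwall.

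The delicate point is recovering the sharp constant $\mu/3$ in the exponent, since the crude $L^2$ energy combination above delivers only a rate $c\mu$ with $c$ strictly less than $\tfrac13$. To achieve $\mu/3$ one refines the analysis in Fourier variables: writing $\hat{\tl u}_0(\xi)=\hat u_\parallel\,\xi/|\xi|+\hat u_\perp$ with $\xi=(\eta,l)$ and $|\xi|\ge 1$, the divergence-free part $\hat u_\perp$ decays at rate $\mu|\xi|^2\ge\mu$ via a pure heat equation, while the coupled pair $(\hat b_0/\eps,\hat u_\parallel)$ satisfies a $2\times 2$ ODE whose eigenvalue real parts are bounded below by $\min\{\tfrac12(\lm+2\mu)|\xi|^2,\,((\lm+2\mu)\eps^2)^{-1}\}$; both of these bounds exceed $\mu/3$ under \eqref{mu-1} together with $\mu\le\tfrac34(\lm+2\mu)$. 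Building a symbol-dependent Kawashima-type functional of the form $|\hat b_0/\eps|^2+|\hat u_\parallel|^2-c(\xi)\,\mathfrak{Re}\bigl(i\hat b_0\overline{\hat u_\parallel}/\eps\bigr)$ with $c(\xi)$ tuned so that the mixed term is absorbed into the dissipation and the damping rate is at least $\mu/3$ uniformly in $\xi$, and then integrating in $\xi$ via Plancherel, produces \eqref{decay1} with the stated constant. The higher-$\al$ case is identical since $\pr^\al$ commutes with the operator.
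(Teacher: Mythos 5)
There is a genuine gap in your first step. The cross term $(\lm+\mu)\la P_{l\neq 0}\tl{u}_0,\nb P_{l\neq 0}b_0\ra$ borrowed from Proposition~3.1 does not transplant to the $L^2$ level. Coercivity fails: this cross term equals $-(\lm+\mu)\la\dv P_{l\neq 0}\tl{u}_0,P_{l\neq 0}b_0\ra$, which carries one derivative of $\tl{u}_0$ (equivalently, one derivative of $b_0$) and is therefore not dominated by $\|P_{l\neq 0}b_0/\eps\|_{L^2}^2+\|P_{l\neq 0}\tl{u}_0\|_{L^2}^2$; the equivalence \eqref{app0} used $\|\nb b_0/\eps\|_{L^2}^2$ from the $H^1$ energy, which is absent here. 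Moreover, the time-derivative identity for this cross term (the $L^2$ analogue of \eqref{cross-en0}) carries the errors $\mu\la\Dl\tl{u}_0,\nb b_0\ra$ and $(\lm+\mu)\la\nb\dv\tl{u}_0,\nb b_0\ra$ on the right-hand side, whose absorption in Proposition~3.1 relied on the $H^2$-level dissipation $\mu\|\nb\tl{u}_0\|_{H^1}^2$; at the pure $L^2$ level only $\mu\|\nb\tl{u}_0\|_{L^2}^2$ is available, and your claimed differential inequality does not follow. The paper avoids both issues by choosing a \emph{smoother} cross term, $\mu\la P_{l\neq 0}b_0,\Dl^{-1}\dv P_{l\neq 0}\tl{u}_0\ra$, obtained by hitting the momentum equation with $\Dl^{-1}\dv P_{l\neq0}$: the extra $\Dl^{-1}$ makes the cross term controllable by the $L^2$ energy via Poincar\'e in $z$ for $\tl{u}_0$ alone, the resulting identity \eqref{cross-new} delivers the undifferentiated $\|b_0/\eps\|_{L^2}^2$ directly (so no Poincar\'e for $b_0$ is needed), and the sole error $(\lm+2\mu)\la\dv\tl{u}_0,b_0\ra$ sits at exactly the right level.

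Your second, Fourier-side Kawashima argument is a legitimately different route and can be made to work. Two caveats: an eigenvalue bound alone does not give an $\eps,\lm,\mu$-independent prefactor $C$ (the coupling matrix is non-normal), so the symbol-dependent Lyapunov functional is essential rather than a refinement; and with the paper's Fourier convention, where $\pr_t(\hat b_0/\eps)=-\tfrac{i|\xi|}{\eps}\hat u_\parallel$, the correcting term should be $+c(\xi)\,\mathfrak{Re}\bigl(i\hat b_0\overline{\hat u_\parallel}/\eps\bigr)$ with $c(\xi)>0$, not the minus sign you wrote, for the $|\hat b_0/\eps|^2$ contribution to come out negative. Granting these, together with $|\xi|\ge1$, $\mu\le\tfrac34(\lm+2\mu)$ from \eqref{visco}, and \eqref{mu-1}, the pointwise decay rate is indeed $\ge\mu/3$ uniformly in $\xi$, and Plancherel finishes. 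This path is more computational than the paper's single physical-space cross term, but it is self-contained and makes the longitudinal/transverse splitting explicit.
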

\begin{proof}
Clearly, the equality \eqref{en0-en0} still holds for $(P_{l\neq0}b_0, P_{l\neq0}\tl{u}_0)$ with the $H^1$ norm replaced by $L^2$ norm
\be\label{en0-basic}
\fr12\fr{d}{dt}\left(\left\|\fr{\pr^{\al}P_{l\neq0}b_0}{\eps}\right\|^2_{L^2}+\|\pr^{\al}P_{l\neq0}\tl{u}_0\|^2_{L^2}\right)+\mu\|\nb\pr^\al P_{l\neq0}\tl{u}_0\|_{L^2}^2+(\lm+\mu)\|\dv \pr^\al\tl P_{l\neq0}{u}_0\|^2_{L^2}=0.
\ee
Apply the operator $\Dl^{-1}\dv P_{l\neq0}$ to the equation $\eqref{Li-0}_2$ yields
\beno
\pr_t\Dl^{-1}\dv P_{l\neq0}\tl{u}_0+\fr{P_{l\neq0}b_0}{\eps^2}-(\lm+2\mu)\dv P_{l\neq0}\tl{u}_0=0.
\eeno
Then it is easy to verify that
\beq\label{cross-new}
\nn&&\fr{d}{dt}\la \pr^\al P_{l\neq0} b_0, \Dl^{-1}\dv \pr^\al P_{l\neq0}\tl{u}_0 \ra+\left\|\fr{\pr^\al P_{l\neq0} b_0}{\eps}\right\|^2_{L^2}-\left\|(-\Dl)^{-\fr12}\dv \pr^\al P_{l\neq0}\tl{u}\right\|^2_{L^2}\\&=&(\lm+2\mu)\la \dv \pr^\al P_{l\neq0}\tl{u}_0, \pr^\al P_{l\neq0} b_0\ra.
\eeq
Combining \eqref{en0-basic} with \eqref{cross-new}, and using \eqref{mu-1}, we are led to
\beq
\nn&&\fr12\fr{d}{dt}\left(\left\|\fr{\pr^{\al}P_{l\neq0}b_0}{\eps}\right\|^2_{L^2}+\|\pr^\al P_{l\neq0}\tl{u}_0\|^2_{L^2}+\mu\la \pr^\al P_{l\neq0}b_0, \Dl^{-1}\dv \pr^\al P_{l\neq0}\tl{u}_0 \ra\right)\\
\nn&&+\fr{\mu}{2}\left(\left\|\fr{\pr^\al P_{l\neq0} b_0}{\eps}\right\|^2_{L^2}+\|\nb\pr^\al P_{l\neq0}\tl{u}_0\|_{L^2}^2\right)\\
&\le&\fr{(\lm+2\mu)\mu}{2}\| \dv \pr^\al P_{l\neq0}\tl{u}_0\|_{L^2} \|\pr^\al P_{l\neq0}b_0\|_{L^2}\le\fr{\mu}{4}\left(\left\|\fr{\pr^\al P_{l\neq0} b_0}{\eps}\right\|^2_{L^2}+\|\nb\pr^\al P_{l\neq0}\tl{u}_0\|_{L^2}^2\right).
\eeq
Noticing that
\beqno
&&\mu\left(\|\nb\pr^\al P_{l\neq0}\tl{u}_0\|_{L^2}^2+\left\|\fr{\pr^\al P_{l\neq0}b_0}{\eps}\right\|^2_{L^2}\right)\nn\\
&\ge&\fr{2\mu}{3}\left(\left\|\fr{\pr^{\al}P_{l\neq0}b_0}{\eps}\right\|^2_{L^2}+\|\pr^{\al}P_{l\neq0}\tl{u}_0\|^2_{L^2}+\mu\la \pr^\al P_{l\neq0}b_0, \Dl^{-1}\dv \pr^\al P_{l\neq0}\tl{u}_0 \ra\right),
\eeqno
then \eqref{decay1} follows immediately.
\end{proof}
Recalling that the projection of $(b_0, \tl{u}_0)$ onto the zero frequencies in $z$ satisfies the system \eqref{pro-z}, it is well known (see \cite{Kob02}, for instance) that the linear operator in \eqref{pro-z} behaves like the heat semigroup, so that $(b_{00}, u^2_{00})$ decays as fast as the solution to the corresponding  heat equation  on $\mathbb{R}$ with viscous cofficient $\lm+2\mu$. For the convenience of reads, we shall give a proof of this fact in the following lemma. 
\begin{lem}\label{lem-00}
Let $k\in\mathbb{N}$. Assume that the Mach number $\eps$ and the viscous coefficients $\lm$ and $\mu$ satisfy
\be\label{mu-2}
 \lm+2\mu\le1,\ \ \mathrm{and}\ \ (\lm+2\mu)\eps\le1,
\ee
and the initial data $(({b}_{\mathrm{in}})_{00}, ({u}^2_{\mathrm{in}})_{00})\in \dot{H}^k\cap L^1$.
Then there exists a positive constant $C$,  depending on $k$, but independent of $\eps$, $\lm$ and $\mu$, such that   
\be\label{de1}
\left\|\pr^k_y\left(\fr{b_{00}}{\eps}, u^2_{00}\right)\right\|^2_{L^2}
\le C\big((\lm+2\mu)\la t\ra\big)^{-(k+\fr12)}\left\|\left(\fr{({b}_{\mathrm{in}})_{00}}{\eps},({u}^2_{\mathrm{in}})_{00}\right)\right\|^2_{\dot{H}^k\cap L^1},
\ee
and
\be\label{de2}
\left\|\pr^k_yu^3_{00}\right\|^2_{L^2}
\le C\big(\mu\la t\ra\big)^{-(k+\fr12)}\left\|({u}^3_{\mathrm{in}})_{00}\right\|^2_{\dot{H}^k\cap L^1}.
\ee
\end{lem}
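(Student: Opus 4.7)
The plan is to split Lemma 3.2 into the two decoupled parts of system \eqref{pro-z}: the scalar heat equation for $u^3_{00}$ and the $2\times 2$ coupled parabolic--hyperbolic system for $(b_{00},u^2_{00})$. Both will be attacked via Fourier transform in $y$ (say with variable $\eta$) followed by a low/high frequency split whose threshold is dictated by the physical parameters.

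For $u^3_{00}$, the equation $\partial_t u^3_{00}-\mu\partial_{yy}u^3_{00}=0$ has explicit solution $\widehat{u^3_{00}}(t,\eta)=e^{-\mu\eta^2 t}\widehat{(u^3_{\mathrm{in}})_{00}}(\eta)$. Then Plancherel gives
$\|\partial_y^k u^3_{00}\|_{L^2}^2=\int_{\mathbb{R}}|\eta|^{2k}e^{-2\mu\eta^2 t}|\widehat{(u^3_{\mathrm{in}})_{00}}|^2\,d\eta$. Splitting at $|\eta|=1$, I would use $|\hat f(\eta)|\le\|f\|_{L^1}$ on the low part together with the identity $\int_{\mathbb{R}}|\eta|^{2k}e^{-2\mu\eta^2 t}\,d\eta\lesssim(\mu t)^{-(k+1/2)}$, and on the high part absorb the $|\eta|^{2k}$ into the $\dot{H}^k$ norm and use $e^{-2\mu t}\lesssim(\mu\langle t\rangle)^{-(k+1/2)}$ (valid since $\mu\le 1$). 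For $t\le 1$ the trivial bound $\|\partial_y^k u^3_{00}\|_{L^2}\le\|(u^3_{\mathrm{in}})_{00}\|_{\dot H^k}$ together with $\langle t\rangle\approx 1$ closes \eqref{de2}.

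The coupled system for $(b_{00},u^2_{00})$ is the heart of the proof. Setting $\rho(t,\eta)=\widehat{b_{00}}(t,\eta)/\eps$ and $v(t,\eta)=\widehat{u^2_{00}}(t,\eta)$, the Fourier-side ODE reads $\partial_t(\rho,v)^\top=A(\eta)(\rho,v)^\top$ with $A(\eta)=\bigl(\begin{smallmatrix}0 & -i\eta/\eps \\ -i\eta/\eps & -(\lambda+2\mu)\eta^2\end{smallmatrix}\bigr)$, whose eigenvalues are $\sigma_\pm(\eta)=-\tfrac{(\lambda+2\mu)\eta^2}{2}\pm\sqrt{\tfrac{(\lambda+2\mu)^2\eta^4}{4}-\tfrac{\eta^2}{\eps^2}}$ with critical frequency $\eta_c=2/((\lambda+2\mu)\eps)$. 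I would first derive the basic identity $\partial_t(|\rho|^2+|v|^2)=-2(\lambda+2\mu)\eta^2|v|^2$ and then augment it, in the spirit of Matsumura--Nishida, with the cross-term $\mathrm{Re}(i\bar v\rho)$, which satisfies $\partial_t\mathrm{Re}(i\bar v\rho)+(\lambda+2\mu)\eta^2\mathrm{Re}(i\bar v\rho)=-(\eta/\eps)(|\rho|^2-|v|^2)$. Forming a Lyapunov functional $\mathcal L_\eta=|\rho|^2+|v|^2+\beta\,h(\eta)\,\mathrm{Re}(i\bar v\rho)$ with an $\eta$-dependent weight $h$ chosen so that $\mathcal L_\eta\approx|\rho|^2+|v|^2$ and $\partial_t\mathcal L_\eta\le -c\,\kappa(\eta)\mathcal L_\eta$, where $\kappa(\eta)=\min\bigl((\lambda+2\mu)\eta^2,\,1/((\lambda+2\mu)\eps^2)\bigr)$, yields the pointwise semigroup bound
\[
|e^{tA(\eta)}(\rho_{\mathrm{in}},v_{\mathrm{in}})|^2\le C\,e^{-c\kappa(\eta)t}\bigl(|\rho_{\mathrm{in}}|^2+|v_{\mathrm{in}}|^2\bigr).
\]
An alternative route is a direct eigenvector analysis away from $\eta_c$ (where the diagonalization is well-conditioned) patched with a compact-interval matrix exponential bound near $\eta_c$.

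Once the pointwise semigroup bound is in hand, I would apply Plancherel and split the integral at $\eta_c$. On $|\eta|\le\eta_c$ the rate is heat-like, $e^{-c(\lambda+2\mu)\eta^2 t}$, and bounding the initial Fourier data by $\|\cdot\|_{L^1}$ together with $\int|\eta|^{2k}e^{-c(\lambda+2\mu)\eta^2 t}\,d\eta\lesssim((\lambda+2\mu)t)^{-(k+1/2)}$ produces the advertised rate. On $|\eta|\ge\eta_c$ the decay is the uniform exponential $e^{-ct/((\lambda+2\mu)\eps^2)}$; I absorb $|\eta|^{2k}$ into $\|\cdot\|_{\dot H^k}^2$ and then use the hypothesis $(\lambda+2\mu)\eps\le 1$, which gives $1/((\lambda+2\mu)\eps^2)\ge(\lambda+2\mu)$, so the exponential dominates the polynomial rate $((\lambda+2\mu)t)^{-(k+1/2)}$ for all $t>0$. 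For $t\le 1$ the $L^2$ bound $\|\partial_y^k(b_{00}/\eps,u^2_{00})\|_{L^2}\le\|(\partial_y^k(b_{\mathrm{in}})_{00}/\eps,\partial_y^k(u^2_{\mathrm{in}})_{00})\|_{L^2}$ already covers \eqref{de1} since $\langle t\rangle\approx 1$ and $\lambda+2\mu\le 1$. The hard step is the Lyapunov construction near the critical frequency $\eta_c$, where the two eigenmodes coalesce and neither the purely dissipative nor the purely acoustic viewpoint applies; tuning $h(\eta)$ to remain coercive while providing the correct dissipation rate across this transition is exactly where the assumption \eqref{mu-2} is used in a crucial way.
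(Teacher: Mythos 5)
Your proposal is correct in spirit and uses the same Matsumura--Nishida cross-term mechanism (and the identical Plancherel plus $L^1$/$\dot H^k$ frequency split), but it takes a genuinely different route from the paper in one key respect: where to cut the frequency axis. You propose cutting at the acoustic--diffusive transition $\eta_c=2/((\lambda+2\mu)\eps)$, where the two eigenvalues of $A(\eta)$ coalesce, and building an $\eta$-dependent Lyapunov weight $h(\eta)$ calibrated to the sharp rate $\kappa(\eta)=\min\bigl((\lambda+2\mu)\eta^2,\,1/((\lambda+2\mu)\eps^2)\bigr)$. The paper instead follows \cite{ZZZ18} and cuts at the parameter-independent threshold $|\eta|=1$, using the cross-term weight $\lambda+2\mu$ for $|\eta|\le 1$ and $(\lambda+2\mu)/(2\eta^2)$ for $|\eta|>1$; this yields only the weaker high-frequency rate $e^{-\frac13(\lambda+2\mu)t}$, which is nevertheless ample for \eqref{de1} once one observes that the exponential beats any polynomial. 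What the paper's version buys is the complete avoidance of the degeneracy at $\eta_c$: since the threshold is $|\eta|=1$ rather than $\eta_c$, the Jordan-block coalescence never has to be analyzed, and the constants in the Lyapunov equivalence $\mathcal L_\eta\approx|\rho|^2+|v|^2$ are trivially uniform in $(\eps,\lambda,\mu)$ by \eqref{mu-2}. What your version buys is sharper insight into the true dissipation rate as a function of $\eta$.

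The one real gap in your proposal is exactly the step you flag as ``hard'': you need the pointwise semigroup bound $|e^{tA(\eta)}|\le Ce^{-c\kappa(\eta)t}$ with $C,c$ uniform in $(\eps,\lambda,\mu,\eta)$ across $\eta=\eta_c$, and you do not actually produce the weight $h(\eta)$ that accomplishes this. It is true in principle (the Jordan-block prefactor $1+\omega t$ against $e^{-\omega t}$ is absorbed into $e^{-\omega t/2}$), but the alternative you sketch --- diagonalize away from $\eta_c$ and patch with a compact-interval bound --- is not immediate either, since $\eta_c$ moves with the parameters and ``compact'' is not uniform. If you want to make your approach rigorous at this point, the cleanest repair is to simply adopt the paper's cheap fix: since you never needed the sharp rate above $|\eta|=1$ in the first place (the $\dot H^k$ norm on the initial data absorbs $|\eta|^{2k}$ for any $|\eta|\ge 1$), you can relocate the split to $|\eta|=1$ and use the constant weight $\lambda+2\mu$ below it and the weight $(\lambda+2\mu)/(2\eta^2)$ above it, at which point the coercivity of $\mathcal L_\eta$ follows directly from $\lambda+2\mu\le 1$ and $(\lambda+2\mu)\eps\le 1$, with no need to track eigenvector conditioning at all. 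Your treatment of $u^3_{00}$ via the explicit heat kernel is identical to the paper and needs no adjustment.
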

\begin{proof}
We use the method in \cite{ZZZ18} to  avoid subtle analysis on the eigenvalues of the system \eqref{pro-z}. Since $u^3_{00}$ satisfies a heat equation, we only focus on the sub-system for $(b_{00}, u^2_{00})$ below. Similar to \eqref{en0-en0} and \eqref{cross-en0}, direction calculations in Fourier variable show that
\be\label{Fbu00}
\fr12\fr{d}{dt}\left(\left|\fr{\hat{b}_{00}}{\eps}\right|^2+\left|\hat{u}^2_{00}\right|^2\right)+(\lm+2\mu)\eta^2\left|\hat{u}^2_{00}\right|^2=0,
\ee
and
\be\label{Fcross}
\fr{d}{dt}\left(i\eta\bar{\hat{b}}_{00}\hat{u}^2_{00}\right)-\eta^2\left|\fr{\hat{b}_{00}}{\eps}\right|^2+\eta^2\left|\hat{u}^2_{00}\right|^2+(\lm+2\mu)i\eta^3\bar{\hat{b}}_{00}\hat{u}^2_{00}=0.
\ee
For $|\eta|\le1$,  a linear combination of \eqref{Fbu00} and \eqref{Fcross} yields
\beq\label{Fen00}
\nn&&\fr12\fr{d}{dt}\left(\left|\fr{\hat{b}_{00}}{\eps}\right|^2+\left|\hat{u}^2_{00}\right|^2-(\lm+2\mu)\left(i\eta\bar{\hat{b}}_{00}\hat{u}^2_{00}\right)\right)+\fr{(\lm+2\mu)\eta^2}{2}\left(\left|\fr{\hat{b}_{00}}{\eps}\right|^2+\left|\hat{u}^2_{00}\right|^2\right)\\
&=&\fr{(\lm+2\mu)^2\eta^2}{2}i\eta\bar{\hat{b}}_{00}\hat{u}^2_{00}\le\fr{(\lm+2\mu)\eta^2}{4}\left(\left|\fr{\hat{b}_{00}}{\eps}\right|^2+\left|\hat{u}^2_{00}\right|^2\right),
\eeq
where we have used \eqref{mu-2}. Using again the fact $|\eta|\le1$ and \eqref{mu-2}, one deduces that
\be\label{app1}
\fr12\left(\left|\fr{\hat{b}_{00}}{\eps}\right|^2+\left|\hat{u}^2_{00}\right|^2\right)\le\left|\fr{\hat{b}_{00}}{\eps}\right|^2+\left|\hat{u}^2_{00}\right|^2-(\lm+2\mu)\left(i\eta\bar{\hat{b}}_{00}\hat{u}^2_{00}\right)\le\fr32\left(\left|\fr{\hat{b}_{00}}{\eps}\right|^2+\left|\hat{u}^2_{00}\right|^2\right).
\ee
It follows from \eqref{Fen00} and \eqref{app1} that
\be\label{eta1-}
\left|\fr{\hat{b}_{00}}{\eps}\right|^2+\left|\hat{u}^2_{00}\right|^2\le Ce^{-\fr13(\lm+2\eta)\eta^2t}\left(\left|\fr{(\hat{b}_{\mathrm{in}})_{00}}{\eps}\right|^2+\left|(\hat{u}^2_{\mathrm{in}})_{00}\right|^2\right), \ \ \mathrm{if}\ \ |\eta|\le1.
\ee
For $|\eta|>1$, multiplying \eqref{Fcross} by $-\fr{\lm+2\mu}{2\eta^2}$, and adding the resulting equality to \eqref{Fbu00}, we arrive at
\beq
\nn&&\fr12\fr{d}{dt}\left(\left|\fr{\hat{b}_{00}}{\eps}\right|^2+\left|\hat{u}^2_{00}\right|^2-\fr{\lm+2\mu}{\eta}\left(i\bar{\hat{b}}_{00}\hat{u}^2_{00}\right)\right)+(\lm+2\mu)\left(\fr12\left|\fr{\hat{b}_{00}}{\eps}\right|^2+(\eta^2-\fr12)\left|\hat{u}^2_{00}\right|^2\right)\\
\nn&=&\fr{(\lm+2\mu)^2}{2}i\eta\hat{u}^2_{00}\bar{\hat{b}}_{00}\le\fr{(\lm+2\mu)}{4}\left(\left|\fr{\hat{b}_{00}}{\eps}\right|^2+\eta^2\left|\hat{u}^2_{00}\right|^2\right).
\eeq
Consequently,
\be\label{eta1+}
\left|\fr{\hat{b}_{00}}{\eps}\right|^2+\left|\hat{u}^2_{00}\right|^2\le Ce^{-\fr13(\lm+2\eta)t}\left(\left|\fr{(\hat{b}_{\mathrm{in}})_{00}}{\eps}\right|^2+\left|(\hat{u}^2_{\mathrm{in}})_{00}\right|^2\right), \ \ \mathrm{if}\ \ |\eta|>1.
\ee
Next, by virtue of Plancherel's theorem, \eqref{eta1-} and \eqref{eta1+}, we find that for all $k\in\mathbb{N}$, there holds
\beq\label{HL}
\nn\left\|\pr^k_y\left(\fr{b_{00}}{\eps}, u^2_{00}\right)\right\|^2_{L^2}&=&\int_{\mathbb{R}}\eta^{2k}\left(\left|\fr{\hat{b}_{00}}{\eps}\right|^2+\left|\hat{u}^2_{00}\right|^2\right)d\eta\\
\nn&\le&C\int_{|\eta|\le1}\eta^{2k}e^{-\fr13(\lm+2\mu)\eta^2t}d\eta\left(\left\|\fr{(\hat{b}_{\mathrm{in}})_{00}}{\eps}\right\|^2_{L^\infty}+\left\|(\hat{u}^2_{\mathrm{in}})_{00}\right\|^2_{L^\infty}\right)\\
\nn&&+Ce^{-\fr13(\lm+2\mu)t}\int_{|\eta|\ge1}\eta^{2k}\left(\left|\fr{(\hat{b}_{\mathrm{in}})_{00}}{\eps}\right|^2+\left|(\hat{u}^2_{\mathrm{in}})_{00}\right|^2\right)d\eta\\
&\le&Ce^{\fr13(\lm+2\mu)}\left(((\lm+2\mu)(1+t))^{-(k+\fr12)}+e^{-\fr13(\lm+2\mu)(1+t)}\right)\left\|\left(\fr{({b}_{\mathrm{in}})_{00}}{\eps},({u}^2_{\mathrm{in}})_{00}\right)\right\|^2_{\dot{H}^k\cap L^1}.
\eeq
Therefore, in view of \eqref{mu-2}, the inequality \eqref{de1} holds, and \eqref{de2} can be obtained in the same manner as \eqref{HL}. This completes the proof of Lemma \ref{lem-00}.
\end{proof}
\subsection{The lift-up effect} Now we are in a position to investigate $u^1_0$. Noting that $u^1_0$ satisfies $\eqref{Li-0}_3$, which can be solved explicitly
\be\label{u10}
u^1_0(t)=e^{\mu t\Dl}(u^1_{\mathrm{in}})_0-\int_0^te^{\mu (t-s)\Dl}u^2_0(s)ds.
\ee
We would like to remark that in the corresponding 3D incompressible case, $u^2_0(s)=e^{\mu s\Dl}(u^2_0)_{\mathrm{in}}$, and the formula \eqref{u10} reduces to 
\[
u^1_0(t)=e^{\mu t\Dl}\left((u^1_{\mathrm{in}})_0-t(u^2_{\mathrm{in}})_0\right).
\]
The linear in time growth predicted by this formula for $t\lesssim \mu^{-1}$ is known as the lift-up effect. Moreover, in that case the divergence free condition implies that $u^2_0=P_{l\neq0}u^2_0$, and the following estimate for $u^1_0$ is available
\be\label{incom-u10}
\|u^1_0\|^2_{L^\infty L^2}+\mu\|\nb u^1_0\|^2_{L^2L2}\le \mu^{-1}\|(u_0)_{\mathrm{in}}\|^2_{L^2}.
\ee
Nevertheless, in this paper the spectral gap  made via the divergence free condition does not hold anymore, and it is more difficult to  get a uniform bound for $u^1_0$ like \eqref{incom-u10}. Our strategy is to get  faster decay of $u^2_0$ at the price of more restriction on  the initial data. 
\begin{prop}\label{prop-liftup}
Assume that the Mach number $\eps$ and the viscous coefficients $\lm$ and $\mu$ satisfy \eqref{mu-2}, and the projection of the initial data onto the zero frequencies in $x$ and $z$ satisfy $(({b}_{\mathrm{in}})_{00}, ({u}^2_{\mathrm{in}})_{00})\in L^2\cap L^1$.
Then there exists a positive constant $C$,  independent of $\eps$, $\lm$ and $\mu$, such that
\beq\label{liftup}
\nn \|u^1_0(t)\|_{L^2}+\mu^\fr12\|\nb u^1_0\|_{L^2L^2}&\leq& \|(u^1_{\mathrm{in}})_0\|_{L^2}+C\mu^{-1}\left(\left\|\fr{P_{l\neq0}(b_{\mathrm{in}})_0}{\eps}\right\|_{L^2}+\|P_{l\neq0}(\tl{u}_{\mathrm{in}})_0\|_{L^2}\right)\\
&&+C(\lm+2\mu)^{-\fr14}\la t\ra^{\fr34}\left\|\left(\fr{({b}_{\mathrm{in}})_{00}}{\eps},({u}^2_{\mathrm{in}})_{00}\right)\right\|_{L^2\cap L^1}.
\eeq
\end{prop}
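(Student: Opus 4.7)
The plan is to combine the Duhamel representation \eqref{u10} for $u^1_0$ with an $L^2$ energy identity derived from $\eqref{Li-0}_3$, then reduce the whole estimate to bounding the time integral $J(t):=\int_0^t\|u^2_0(s)\|_{L^2}\,ds$. This integral is then split according to the $z$-frequency decomposition $u^2_0=P_{l\neq0}u^2_0+u^2_{00}$, and each piece is controlled by the already established decay estimates \eqref{decay1} and Lemma \ref{lem-00}.

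First, from the Duhamel representation \eqref{u10} and the $L^2$-contractivity of the heat semigroup $e^{\mu\tau\Dl}$, I immediately get $\|u^1_0(t)\|_{L^2}\le\|(u^1_{\mathrm{in}})_0\|_{L^2}+J(t)$. Next, testing $\eqref{Li-0}_3$ against $u^1_0$ produces
\[
\fr12\fr{d}{dt}\|u^1_0\|^2_{L^2}+\mu\|\nb u^1_0\|^2_{L^2}=-\la u^2_0,u^1_0\ra.
\]
Integrating in time and applying the just-obtained pointwise bound $\|u^1_0(s)\|_{L^2}\le\|(u^1_{\mathrm{in}})_0\|_{L^2}+J(s)\le\|(u^1_{\mathrm{in}})_0\|_{L^2}+J(t)$ for $s\in[0,t]$ on the right, I find
\[
2\mu\|\nb u^1_0\|^2_{L^2L^2}\le\|(u^1_{\mathrm{in}})_0\|^2_{L^2}+2\bigl(\|(u^1_{\mathrm{in}})_0\|_{L^2}+J(t)\bigr)J(t)\le\bigl(\|(u^1_{\mathrm{in}})_0\|_{L^2}+2J(t)\bigr)^2,
\]
so altogether $\|u^1_0(t)\|_{L^2}+\mu^{1/2}\|\nb u^1_0\|_{L^2L^2}\lesssim\|(u^1_{\mathrm{in}})_0\|_{L^2}+J(t)$.

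It remains to estimate $J(t)$. Using the exponential decay \eqref{decay1} (with $\al=0$) for the nonzero-$l$ projection, together with $\int_0^t e^{-\mu s/6}\,ds\le 6/\mu$, yields
\[
\int_0^t\|P_{l\neq0}u^2_0(s)\|_{L^2}\,ds\lesssim\mu^{-1}\left(\left\|\fr{P_{l\neq0}(b_{\mathrm{in}})_0}{\eps}\right\|_{L^2}+\|P_{l\neq0}(\tl u_{\mathrm{in}})_0\|_{L^2}\right).
\]
For the one-dimensional mode $u^2_{00}$, Lemma \ref{lem-00} with $k=0$ gives the slow decay $\|u^2_{00}(s)\|_{L^2}\lesssim((\lm+2\mu)\la s\ra)^{-1/4}\|(\frac{(b_{\mathrm{in}})_{00}}{\eps},(u^2_{\mathrm{in}})_{00})\|_{L^2\cap L^1}$; integrating and using $\int_0^t\la s\ra^{-1/4}\,ds\lesssim\la t\ra^{3/4}$ produces the factor $(\lm+2\mu)^{-1/4}\la t\ra^{3/4}$. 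Summing the two contributions gives exactly \eqref{liftup}.

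The main obstacle is not technical—each building block is in hand—but conceptual: one must recognize that the zero-$(k,l)$ mode $u^2_{00}$ is effectively one-dimensional and only decays like $\la s\ra^{-1/4}$ in $L^2$, which is too slow to keep $J(t)$ bounded. This is precisely the mechanism behind the $\la t\ra^{3/4}$ growth in \eqref{liftup} and explains why the compressible lift-up effect is strictly stronger than in the incompressible setting, where the divergence-free condition forces $u^2_0=P_{l\neq0}u^2_0$ and hence $u^1_0$ remains uniformly bounded in time.
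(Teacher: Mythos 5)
Your proof is correct and its overall architecture matches the paper's: both reduce the whole estimate to bounding $J(t)=\int_0^t\|u^2_0(s)\|_{L^2}\,ds$ via the Duhamel formula \eqref{u10}, and both split $u^2_0=P_{l\neq0}u^2_0+u^2_{00}$, handling the first piece with the exponential decay \eqref{decay1} to get the $\mu^{-1}$ factor and the second with the slow algebraic decay of Lemma~\ref{lem-00} to get the $(\lm+2\mu)^{-1/4}\la t\ra^{3/4}$ growth. The one place you genuinely diverge is the bound on $\mu^{1/2}\|\nb u^1_0\|_{L^2L^2}$. The paper stays inside the Duhamel representation, splitting it as $I+II$, estimating $I$ via Plancherel and $II$ via Minkowski's inequality in time, which yields $II\le\tfrac{1}{\sqrt 2}J(t)$. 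You instead test $\eqref{Li-0}_3$ against $u^1_0$ to get the energy identity $\tfrac12\tfrac{d}{dt}\|u^1_0\|^2_{L^2}+\mu\|\nb u^1_0\|^2_{L^2}=-\la u^2_0,u^1_0\ra$, then bootstrap the already-obtained pointwise $L^2$ bound to close the quadratic. Both arrive at $\|u^1_0(t)\|_{L^2}+\mu^{1/2}\|\nb u^1_0\|_{L^2L^2}\lesssim\|(u^1_{\mathrm{in}})_0\|_{L^2}+J(t)$; your route is somewhat more elementary (no Minkowski step, no explicit heat-kernel decay for the gradient), while the paper's keeps everything in one Duhamel computation. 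A minor remark shared by both proofs: the constant multiplying $\|(u^1_{\mathrm{in}})_0\|_{L^2}$ that actually comes out is slightly larger than $1$ (e.g.\ $1+\tfrac{1}{\sqrt 2}$ in the paper's split), so the $1$ in \eqref{liftup} should really be an absolute constant, but this is a cosmetic issue in the statement rather than a gap in either argument. Your closing observation — that the one-dimensional mode $u^2_{00}$, absent in the incompressible setting by the divergence-free constraint, is exactly what drives the $\la t\ra^{3/4}$ growth — is the key conceptual point the paper makes as well.
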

\begin{proof}
From \eqref{u10}, \eqref{decay1} and \eqref{de1}, we find that
\beq
\nn\|u^1_0(t)\|_{L^2}&\le&\|e^{\mu t\Dl}(u^1_{\mathrm{in}})_0\|_{L^2}+\int_0^t\|e^{\mu (t-s)\Dl}P_{l\neq0}u^2_0(s)\|_{L^2}+\|u^2_{00}(s)\|_{L^2}ds\\
\nn&\le&\|(u^1_{\mathrm{in}})_0\|_{L^2}+\int_0^te^{-\mu (t-s)}ds\|P_{l\neq0}u^2_0(s)\|_{L^\infty L^2}\\
\nn&&+C\int_0^t\big((\lm+2\mu)(1+s)\big)^{-\fr14}ds\left\|\left(\fr{({b}_{\mathrm{in}})_{00}}{\eps},({u}^2_{\mathrm{in}})_{00}\right)\right\|_{L^2\cap L^1}\\
\nn&\le&\|(u^1_{\mathrm{in}})_0\|_{L^2}+C\mu^{-1}\left(\left\|\fr{P_{l\neq0}(b_{\mathrm{in}})_0}{\eps}\right\|_{L^2}+\|P_{l\neq0}(\tl{u}_{\mathrm{in}})_0\|_{L^2}\right)\\
\nn&&+C(\lm+2\mu)^{-\fr14}\la t\ra^{\fr34}\left\|\left(\fr{({b}_{\mathrm{in}})_{00}}{\eps},({u}^2_{\mathrm{in}})_{00}\right)\right\|_{L^2\cap L^1},
\eeq
and
\beno
\mu^\fr12\|\nb u^1_0\|_{L^2L^2}\le \mu^\fr12\left\|\nb\left(e^{\mu t\Dl}(u^1_{\mathrm{in}})_0\right)\right\|_{L^2L^2}+\mu^\fr12\left\|\int_0^\tau\nb \left(e^{\mu(\tau-s)\Dl}u^2_0(s)\right)ds\right\|_{L^2L^2}=:I+II.
\eeno
Using Plancherel's theorem, we have
\beqno
I=\left(\sum_{l\in\mathbb{Z}}\int_{\mathbb{R}}\int_0^t\mu(\eta^2+l^2)e^{-2\mu s(\eta^2+l^2)}ds \left|(\hat{u}^1_{\mathrm{in}})_0\right|^2d\eta\right)^\fr12\le\fr{1}{\sqrt{2}}\|(u^1_{\mathrm{in}})_0\|_{L^2}.
\eeqno
In view of Minkowski's inequality,  and employing  \eqref{decay1} and \eqref{de1} again, one deduces that
\beq
\nn II&\le&\left\{\int_0^t\left[\int_0^\tau\left(\sum_{l\in\mathbb{Z}}\int_{\mathbb{R}}\mu(\eta^2+l^2)e^{-2\mu(\tau-s)(\eta^2+l^2)}\left|\hat{u}^2_0(s)\right|^2d\eta\right)^{\fr12}ds\right]^2d\tau\right\}^\fr12\\
\nn&\le&\int_0^t\left(\int_s^t\sum_{l\in\mathbb{Z}}\int_{\mathbb{R}}\mu(\eta^2+l^2)e^{-2\mu(\tau-s)(\eta^2+l^2)}\left|\hat{u}^2_0(s)\right|^2d\eta d\tau\right)^{\fr12}ds\\
\nn&\le&\fr{1}{\sqrt{2}}\int_0^t\|u^2_0(s)\|_{L^2}ds\\
\nn&\le& C\mu^{-1}\left(\left\|\fr{P_{l\neq0}(b_{\mathrm{in}})_0}{\eps}\right\|_{L^2}+\|P_{l\neq0}(\tl{u}_{\mathrm{in}})_0\|_{L^2}\right)\\
\nn&&+(\lm+2\mu)^{-\fr14}\la t\ra^{\fr34}\left\|\left(\fr{({b}_{\mathrm{in}})_{00}}{\eps},({u}^2_{\mathrm{in}})_{00}\right)\right\|_{L^2\cap L^1}.
\eeq
We complete the proof of Proposition \ref{prop-liftup}.
\end{proof}
\section{Estimates of non-zero mode}
The purpose of this section is to obtain the enhanced dissipation of the non-zero mode of $(B^i, D^i, W^i)$ for $i=1, 2, 3$. More precisely, we will establish the following proposition.
\begin{prop}\label{prop-endis}
Let $s\ge0$. Assume that the Mach number $\eps$ and the viscous coefficients $\mu$ and $\lm$ satisfy \eqref{visco} and \eqref{mu}. Then there exists a constant $C$, depending on $\eps$, but independent of  $\lm$ and $\mu$, such that
\beq\label{enhan13}
\nn&&\sum_{i=1,3}\left(\left\|m^{-\fr34}\sqrt{-\Dl_L}\frac{B^i_\neq}{\eps}\right\|^2_{H^s}+\|m^{-\fr34}D^i_\neq\|^2_{H^s}\right)+\|m^{-\fr34}W^2_\neq\|^2_{H^s}+\|m^{-1}W^3_\neq\|^2_{H^s}\\
&\le&C e^{-\fr{\mu^\fr13t}{22}}\Bigg(\left\|\fr{(\nb{\nb_{x,z} b_{\mathrm{in}}})_{\neq}}{\eps}\right\|^2_{H^s}+\left\|(\nb_{x,z} \dv u_{\mathrm{in}})_{\neq}\right\|^2_{H^s}+\|(w^2_{\mathrm{in}})_\neq+\pr_xb_{\mathrm{in}}\|^2_{H^s}+\|(w^3_{\mathrm{in}})_\neq\|^2_{H^s}\Bigg),
\eeq
and
\beq\label{enhan2}
\nn&&\left\|m^{-\fr34}\sqrt{-\Dl_L}\frac{B^2_\neq}{\eps}\right\|^2_{H^s}+\|m^{-\fr34}D^2_\neq\|^2_{H^s}+\|m^{-1}W^1_\neq\|^2_{H^s}\\
\nn&\le&C e^{-\fr{\mu^\fr13t}{44}}\Bigg\{\mu^{-\fr23}\Bigg(\left\|\fr{(\nb{\nb_{x,z} b_{\mathrm{in}}})_{\neq}}{\eps}\right\|^2_{H^s}+\left\|(\nb_{x,z} \dv u_{\mathrm{in}})_{\neq}\right\|^2_{H^s}+\|(w^2_{\mathrm{in}})_\neq+\pr_xb_{\mathrm{in}}\|^2_{H^s}+\|(w^3_{\mathrm{in}})_\neq\|^2_{H^s}\Bigg)\\
&&+\Bigg(\left\|\fr{(\nb{\pr_{y} b_{\mathrm{in}}})_{\neq}}{\eps}\right\|^2_{H^s}+\left\|(\pr_{y} \dv u_{\mathrm{in}})_{\neq}\right\|^2_{H^s}+\|(w^1_{\mathrm{in}})_\neq-(\pr_yb_{\mathrm{in}})_\neq\|^2_{H^s}\Bigg)\Bigg\}.
\eeq
\end{prop}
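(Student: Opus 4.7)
The plan is to work in the shifted coordinates \eqref{ch-co} and in Fourier variables, building a weighted energy functional for each of the three subsystems \eqref{Lx'}, \eqref{Lz'}, \eqref{Ly'} using the multipliers from Definitions \ref{def-m} and \ref{def-adm}. The toy-model derivation \eqref{par-dis}--\eqref{full-dis-re} in Section 2 is the blueprint: for each system I combine $|\sqrt{p}\,\widehat{B^i}/\eps|^2$, $|\widehat{D^i}|^2$, $|\widehat{W^i}|^2$ with a small Matsumura--Nishida cross term $-c\,\mathfrak{Re}(\widehat{D^i}\,\overline{\widehat{B^i}})$ of size $c=\delta\mu^{1/3}$, and then weight the whole functional by $m^{-3/2}$ (to absorb the stretching $-p'/p$), by $m_1$ or $m_3$ (to absorb the linear coupling between divergence and the incompressible components), and by $m_2$ (to produce the enhanced-dissipation rate $\mu^{1/3}$ through \eqref{claim}). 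The stronger weight $m^{-2}$ sitting in front of $W^3$ and $W^1$ reflects the additional stretching in $\eqref{Lz'}_3$ and $\eqref{Ly'}_3$ that is absent from the $W^2$ equation.

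\textbf{Systems \eqref{Lx'} and \eqref{Lz'}.} I would treat these in parallel first, since they share the same structure and they feed \eqref{Ly'} through the forcing $F, G, H$ but are not influenced by it. At fixed Fourier mode $(k,\eta,l)$ with $k\neq 0$, differentiating the weighted functional in time, the factor $m^{-3/2}$ cancels the bulk of the stretching $-\fr{p'}{p}$: on $|\widehat{D^i}|^2$ a residual of the form $\fr12\fr{p'}{p}|\widehat{D^i}|^2$ survives and is absorbed by the $\mu^{1/3}/2$ dissipation from $m_2$ via \eqref{claim}, while on $|\sqrt{p}\,\widehat{B^i}/\eps|^2$ the residual has the favorable sign and becomes additional damping. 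The coupling $2\pr_{XX}\Dl_L^{-1}(W^2-B^1)$ in $\eqref{Lx'}_2$ generates a term of order $(k^2/p)\,\widehat{D^1}\,\overline{(\widehat{W^2}-\widehat{B^1})}$ that is balanced by the $m_1$ growth \eqref{m1} upon choosing $N=O(\eps^2)$; the analogous $\pr_{XZ}$ coupling in $\eqref{Lz'}_2$ is handled by $m_3$. The dissipative source $\mu\Dl_L B^1$ in $\eqref{Lx'}_3$ is estimated via Cauchy--Schwarz against the cross-term damping $c|\sqrt{p}\,\widehat{B^1}/\eps|^2$ of order $\mu^{1/3}p|\widehat{B^1}/\eps|^2$, using the smallness conditions $\mu^{1/3}\eps\le 1/4$ and $\mu^{1/3}(\lm+2\mu)\eps^2\le 1$ from \eqref{mu}.

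\textbf{System \eqref{Ly'}.} Once \eqref{enhan13} is in hand, the same weighted-energy method is applied to \eqref{Ly'}, with $F, G, H$ from \eqref{FGH} treated as forcing terms. The homogeneous computation is essentially identical to that of \eqref{Lx'}. The forcing contributions are controlled by Cauchy--Schwarz and Duhamel, bounded pointwise in time by the quantities already estimated in \eqref{enhan13}. Closing an exponential inequality at the slower rate $e^{-\mu^{1/3}t/44}$ against a source that decays at rate $e^{-\mu^{1/3}t/22}$ costs a factor of order $\mu^{-1/3}$, which combined with the upper bound $m^{-1}\les\mu^{-2/3}$ from \eqref{m-1} accounts for the $\mu^{-2/3}$ loss appearing on the right-hand side of \eqref{enhan2}.

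\textbf{Main obstacle.} I expect the main difficulty to be the simultaneous calibration of the weights $m^{-3/4}, m^{-1}, m_1, m_2, m_3$ and the constants $c, N$ so that every cross-term and every coupling has a favorable sign at once. The most delicate step is the cancellation between the $\mu\Dl_L B^1$ forcing in $\eqref{Lx'}_3$ and the Matsumura--Nishida damping of $B^1$: the forcing produces a contribution of order $\mu p|\widehat{B^1}|$ while the damping only supplies $\mu^{1/3}p|\widehat{B^1}/\eps|^2$, so the thresholds $\mu^{1/3}\eps\le 1/4$ and $\mu^{1/3}(\lm+2\mu)\eps^2\le 1$ in \eqref{mu} must be used sharply. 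A secondary technical point is that in the sub-cases (ii), (iii.2), (iv.3) of Definition \ref{def-m} the identity $\pr_t m/m=p'/p$ no longer holds, and the stretching must instead be dominated directly by $\mu^{1/3}$ through the bound \eqref{vis-dom}; this is where the constant $64$ in the construction of $m$ enters decisively.
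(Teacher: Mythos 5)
Your overall skeleton is the right one (shifted coordinates, weighted Fourier energy with the multipliers $m, m_1, m_2, m_3$, the Matsumura--Nishida cross term, treat \eqref{Lx'} and \eqref{Lz'} first and then \eqref{Ly'} with $F,G,H$ as forcing), but there is a genuine gap in the central step: how the stretching $\tfrac{p'}{p}$ acting on $D^i$ is absorbed.

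You claim that after the weight $m^{-3/2}$ is differentiated, the residual $\sim\tfrac{p'}{p}|\widehat{D^i}|^2$ "is absorbed by the $\mu^{1/3}/2$ dissipation from $m_2$ via \eqref{claim}." This cannot work: in the resonant sub-cases (iii.1) and (iv.2) of Definition \ref{def-m}, $\tfrac{|p'|}{p}$ is of order $1$ (indeed $\tfrac{\partial_t m}{m}=\tfrac{p'}{p}$ there), whereas $\tfrac{\partial_t m_2}{m_2}$ only provides $O(\mu^{1/3})$ damping. The mismatch is precisely the difficulty, and the paper resolves it differently. The weight $m^{-3/4}$ (i.e.\ $m^{-3/2}$ on the squared norm, as you say) furnishes $\tfrac34\tfrac{\partial_t m}{m}$ of damping on each of $B^i$ and $D^i$, but the stretching contributes $\tfrac12\tfrac{\partial_t m}{m}$ to $|\sqrt{p}\,\widehat{B^i}/\eps|^2$ and a full $\tfrac{\partial_t m}{m}$ to $|\widehat{D^i}|^2$ — so $B^i$ ends up with a surplus $\tfrac14\tfrac{\partial_t m}{m}$ and $D^i$ with an uncontrolled deficit $\tfrac14\tfrac{\partial_t m}{m}$. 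To transfer the surplus from $B^i$ to $D^i$, the paper introduces a \emph{second} cross term $+2\langle gB^i,gD^i\rangle$ with $g^2=\tfrac14\tfrac{\partial_t m}{m}M^2$ (see \eqref{wei}, \eqref{var-en1}, \eqref{in-g'}); its time derivative produces exactly $+\tfrac14\|\sqrt{\tfrac{\partial_t m}{m}}M\sqrt{-\Delta_L}\tfrac{B^i}{\eps}\|^2-\tfrac14\|\sqrt{\tfrac{\partial_t m}{m}}MD^i\|^2$, which cancels both the $D^i$ deficit and the $B^i$ surplus. (This interplay between the exponent $\tfrac34$ and the coefficient $\tfrac14$ is flagged in the paper as coming from [ADM20].) Your proposed functional has only the $h$-weighted (Matsumura--Nishida) cross term and so does not close. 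For the same reason you also drop the auxiliary term $\|\sqrt{\tfrac{\partial_t m}{m}}MB^i\|_{L^2}^2$ in $E^1_s$, $E^2_s$ (from [ADM21]), which is what makes the remaining error terms like $\partial_t(\tfrac{\partial_t m}{m})$ in \eqref{E-corre} harmless for \emph{all} Mach numbers satisfying \eqref{mu}, not only $\eps\le 1$.

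A secondary issue: your accounting for the $\mu^{-2/3}$ loss in \eqref{enhan2} is arithmetically inconsistent — a Duhamel step costing $\mu^{-1/3}$ "combined with $m^{-1}\lesssim\mu^{-2/3}$" would give $\mu^{-1}$, not $\mu^{-2/3}$. The paper does not run a Duhamel argument at all: the forcing terms $F,G,H$ in \eqref{Ly'} are estimated by Cauchy--Schwarz at the price $\mu^{-1/3}$ (inequality \eqref{CS}), and then the two modified-energy inequalities \eqref{varE3} and \eqref{varE2-4} are added with the weight $c_0\mu^{2/3}$ on the second so that $c_0\mu^{2/3}\cdot\mu^{-1/3}=c_0\mu^{1/3}$ is absorbed by the $\tfrac{\mu^{1/3}}{32}E^1_s$ dissipation. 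The single Lyapunov functional $\mathcal{E}_s^1+c_0\mu^{2/3}\mathcal{E}_s^2$ then decays exponentially, and dividing by $c_0\mu^{2/3}$ gives the stated $\mu^{-2/3}$ loss. Your identification of the $\mu\Delta_L B^1$ forcing in $\eqref{Lx'}_3$ and the smallness conditions in \eqref{mu} needed to absorb it is, on the other hand, correct.
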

\begin{rem}\label{rem4.1}
Since the system \eqref{Lx} of $({\bf b}^1, {\bf d}^1, {\bf w}^2)$ is self-closed, if one just focus on the system \eqref{Lx}, the proof in subsection \ref{subsec-13}  below  actually gives a more accurate estimate on $({\bf b}^1, {\bf d}^1, {\bf w}^2)$:
\beq\label{enhan1}
\nn&&\left(\left\|m^{-\fr34}\sqrt{-\Dl_L}\frac{B^1_\neq}{\eps}\right\|^2_{H^s}+\|m^{-\fr34}D^1_\neq\|^2_{H^s}\right)+\|m^{-\fr34}W^2_\neq\|^2_{H^s}\\
&\le&C e^{-\fr{\mu^\fr13t}{22}}\left(\left\|\fr{\nb{\pr_{x} b_{\mathrm{in}}}}{\eps}\right\|^2_{H^s}+\left\|\pr_{x} \dv u_{\mathrm{in}}\right\|^2_{H^s}+\|(w^2_{\mathrm{in}})_\neq+\pr_xb_{\mathrm{in}}\|^2_{H^s}\right).
\eeq
Moreover,  one can see from system \eqref{Lz} that $({\bf b}^3, {\bf d}^3)$ and $w^3$ are linearly independent of each other, but both depend on ${\bf b}^1$ and ${\bf w}^2$. Thus $({\bf b}^1, {\bf d}^1, {\bf w}^2, {\bf b}^3, {\bf d}^3)$ and $({\bf b}^1, {\bf d}^1, {\bf w}^2, w^3)$ can be bounded separately. As for  the system \eqref{Ly}, it is  clear that $({\bf b}^2, {\bf d}^2, {\bf w}^1)$ does not depend on $({\bf b}^3, {\bf d}^3, w^3)$. Therefore, in order to get the bounds for $(B^2, D^2, W^1)$,  $(B^3, D^3, W^3)$ is not necessarily involved in subsection \ref{subsec-4.2}. Accordingly, the derivatives of the initial data on the $z$-direction and $w^3_{\mathrm{in}}$ can be removed on the right hand side of \eqref{enhan2}.
\end{rem}

Before proceeding any further, let us denote
\be\label{wei}
\begin{split}
M:=&\la k, \eta, l\ra^sm^{-\fr34}m_1^{-1}m_2^{-1}m_3^{-1}{\bf 1}_{k\neq0},\\
{M}_{1}:=&\la k, \eta, l\ra^sm^{-1}m_1^{-1}m_2^{-1}m_3^{-1}{\bf 1}_{k\neq0},\\
h^2:=&cM^2,\\
g^2:=&\fr14\fr{\pr_tm}{m}M^2,
\end{split}
\ee
with  the constant $N$ appearing in the definitions of $m_1$ and $m_3$ (see Definition \ref{def-adm}) and the constant $c$ above to be determined below. The power $\fr34$ of $m^{-1}$ in $M$ and the factor $\fr14$ in $g^2$  are closely related to each other, which was first observed in \cite{ADM20} for the 2D inviscid flow. The proof of Proposition \ref{prop-endis} will be achieved in the following two subsections.
\subsection{Estimates of $(B^1, D^1, W^2)$ and $(B^3, D^3, W^3)$}\label{subsec-13}
Define
\be\label{en1}
E_s^1(t):=\sum_{i=1,3}\left(\left\|M\sqrt{-\Dl_L}\frac{B^i}{\eps}\right\|^2_{L^2}+\left\|\sqrt{\fr{\pr_tm}{m}}MB^i\right\|_{L^2}^2+\|MD^i\|^2_{L^2}\right)+\|MW^2\|^2_{L^2}+\|M_1W^3\|^2_{L^2},
\ee
and
\be\label{var-en1}
\mathcal{E}_s^1(t):=E_s^1(t)+2\sum_{i=1,3}\left(\la gB^i, gD^i\ra-\la hB^i, hD^i\ra\right).
\ee
We would like to remark that if the Mach number $\eps\in(0, 1]$, the second term in \eqref{en1}, first introduced by Antonelli, Dolce and Marcati in \cite{ADM21}, is not needed.\par
\noindent{\bf Step (I): Estimates of the the functional $E_s^1(t)$.}
First of all from \eqref{Lx'} and \eqref{Lz'}, we find that for $i=1, 3$, there hold
\beq\label{evo-nbB}
\fr12\fr{d}{dt}\left\|M\sqrt{-\Dl_L}\frac{B^i}{\eps}\right\|^2_{L^2}
\nn&=&-\fr34\left\|\sqrt{\fr{\pr_tm}{m}}M\sqrt{-\Dl_L}\frac{B^i}{\eps}\right\|^2_{L^2}-\sum_{j=1,2,3}\left\|\sqrt{\fr{\pr_tm_j}{m_j}}M\sqrt{-\Dl_L}\frac{B^i}{\eps}\right\|^2_{L^2}\\
&&+\fr12\sum_{k,l}\int_\mathbb{R}M^2p'\left|\frac{\hat{B^i}}{\eps}\right|^2d\eta+\frac{1}{\eps^2}\left\la M\Dl_LB^i, MD^i\right\ra,
\eeq
and
\beq
\fr{1}{2}\fr{d}{dt}\|MD^i\|^2_{L^2}
\nn&=&-\fr34\left\|\sqrt{\fr{\pr_tm}{m}}MD^i\right\|^2_{L^2}-\sum_{j=1,2,3}\left\|\sqrt{\fr{\pr_tm_j}{m_j}}MD^i\right\|^2_{L^2}\\
\nn&&-(\lm+2\mu)\left\|M\sqrt{-\Dl_L}D^i\right\|^2_{L^2}+\sum_{k, l}\int_\R M^2\fr{p'}{p}|\hat{D}^i|^2d\eta\\
&&-\frac{1}{\eps^2}\left\la M\Dl_LB^i, MD^i\right\ra-\begin{cases}
\displaystyle2\sum_{k, l}\int_\R M^2\fr{k^2}{p}(\hat{W}^2-\hat{B}^1)\bar{\hat{D}}^1d\eta, \quad{\rm if}\quad i=1,\\[2mm]
\displaystyle2\sum_{k, l}\int_\R M^2\fr{kl}{p}(\hat{W}^2-\hat{B}^1)\bar{\hat{D}}^3d\eta,\quad{\rm if}\quad i=3.
\end{cases}
\eeq
Similar to \eqref{evo-nbB}, we get the evolution of $\left\|\sqrt{\fr{\pr_tm}{m}}MB^i\right\|_{L^2}^2$,
\beq\label{E-corre}
\fr12\fr{d}{dt}\left\|\sqrt{\fr{\pr_tm}{m}}MB^i\right\|^2_{L^2}\nn&=&-\fr34\left\|{\fr{\pr_tm}{m}}M{B^i}\right\|^2_{L^2}-\sum_{j=1,2,3}\left\|\sqrt{\fr{\pr_tm}{m}\fr{\pr_tm_j}{m_j}}M{B^i}\right\|^2_{L^2}\\
&&+\fr12\sum_{k,l}\int_{\mathbb{R}}\pr_t\left(\fr{\pr_tm}{m}\right)M^2|\hat{B}^i|^2d\eta-\sum_{k,l}\int_{\mathbb{R}}\fr{\pr_tm}{m}M^2\hat{B}^i\bar{\hat{D}}^id\eta.
\eeq
The evolutions of the last two terms in the energy functional $E_s^1(t)$ are given as follows:
\beq\label{e-W2}
\fr{1}{2}\fr{d}{dt}\|MW^2\|^2_{L^2}
\nn&=&-\fr34\left\|\sqrt{\fr{\pr_tm}{m}}MW^2\right\|^2_{L^2}-\sum_{j=1,2,3}\left\|\sqrt{\fr{\pr_tm_j}{m_j}}MW^2\right\|^2_{L^2}\\
&&-\mu\left\|M\sqrt{-\Dl_L}W^2\right\|^2_{L^2}+\mu\left\la M\sqrt{-\Dl_L}B^1, M\sqrt{-\Dl_L}W^2\right\ra,
\eeq
and
\beq\label{evo-W3}
\fr{1}{2}\fr{d}{dt}\|M_1W^3\|^2_{L^2}
\nn&=&-\left\|\sqrt{\fr{\pr_tm}{m}}M_1W^3\right\|^2_{L^2}-\sum_{j=1,2,3}\left\|\sqrt{\fr{\pr_tm_j}{m_j}}M_1W^3\right\|^2_{L^2}-\mu\left\|M_1\sqrt{-\Dl_L}W^3\right\|^2_{L^2}\\
&&+\sum_{k, l}\int_\R M_1^2\fr{p'}{p}|\hat{W}^3|^2d\eta+2\sum_{k, l}\int_\R M_1^2\fr{kl}{p}(\hat{W}^2-\hat{B}^1)\bar{\hat{W}}^3d\eta.
\eeq
From \eqref{evo-nbB}--\eqref{evo-W3}, one deduces the evolution of the energy functional $E_s^1(t)$,
\beq\label{e1}
\nn&&\fr{1}{2}\fr{d}{dt}E_s^1(t)+(\lm+2\mu)\sum_{i=1,3}\left\|M\sqrt{-\Dl_L}D^i\right\|^2_{L^2}+\mu\left\|M\sqrt{-\Dl_L}W^2\right\|^2_{L^2}+\mu\left\|M_1\sqrt{-\Dl_L}W^3\right\|^2_{L^2}\\
\nn&&+\fr34\sum_{i=1,3}\left\|{\fr{\pr_tm}{m}}M{B^i}\right\|^2_{L^2}+\sum_{i=1,3}\sum_{j=1,2,3}\left\|\sqrt{\fr{\pr_tm}{m}\fr{\pr_tm_j}{m_j}}M{B^i}\right\|^2_{L^2}\\
\nn&&+\fr34\sum_{i=1,3}\left\|\sqrt{\fr{\pr_tm}{m}}M\sqrt{-\Dl_L}\fr{B^i}{\eps}\right\|^2_{L^2}+\sum_{i=1,3}\sum_{j=1,2,3}\left\|\sqrt{\fr{\pr_tm_j}{m_j}}M\sqrt{-\Dl_L}\fr{B^i}{\eps}\right\|^2_{L^2}\\
\nn&&+\fr34\sum_{i=1,3}\left\|\sqrt{\fr{\pr_tm}{m}}M{D^i}\right\|^2_{L^2}+\sum_{i=1,3}\sum_{j=1,2,3}\left\|\sqrt{\fr{\pr_tm_j}{m_j}}M{D}^i\right\|^2_{L^2}\\
\nn&&+\fr34\left\|\sqrt{\fr{\pr_tm}{m}}M{W^2}\right\|^2_{L^2}+\sum_{j=1,2,3}\left\|\sqrt{\fr{\pr_tm_j}{m_j}}MW^2\right\|^2_{L^2}\\
\nn&&+\left\|\sqrt{\fr{\pr_tm}{m}}M_1W^3\right\|^2_{L^2}+\sum_{j=1,2,3}\left\|\sqrt{\fr{\pr_tm_j}{m_j}}M_1W^3\right\|^2_{L^2}\\
&=&\fr12\sum_{i=1,3}\sum_{k, l}\int_\mathbb{R}M^2p'\left|\fr{\hat{B}^i}{\eps}\right|^2d\eta+\sum_{i=1,3}\sum_{k,l}\int_\R M^2\fr{p'}{p}|\hat{D}^i|^2d\eta+\sum_{k, l}\int_\R M_1^2\fr{p'}{p}|\hat{W}^3|^2d\eta\\
\nn&&-2\sum_{k, l}\int_\R M^2\fr{k^2}{p}(\hat{W}^2-\hat{B}^1)\bar{\hat{D}}^1d\eta-2\sum_{k, l}\int_\R M^2\fr{kl}{p}(\hat{W}^2-\hat{B}^1)\bar{\hat{D}}^3d\eta\\
\nn&&+2\sum_{k, l}\int_\R M_1^2\fr{kl}{p}(\hat{W}^2-\hat{B}^1)\bar{\hat{W}}^3d\eta+\mu\left\la M\sqrt{-\Dl_L}B^1, M\sqrt{-\Dl_L}W^2\right\ra\\
\nn&&+\fr12\sum_{i=1,3}\sum_{k,l}\int_{\mathbb{R}}\pr_t\left(\fr{\pr_tm}{m}\right)M^2|\hat{B}^i|^2d\eta-\sum_{i=1,3}\sum_{k,l}\int_{\mathbb{R}}\fr{\pr_tm}{m}M^2\hat{B}^i\bar{\hat{D}}^id\eta.
\eeq
Now we go to bound the right hand side of \eqref{e1}. Firstly, we infer from \eqref{vis-dom}--\eqref{p'p} in Remark \ref{rem-m} that
\beq\label{role-m}
\nn&&\fr12\sum_{i=1,3}\sum_{k, l}\int_\mathbb{R}M^2p'\left|\fr{\hat{B}^i}{\eps}\right|^2d\eta+\sum_{i=1,3}\sum_{k,l}\int_\R M^2\fr{p'}{p}|\hat{D}^i|^2d\eta+\sum_{k, l}\int_\R M_1^2\fr{p'}{p}|\hat{W}^3|^2d\eta\\
&\le&\sum_{i=1,3}\left(\fr12\left\|\sqrt{\fr{\pr_tm}{m}}M\sqrt{-\Dl_L}\fr{B^i}{\eps}\right\|^2_{L^2}+\left\|\sqrt{\fr{\pr_tm}{m}}M{D}^i\right\|^2_{L^2}\right)+\left\|\sqrt{\fr{\pr_tm}{m}}M_1{W}^3\right\|^2_{L^2}\\
&&\nn+{\fr{\mu^\fr13}{32}\left(\sum_{i=1,3}\left(\fr12\left\|M\sqrt{-\Dl_L}\fr{B^i}{\eps}\right\|^2_{L^2}+\left\|MD^i\right\|^2_{L^2}\right)+\left\|M_1W^3\right\|^2_{L^2}\right)}.
\eeq
Recalling the definitions of $m^1$ and $m^3$, and using the fact $M_1\le M$, we are led to
\beq
\nn&&-2\sum_{k, l}\int_\R M^2\fr{k^2}{p}(\hat{W}^2-\hat{B}^1)\bar{\hat{D}}^1d\eta-2\sum_{k, l}\int_\R M^2\fr{kl}{p}(\hat{W}^2-\hat{B}^1)\bar{\hat{D}}^3d\eta\\
\nn&&+2\sum_{k, l}\int_\R M_1^2\fr{kl}{p}(\hat{W}^2-\hat{B}^1)\bar{\hat{W}}^3d\eta\\
\nn&\le&\fr{2}{N}\left(\left\|\sqrt{\fr{\pr_tm_1}{m_1}}MW^2\right\|_{L^2}+\eps\left\|\sqrt{\fr{\pr_tm_1}{m_1}}M\fr{B^1}{\eps}\right\|_{L^2}\right)\left\|\sqrt{\fr{\pr_tm_1}{m_1}}MD^1\right\|_{L^2}\\
\nn&&+\fr{2}{N}\left(\left\|\sqrt{\fr{\pr_tm_3}{m_3}}MW^2\right\|_{L^2}+\eps\left\|\sqrt{\fr{\pr_tm_3}{m_3}}M\fr{B^1}{\eps}\right\|_{L^2}\right)\left\|\sqrt{\fr{\pr_tm_3}{m_3}}MD^3\right\|_{L^2}\\
\nn&&+\fr{2}{N}\left(\left\|\sqrt{\fr{\pr_tm_3}{m_3}}M_1W^2\right\|_{L^2}+\eps\left\|\sqrt{\fr{\pr_tm_3}{m_3}}M_1\fr{B^1}{\eps}\right\|_{L^2}\right)\left\|\sqrt{\fr{\pr_tm_3}{m_3}}M_1W^3\right\|_{L^2}\\
&\le&\fr{2}{N}\sum_{j=1,3}\left\|\sqrt{\fr{\pr_tm_j}{m_j}}MW^2\right\|_{L^2}^2+\fr{2\eps}{N}\sum_{j=1,3}\left\|\sqrt{\fr{\pr_tm_j}{m_j}}M\fr{B^1}{\eps}\right\|_{L^2}^2\\
\nn&&+\fr{1+\eps}{N}\left(\sum_{j=1,3}\left\|\sqrt{\fr{\pr_tm_j}{m_j}}MD^j\right\|_{L^2}^2+\left\|\sqrt{\fr{\pr_tm_3}{m_3}}M_1W^3\right\|_{L^2}^2\right).
\eeq
Next, by the definition of $m$ and $m_1$, thanks to the fact $\fr{|p'|}{p}\le\fr{2|k|}{\sqrt{p}}$, one deduces that
\be\label{wtw}
0\le\fr{\pr_tm}{m}\le\fr{|p'|}{p}\le1,
\ee

\be\label{wtwp}
0\le\fr{\pr_tm}{m\sqrt{p}}\le\fr{|p'|}{p^{\fr32}}\le\fr{2|k|}{p}\le\fr{2}{N}\fr{\pr_tm_1}{m_1},
\ee
and
\be\label{ptp'}
\begin{split}
\left|\pr_t\left(\fr{\pr_tm}{m}\right)\right|\le\fr{2k^2}{p}+\fr{|p'|^2}{p^2}\le\fr{6k^2}{p}\le\fr{6}{N}\fr{\pr_tm_1}{m_1}.
\end{split}
\ee
Consequently,
\be\label{Bi1}
\fr12\sum_{i=1,3}\sum_{k,l}\int_{\mathbb{R}}\pr_t\left(\fr{\pr_tm}{m}\right)M^2|\hat{B}^i|^2d\eta\le\fr{3\eps^2}{N}\sum_{i=1,3}\left\|\sqrt{\fr{\pr_tm_1}{m_1}}M\sqrt{-\Dl_L}\fr{B^i}{\eps}\right\|^2_{L^2},
\ee
and
\beq\label{Bi2}
\nn-\sum_{i=1,3}\sum_{k,l}\int_{\mathbb{R}}\fr{\pr_tm}{m}M^2\hat{B}^i\bar{\hat{D}}^id\eta&\le&\fr{2\eps}{N}\sum_{i=1,3}\left\|\sqrt{\fr{\pr_tm_1}{m_1}}M\sqrt{-\Dl_L}\fr{B^i}{\eps}\right\|_{L^2}\left\|\sqrt{\fr{\pr_tm_1}{m_1}}M{D^i}\right\|_{L^2}\\
&\le&\fr{\eps}{N}\sum_{i=1,3}\left(\left\|\sqrt{\fr{\pr_tm_1}{m_1}}M\sqrt{-\Dl_L}\fr{B^i}{\eps}\right\|_{L^2}^2+\left\|\sqrt{\fr{\pr_tm_1}{m_1}}M{D^i}\right\|_{L^2}^2\right).
\eeq
Finally, by virtue of the Cauchy-Schwarz inequality,  we have
\be
{\mu\left\la M\sqrt{-\Dl_L}B^1, M\sqrt{-\Dl_L}W^2\right\ra\le\fr{\mu}{2}\|M\sqrt{-\Dl_L}W^2\|^2_{L^2}+\fr{\mu\eps^2}{2}\left\|M\sqrt{-\Dl_L}\fr{B^1}{\eps}\right\|^2_{L^2}}.
\ee
Substituting the above  inequalities into \eqref{e1} yields
\beq\label{Es}
\nn&&\fr{1}{2}\fr{d}{dt}E_s^1(t)+(\lm+2\mu)\sum_{i=1,3}\left\|M\sqrt{-\Dl_L}D^i\right\|^2_{L^2}+\fr{\mu}{2}\left\|M\sqrt{-\Dl_L}W^2\right\|^2_{L^2}\\
\nn&&+\mu\left\|M_1\sqrt{-\Dl_L}W^3\right\|^2_{L^2}+\fr14\sum_{i=1,3}\left\|\sqrt{\fr{\pr_tm}{m}}M\sqrt{-\Dl_L}\fr{B^i}{\eps}\right\|^2_{L^2}+\fr34\left\|\sqrt{\fr{\pr_tm}{m}}M{W^2}\right\|^2_{L^2}\\
\nn&&+\fr34\sum_{i=1,3}\left\|{\fr{\pr_tm}{m}}M{B^i}\right\|^2_{L^2}+\sum_{i=1,3}\sum_{j=1,2,3}\left\|\sqrt{\fr{\pr_tm}{m}\fr{\pr_tm_j}{m_j}}M{B^i}\right\|^2_{L^2}\\
\nn&&+\sum_{i=1,3}\left(\left\|\sqrt{\fr{\pr_tm_2}{m_2}}M\sqrt{-\Dl_L}\fr{B^i}{\eps}\right\|^2_{L^2}+\left\|\sqrt{\fr{\pr_tm_2}{m_2}}M{D}^i\right\|^2_{L^2}\right)\\
&&+\left\|\sqrt{\fr{\pr_tm_2}{m_2}}MW^2\right\|^2_{L^2}+\left\|\sqrt{\fr{\pr_tm_2}{m_2}}M_1W^3\right\|^2_{L^2}\\
\nn&&+(1-\fr{\eps_1}{N})\sum_{i=1,3}\sum_{j=1,3}\left(\left\|\sqrt{\fr{\pr_tm_j}{m_j}}M\sqrt{-\Dl_L}\fr{B^i}{\eps}\right\|^2_{L^2}+\left\|\sqrt{\fr{\pr_tm_j}{m_j}}M{D}^i\right\|^2_{L^2}\right)\\
\nn&&+(1-\fr{\eps_1}{N})\sum_{j=1,3}\left(\left\|\sqrt{\fr{\pr_tm_j}{m_j}}MW^2\right\|^2_{L^2}+\left\|\sqrt{\fr{\pr_tm_j}{m_j}}M_1W^3\right\|^2_{L^2}\right)\\
\nn&\le&{\fr{\mu^\fr13}{32}\left(\sum_{i=1,3}\left(\fr12\left\|M\sqrt{-\Dl_L}\fr{B^i}{\eps}\right\|^2_{L^2}+\left\|MD^i\right\|^2_{L^2}\right)+\left\|M_1W^3\right\|^2_{L^2}\right)}\\
\nn&&{+\fr{\mu\eps^2}{2}\left\|M\sqrt{-\Dl_L}\fr{B^1}{\eps}\right\|^2_{L^2}}+\fr14\sum_{i=1,3}\left\|\sqrt{\fr{\pr_tm}{m}}M{D^i}\right\|^2_{L^2},
\eeq
where 
\be\label{eps1}
\eps_1:=\max\left\{2, 1+2\eps, 3\eps+3\eps^2\right\}.
\ee
\noindent{\bf Step (II): Estimates of the modified energy functional $\mathcal{E}_s^1(t)$.}
To this end, we turn to derive the evolutions of the cross terms appearing in the modified energy functional $\mathcal{E}_s^1(t)$. Note  first that
\be\label{h2t}
\pr_t(h^2)=-2c\sum_{j=1,2,3}\fr{\pr_tm_j}{m_j}M^2-\fr{3c}{2}\fr{\pr_tm}{m}M^2,
\ee
and
\be\label{g2t}
\begin{split}
\pr_t(g^2)=&-\fr12\sum_{j=1,2,3}\fr{\pr_tm}{m}\fr{\pr_tm_j}{m_j}M^2-\fr38\left(\fr{\pr_tm}{m}\right)^2M^2+\fr14\pr_t\left(\fr{\pr_tm}{m}\right)M^2.
\end{split}
\ee
Then we have
\beq\label{in-h'}
\fr{d}{dt}\la hB^i, hD^i\ra\nn&=&c\left\|M\sqrt{-\Dl_L}\fr{B^i}{\eps}\right\|^2_{L^2}-c\|MD^i\|^2_{L^2}\\
\nn&&-c(\lm+2\mu)\sum_{k,l}\int_\R pM^2\hat{B}^i\bar{\hat{D}}^id\eta+c\sum_{k,l}\int_\R M^2\fr{p'}{p}\hat{B}^i\bar{\hat{D}}^id\eta\\
\nn&&-2c\sum_{j=1,2,3}\sum_{k,l}\int_\R M^2\fr{\pr_tm_j}{m_j}\hat{B}^i\bar{\hat{D}}^id\eta-\fr{3c}{2}\sum_{k,l}\int_\R M^2\fr{\pr_tm}{m}\hat{B}^i\bar{\hat{D}}^id\eta\\
&&-\begin{cases}
\displaystyle2c\sum_{k,l}\int_\R M^2\fr{k^2}{p}\hat{B}^1(\bar{\hat{W}}^2-\bar{\hat{B}}^1)d\eta,\quad{\rm if}\quad i=1,\\[2mm]
\displaystyle2c\sum_{k,l}\int_\R M^2\fr{kl}{p}\hat{B}^3(\bar{\hat{W}}^2-\bar{\hat{B}}^1)d\eta, \quad{\rm if}\quad i=3,
\end{cases}
\eeq
and
\beq\label{in-g'}
\nn&&\fr{d}{dt}\la gB^i, gD^i\ra\\
\nn&=&\fr14\left\|\sqrt{\fr{\pr_tm}{m}}M\sqrt{-\Dl_L}\fr{B^i}{\eps}\right\|^2_{L^2}-\fr14\left\|\sqrt{\fr{\pr_tm}{m}}MD^i\right\|^2_{L^2}\\
\nn&&-\fr{\lm+2\mu}{4}\sum_{k,l}\int_\R p\fr{\pr_tm}{m}M^2\hat{B}^i\bar{\hat{D}}^id\eta+\fr14\sum_{k,l}\int_\R \fr{\pr_tm}{m}M^2\fr{p'}{p}\hat{B}^i\bar{\hat{D}}^id\eta\\
\nn&&-\fr12\sum_{j=1,2,3}\sum_{k,l}\int_\R \fr{\pr_tm}{m}\fr{\pr_tm_j}{m_j}M^2\hat{B}^i\bar{\hat{D}}^id\eta-\fr38\sum_{k,l}\int_\R \left(\fr{\pr_tm}{m}\right)^2M^2\hat{B}^i\bar{\hat{D}}^id\eta\\
&&+\fr14\sum_{k,l}\int_\R\pr_t\left(\fr{\pr_tm}{m}\right)M^2\hat{B}^i\bar{\hat{D}}^id\eta-\begin{cases}
\displaystyle\fr12\sum_{k,l}\int_\R \fr{\pr_tm}{m}M^2\fr{k^2}{p}\hat{B}^1(\bar{\hat{W}}^2-\bar{\hat{B}}^1)d\eta,\quad{\rm if}\quad i=1,\\[2mm]
\displaystyle\fr12\sum_{k,l}\int_\R \fr{\pr_tm}{m}M^2\fr{kl}{p}\hat{B}^3(\bar{\hat{W}}^2-\bar{\hat{B}}^1)d\eta, \quad{\rm if}\quad i=3.
\end{cases}
\eeq
Using the Cauchy-Schwarz inequality and \eqref{wtw}, we obtain
\beq\label{role-mj}
2c\sum_{j=1,2,3}\sum_{k,l}\int_\R M^2\fr{\pr_tm_j}{m_j}\hat{B}^i\bar{\hat{D}}^id
\nn&\le&2c\sum_{j=1,2,3}\left\|\sqrt{\fr{\pr_tm_j}{m_j}}MB^i\right\|_{L^2}\left\|\sqrt{\fr{\pr_tm_j}{m_j}}MD^i\right\|_{L^2}\\
&\le&c\eps\sum_{j=1,2,3}\left(\left\|\sqrt{\fr{\pr_tm_j}{m_j}}M\sqrt{-\Dl_L}\fr{B^i}{\eps}\right\|_{L^2}^2+\left\|\sqrt{\fr{\pr_tm_j}{m_j}}MD^i\right\|_{L^2}^2\right),
\eeq
and
\beq
\nn&&{-\fr12\sum_{j=1,2,3}\sum_{k,l}\int_\R \fr{\pr_tm}{m}\fr{\pr_tm_j}{m_j}M^2\hat{B}^i\bar{\hat{D}}^id\eta}\\
\nn&\le&\sum_{j=1,2,3}\left(\left\|\sqrt{\fr{\pr_tm}{m}\fr{\pr_tm_j}{m_j}}MB^i\right\|^2_{L^2}+\fr{1}{16}\left\|\sqrt{\fr{\pr_tm_j}{m_j}}MD^i\right\|^2_{L^2}\right).
\eeq
Moreover, in view of \eqref{wtw}--\eqref{ptp'}, we find that
\beq\label{role-m1}
\nn&&\fr{3c}{2}\sum_{k,l}\int_\R M^2\fr{\pr_tm}{m}\hat{B}^i\bar{\hat{D}}^id\eta-c\sum_{k,l}\int_\R M^2\fr{p'}{p}\hat{B}^i\bar{\hat{D}}^id\eta\\
\nn&&-\fr38\sum_{k,l}\int_\R M^2\left(\fr{\pr_tm}{m}\right)^2\hat{B}^i\bar{\hat{D}}^id\eta+\fr14\sum_{k,l}\int_\R \fr{\pr_tm}{m}M^2\fr{p'}{p}\hat{B}^i\bar{\hat{D}}^id\eta\\
\nn&&+\fr14\sum_{k,l}\int_\R M^2\pr_t\left(\fr{\pr_tm}{m}\right)\hat{B}^i\bar{\hat{D}}^id\eta\\
\nn&\le&\fr{3}{2}(c+\fr14)\sum_{k,l}\int_\R M^2\fr{\pr_tm}{m\sqrt{p}}\left(\sqrt{p}|\hat{B}^i|\right)|{\hat{D}}^i|d\eta\\
\nn&&+(c+\fr14)\int_\R M^2\fr{|p'|}{p^\fr32}\left(\sqrt{p}|\hat{B}^i|\right)|{\hat{D}}^i|d\eta+\fr{3}{2N}\left\|\sqrt{\fr{\pr_tm_1}{m_1}}MB^i\right\|_{L^2}\left\|\sqrt{\fr{\pr_tm_1}{m_1}}MD^i\right\|_{L^2}\\
\nn&\le&\fr{10(c+\fr14)\eps+3\eps}{2N}\left\|\sqrt{\fr{\pr_tm_1}{m_1}}M\sqrt{-\Dl_L}\fr{B^i}{\eps}\right\|_{L^2}\left\|\sqrt{\fr{\pr_tm_1}{m_1}}MD^i\right\|_{L^2}\\
&\le&\fr{(5c+3)\eps}{2N}\left(\left\|\sqrt{\fr{\pr_tm_1}{m_1}}M\sqrt{-\Dl_L}\fr{B^i}{\eps}\right\|_{L^2}^2+\left\|\sqrt{\fr{\pr_tm_1}{m_1}}MD^i\right\|_{L^2}^2\right).
\eeq
For $i=1$, by the definition of $m_1$, we have
\beq
\nn&&2c\sum_{k,l}\int_\R M^2\fr{k^2}{p}{\hat{B}}^1(\bar{\hat{W}}^2-\bar{\hat{B}}^1)d\eta-\fr12\sum_{k,l}\int_\R \fr{\pr_tm}{m}M^2\fr{k^2}{p}{\hat{B}}^1(\bar{\hat{W}}^2-\bar{\hat{B}}^1)d\eta\\
\nn&\le&\fr{2(c+\fr14)}{N}\left(\left\|\sqrt{\fr{\pr_tm_1}{m_1}}MW^2\right\|_{L^2}+\left\|\sqrt{\fr{\pr_tm_1}{m_1}}MB^1\right\|_{L^2}\right)\left\|\sqrt{\fr{\pr_tm_1}{m_1}}MB^1\right\|_{L^2}\\
&\le&\fr{c+\fr14}{N}\left\|\sqrt{\fr{\pr_tm_1}{m_1}}MW^2\right\|_{L^2}^2+\fr{3(c+\fr14)\eps^2}{N}\left\|\sqrt{\fr{\pr_tm_1}{m_1}}M\fr{B^1}{\eps}\right\|_{L^2}^2.
\eeq
For $i=3$, recalling the definition of $m_3$, there holds
\beq
\nn&&2c\sum_{k,l}\int_\R M^2\fr{kl}{p}{\hat{B}}^3(\bar{\hat{W}}^2-\bar{\hat{B}}^1)d\eta-\fr12\sum_{k,l}\int_\R \fr{\pr_tm}{m}M^2\fr{kl}{p}{\hat{B}}^3(\bar{\hat{W}}^2-\bar{\hat{B}}^1)d\eta\\
\nn&\le&\fr{2(c+\fr14)}{N}\left(\left\|\sqrt{\fr{\pr_tm_3}{m_3}}MW^2\right\|_{L^2}+\left\|\sqrt{\fr{\pr_tm_3}{m_3}}MB^1\right\|_{L^2}\right)\left\|\sqrt{\fr{\pr_tm_3}{m_3}}MB^3\right\|_{L^2}\\
&\le&\fr{c+\fr14}{N}\left(\left\|\sqrt{\fr{\pr_tm_3}{m_3}}MW^2\right\|_{L^2}^2+\eps^2\left\|\sqrt{\fr{\pr_tm_3}{m_3}}M\fr{B^1}{\eps}\right\|_{L^2}^2\right)+\fr{2(c+\fr14)\eps^2}{N}\left\|\sqrt{\fr{\pr_tm_3}{m_3}}M\fr{B^3}{\eps}\right\|_{L^2}^2.
\eeq
Finally, by virtue of the Cauchy-Schwarz inequality and \eqref{wtw},  we are led to
\be\label{cross-visco}
c(\lm+2\mu)\sum_{k,l}\int_\R M^2p\hat{B}^i\bar{\hat{D}}^id\eta
\le\fr{c}{4}\left\|M\sqrt{-\Dl_L}\fr{B^i}{\eps}\right\|_{L^2}^2+c(\lm+2\mu)^2\eps^2\|M\sqrt{-\Dl_L}D^i\|_{L^2}^2.
\ee
On the other hand, using the fact that
\be
\fr{\pr_tm}{m}\le\fr{|p'|}{p}\le\fr{2|k|}{\sqrt{p}}=\fr{2}{\sqrt{N}}\sqrt{\fr{\pr_tm_1}{m_1}},
\ee
we arrive at
\beq\label{cross-visco01}
\nn&&-\fr{\lm+2\mu}{4}\sum_{k,l}\int_\R \fr{\pr_tm}{m}M^2p\hat{B}^i\bar{\hat{D}}^id\eta\le \fr{\lm+2\mu}{2\sqrt{N}}\sum_{k,l}\int_\R \sqrt{\fr{\pr_tm_1}{m_1}}M^2p|\hat{B}^i||{\hat{D}}^i|d\eta\\
&\le&\fr{(\lm+2\mu)\eps^2}{2N}\left\|\sqrt{\fr{\pr_tm_1}{m_1}}M\sqrt{-\Dl_L}\fr{B^i}{\eps}\right\|_{L^2}^2+\fr{\lm+2\mu}{8}\|M\sqrt{-\Dl_L}D^i\|_{L^2}^2.
\eeq
It follows from \eqref{in-h'}--\eqref{cross-visco01} that
\beq\label{in-gh}
\nn&&\sum_{i=1,3}\fr{d}{dt}\Big(\la gB^i, gD^i\ra-\la hB^i, hD^i\ra\Big)\\
\nn&\le&-\fr{3c}{4}\sum_{i=1,3}\left\|M\sqrt{-\Dl_L}\fr{B^i}{\eps}\right\|^2_{L^2}+(\lm+2\mu)\Big(c(\lm+2\mu)\eps^2+\fr{1}{8}\Big)\sum_{i=1,3}\|M\sqrt{-\Dl_L}D^i\|_{L^2}^2\\
\nn&&+\fr14\sum_{i=1,3}\left\|\sqrt{\fr{\pr_tm}{m}}M\sqrt{-\Dl_L}\fr{B^i}{\eps}\right\|^2_{L^2}-\fr14\sum_{i=1,3}\left\|\sqrt{\fr{\pr_tm}{m}}MD^i\right\|^2_{L^2}+{c\sum_{i=1,3}\|MD^i\|^2_{L^2}}\\
\nn&&+c\eps\sum_{i=1,3}\left\|\sqrt{\fr{\pr_tm_2}{m_2}}M\sqrt{-\Dl_L}B^i\right\|_{L^2}^2+\Big(c\eps+\fr{1}{16}\Big)\left\|\sqrt{\fr{\pr_tm_2}{m_2}}MD^i\right\|_{L^2}^2\\
\nn&&+\Big(c\eps+\fr{1}{16}+\fr{\eps_2}{2N}\Big)\sum_{i=1,3}\sum_{j=1,3}\left(\left\|\sqrt{\fr{\pr_tm_j}{m_j}}M\sqrt{-\Dl_L}B^i\right\|_{L^2}^2+\left\|\sqrt{\fr{\pr_tm_j}{m_j}}MD^i\right\|_{L^2}^2\right)\\
&&+\fr{c+\fr14}{N}\sum_{j=1,3}\left\|\sqrt{\fr{\pr_tm_j}{m_j}}MW^2\right\|_{L^2}^2+\sum_{i=1,3}\sum_{j=1,2,3}\left\|\sqrt{\fr{\pr_tm}{m}\fr{\pr_tm_j}{m_j}}MB^i\right\|^2_{L^2},
\eeq
where 
\be\label{eps2}
\eps_2:=(5c+3)\eps+(6c+2)\eps^2+(\lm+2\mu)\eps^2.
\ee
Combining this with \eqref{Es}, we obtain
\beq\label{varE1}
&&\nn\fr12\fr{d}{dt}\mathcal{E}_s^1(t)+\fr{3c}{4}\sum_{i=1,3}\left\|M\sqrt{-\Dl_L}\fr{B^i}{\eps}\right\|^2_{L^2}\\
\nn&&+(\lm+2\mu)\Big(\fr78-c(\lm+2\mu)\eps^2\Big)\sum_{i=1,3}\|M\sqrt{-\Dl_L}D^i\|_{L^2}^2\\
\nn&&+\fr{\mu}{2}\left\|M\sqrt{-\Dl_L}W^2\right\|^2_{L^2}+\mu\left\|M_1\sqrt{-\Dl_L}W^3\right\|^2_{L^2}\\
\nn&&+\Big(\fr{15}{16}-c\eps\Big)\sum_{i=1,3}\left(\left\|\sqrt{\fr{\pr_tm_2}{m_2}}M\sqrt{-\Dl_L}\fr{B^i}{\eps}\right\|_{L^2}^2+\left\|\sqrt{\fr{\pr_tm_2}{m_2}}MD^i\right\|_{L^2}^2\right)\\
\nn&&+\left\|\sqrt{\fr{\pr_tm_2}{m_2}}MW^2\right\|^2_{L^2}+\left\|\sqrt{\fr{\pr_tm_2}{m_2}}M_1W^3\right\|^2_{L^2}+\fr34\sum_{i=1,3}\left\|{\fr{\pr_tm}{m}}M{B^i}\right\|^2_{L^2}\\
\nn&&+c_1\sum_{i=1,3}\sum_{j=1,3}\left(\left\|\sqrt{\fr{\pr_tm_j}{m_j}}M\sqrt{-\Dl_L}\fr{B^i}{\eps}\right\|_{L^2}^2+\left\|\sqrt{\fr{\pr_tm_j}{m_j}}MD^i\right\|_{L^2}^2\right)\\
\nn&&+c_2\sum_{j=1,3}\left(\left\|\sqrt{\fr{\pr_tm_j}{m_j}}MW^2\right\|^2_{L^2}+\left\|\sqrt{\fr{\pr_tm_j}{m_j}}M_1W^3\right\|^2_{L^2}\right)\\
&\le&{\fr{\mu^\fr13}{32}\left(\sum_{i=1,3}\left(\fr12\left\|M\sqrt{-\Dl_L}\fr{B^i}{\eps}\right\|^2_{L^2}+\left\|MD^i\right\|^2_{L^2}\right)+\left\|M_1W^3\right\|^2_{L^2}\right)}\\
\nn&&{+\fr{\mu\eps^2}{2}\left\|M\sqrt{-\Dl_L}\fr{B^1}{\eps}\right\|^2_{L^2}}+{c\sum_{i=1,3}\|MD^i\|^2_{L^2}},
\eeq
where
\[
\begin{split}
c_1=1-\fr{\eps_1}{N}-\left(c\eps+\fr{1}{16}+\fr{\eps_2}{2N}\right)
=\fr{15}{16}-c\eps-\fr{2\eps_1+\eps_2}{2N},
\end{split}
\]
and
\[
c_2=1-\fr{\eps_1}{N}-\fr{c+\fr14}{N}=1-\fr{1+4c+4\eps_1}{4N}.
\]
Let us take
\be\label{cN}
c=\fr{\mu^\fr13}{8}, \ \ \mathrm{and}\ \  N=9(1+\eps)^2.
\ee
Then recalling the definitions of $\eps_1$ and $\eps_2$ in \eqref{eps1} and \eqref{eps2}, respectively, and using \eqref{mu}, the following inequalities hold:
\be\label{small}
\begin{cases}
(\lm+2\mu)\left(\fr78-c(\lm+2\mu)\eps^2\right)\ge\fr12(\lm+2\mu)\ge\fr23\mu,\\[2mm]
\fr{15}{16}-c\eps\ge\fr12,\\[2mm]
\min\{c_1, c_2\}\ge\fr14,
\end{cases}
\ee
where we have used the fact that
\be
\lm+\fr{2}{3}\mu\ge0,\quad{\rm i.e.}\quad \mu\le\fr34(\lm+2\mu).
\ee
Then we infer from \eqref{varE1} that
\beq\label{varE2}
&&\nn\fr{d}{dt}\mathcal{E}^1_s(t)+\fr{3\mu^\fr13}{16}\sum_{i=1,3}\left\|M\sqrt{-\Dl_L}\fr{B^i}{\eps}\right\|^2_{L^2}\\
\nn&&+\mu\left(\sum_{i=1,3}\|M\sqrt{-\Dl_L}D^i\|_{L^2}^2+\left\|M\sqrt{-\Dl_L}W^2\right\|^2_{L^2}+\left\|M_1\sqrt{-\Dl_L}W^3\right\|^2_{L^2}\right)\\
\nn&&+\sum_{i=1,3}\left(\left\|\sqrt{\fr{\pr_tm_2}{m_2}}M\sqrt{-\Dl_L}\fr{B^i}{\eps}\right\|_{L^2}^2+\left\|\sqrt{\fr{\pr_tm_2}{m_2}}MD^i\right\|_{L^2}^2\right)\\
\nn&&+\left\|\sqrt{\fr{\pr_tm_2}{m_2}}MW^2\right\|^2_{L^2}+\left\|\sqrt{\fr{\pr_tm_2}{m_2}}M_1W^3\right\|^2_{L^2}\\
\nn&&+\fr12\sum_{i=1,3}\sum_{j=1,3}\left(\left\|\sqrt{\fr{\pr_tm_j}{m_j}}M\sqrt{-\Dl_L}\fr{B^i}{\eps}\right\|_{L^2}^2+\left\|\sqrt{\fr{\pr_tm_j}{m_j}}MD^i\right\|_{L^2}^2\right)\\
\nn&&+\fr12\sum_{j=1,3}\left(\left\|\sqrt{\fr{\pr_tm_j}{m_j}}MW^2\right\|^2_{L^2}+\left\|\sqrt{\fr{\pr_tm_j}{m_j}}M_1W^3\right\|^2_{L^2}\right)\\
&\le&{\fr{\mu^\fr13}{16}\left(\sum_{i=1,3}\left(\fr12\left\|M\sqrt{-\Dl_L}\fr{B^i}{\eps}\right\|^2_{L^2}+\left\|MD^i\right\|^2_{L^2}\right)+\left\|M_1W^3\right\|^2_{L^2}\right)}\\
\nn&&{+{\mu\eps^2}\left\|M\sqrt{-\Dl_L}\fr{B^1}{\eps}\right\|^2_{L^2}}+{\fr{\mu^\fr13}{4}\sum_{i=1,3}\|MD^i\|^2_{L^2}}.
\eeq
Thanks to \eqref{claim}, one deduces that
\beq\label{en-dis}
\nn&&\fr{3\mu}{4}\left(\sum_{i=1,3}\|M\sqrt{-\Dl_L}D^i\|_{L^2}^2+\left\|M\sqrt{-\Dl_L}W^2\right\|^2_{L^2}+\left\|M_1\sqrt{-\Dl_L}W^3\right\|^2_{L^2}\right)\\
\nn&&+\fr34\left(\sum_{i=1,3}\left\|\sqrt{\fr{\pr_tm_2}{m_2}}MD^i\right\|_{L^2}^2+\left\|\sqrt{\fr{\pr_tm_2}{m_2}}MW^2\right\|^2_{L^2}+\left\|\sqrt{\fr{\pr_tm_2}{m_2}}M_1W^3\right\|^2_{L^2}\right)\\
&\ge&\fr{3\mu^\fr13}{8}\left(\sum_{i=1,3}\|MD^i\|_{L^2}^2+\left\|MW^2\right\|^2_{L^2}+\left\|M_1W^3\right\|^2_{L^2}\right).
\eeq
On the other hand, thanks to \eqref{wtwp}, we find that
\be\label{dis-erro}
\fr14\sum_{i=1,3}\left\|\sqrt{\fr{\pr_tm_1}{m_1}}M\sqrt{-\Dl_L}\fr{B^i}{\eps}\right\|_{L^2}^2\ge\fr{N}{8\eps^2}\sum_{i=1,3}\left\|\sqrt{\fr{\pr_tm}{m}}M{B^i}\right\|_{L^2}^2\ge\fr{\mu^\fr13}{16}\sum_{i=1,3}\left\|\sqrt{\fr{\pr_tm}{m}}M{B^i}\right\|_{L^2}^2.
\ee
Noting that \eqref{mu} implies that $\mu\eps^2\le\fr{\mu^\fr13}{16}$, then the right hand side of \eqref{varE2} can be bounded as follows
\beq\label{rhs}
\nn&&{\fr{\mu^\fr13}{16}\left(\sum_{i=1,3}\left(\fr12\left\|M\sqrt{-\Dl_L}\fr{B^i}{\eps}\right\|^2_{L^2}+\left\|MD^i\right\|^2_{L^2}\right)+\left\|M_1W^3\right\|^2_{L^2}\right)}\\
\nn&&{+{\mu\eps^2}\left\|M\sqrt{-\Dl_L}\fr{B^1}{\eps}\right\|^2_{L^2}}+{\fr{\mu^\fr13}{4}\sum_{i=1,3}\|MD^i\|^2_{L^2}}\\
&\le&\fr{3\mu^\fr13}{32}\sum_{i=1,3}\left\|M\sqrt{-\Dl_L}\fr{B^i}{\eps}\right\|^2_{L^2}+\fr{5\mu^\fr13}{16}\left(\sum_{i=1,3}\|MD^i\|_{L^2}^2+\left\|M_1W^3\right\|^2_{L^2}\right).
\eeq
Now substituting \eqref{en-dis}, \eqref{dis-erro} and \eqref{rhs} into \eqref{varE2}, we conclude that
\beq\label{varE3}
&&\nn\fr{d}{dt}\mathcal{E}_s^1(t)+\fr{\mu^\fr13}{16}E^1_s(t)\\
\nn&&+\fr{\mu}{4}\left(\sum_{i=1,3}\|M\sqrt{-\Dl_L}D^i\|_{L^2}^2+\left\|M\sqrt{-\Dl_L}W^2\right\|^2_{L^2}+\left\|M_1\sqrt{-\Dl_L}W^3\right\|^2_{L^2}\right)\\
\nn&&+\fr14\sum_{i=1,3}\sum_{j=1,2,3}\left(\left\|\sqrt{\fr{\pr_tm_j}{m_j}}M\sqrt{-\Dl_L}\fr{B^i}{\eps}\right\|_{L^2}^2+\left\|\sqrt{\fr{\pr_tm_j}{m_j}}MD^i\right\|_{L^2}^2\right)\\
&&+\fr14\sum_{j=1,2,3}\left(\left\|\sqrt{\fr{\pr_tm_j}{m_j}}MW^2\right\|^2_{L^2}+\left\|\sqrt{\fr{\pr_tm_j}{m_j}}M_1W^3\right\|^2_{L^2}\right)\le0.
\eeq
Recalling the choice of $c$,  using \eqref{wtw} and \eqref{mu}, we see that
\beqno
&&2\left|\sum_{i=1,3}\left(\la gB^i, gD^i\ra-\la hB^i, hD^i\ra\right)\right|\\
&\le&\fr14\sum_{i=1,3}\left(\left\|\sqrt{\fr{\pr_tm}{m}}MB^i\right\|^2_{L^2}+\|MD^i\|^2_{L^2}\right)+\fr{\mu^\fr13\eps}{8}\left(\left\|M\sqrt{-\Dl_L}\fr{B^i}{\eps}\right\|^2_{L^2}+\|MD^i\|^2_{L^2}\right)\\
&\le&\fr38\sum_{i=1,3}\left(\left\|M\sqrt{-\Dl_L}\fr{B^i}{\eps}\right\|^2_{L^2}+\left\|\sqrt{\fr{\pr_tm}{m}}MB^i\right\|^2_{L^2}+\|MD^i\|_{L^2}^2\right).
\eeqno
Accordingly,
\be\label{app1}
\fr58E^1_s(t)\le\mathcal{E}_s^1(t)\le\fr{11}{8}E^1_s(t).
\ee
Combining \eqref{app1} with \eqref{varE3}, the inequality \eqref{enhan13} in Proposition \ref{prop-endis} follows immediately.

\subsection{Estimates of $(B^2, D^2, W^1)$}\label{subsec-4.2}
Define
\be
E_s^2(t):=\left\|M\sqrt{-\Dl_L}\fr{B^2}{\eps}\right\|^2_{L^2}+\left\|\sqrt{\fr{\pr_tm}{m}}M{B^2}\right\|^2_{L^2}+\|MD^2\|^2_{L^2}+\|M_1W^1\|^2_{L^2},
\ee
and
\be
\mathcal{E}_s^2(t):=E^2_s(t)+2\left(\la gB^2, gD^2\ra-\la hB^2, hD^2\ra\right),
\ee
where  the parameter $c$ in the weight $h$ is  chosen as before.
Similar to \eqref{e1}, it is not difficult to verify that
\beq\label{e1'}
\nn&&\fr{1}{2}\fr{d}{dt}E^2_s(t)+(\lm+2\mu)\left\|M\sqrt{-\Dl_L}D^2\right\|^2_{L^2}+\mu\left\|M_1\sqrt{-\Dl_L}W^1\right\|^2_{L^2}\\
\nn&&+\fr34\left\|\fr{\pr_tm}{m}MB^2\right\|^2_{L^2}+\sum_{j=1,2,3}\left\|\sqrt{\fr{\pr_tm}{m}\fr{\pr_tm_j}{m_j}}MB^2\right\|^2_{L^2}\\
\nn&&+\fr34\left(\left\|\sqrt{\fr{\pr_tm}{m}}M\sqrt{-\Dl_L}\fr{B^2}{\eps}\right\|^2_{L^2}+\left\|\sqrt{\fr{\pr_tm}{m}}MD^2\right\|^2_{L^2}\right)+\left\|\sqrt{\fr{\pr_tm}{m}}M_1W^1\right\|^2_{L^2}\\
\nn&&+\sum_{j=1,2,3}\left(\left\|\sqrt{\fr{\pr_tm_j}{m_j}}M\sqrt{-\Dl_L}\fr{B^2}{\eps}\right\|^2_{L^2}+\left\|\sqrt{\fr{\pr_tm_j}{m_j}}MD^2\right\|^2_{L^2}+\left\|\sqrt{\fr{\pr_tm_j}{m_j}}M_1W^1\right\|^2_{L^2}\right)\\
\nn&=&\fr12\sum_{k, l}\int_\mathbb{R}M^2p'\left|\fr{{\hat{B}^2}}{\eps}\right|^2d\eta+\sum_{k,l}\int_\R M^2\fr{p'}{p}|\hat{D}^2|^2d\eta+\sum_{k, l}\int_\R M_1^2\fr{p'}{p}|\hat{W}^1|^2d\eta\\
\nn&&+\sum_{k, l}\int_\R M_1^2\fr{p'}{p}\hat{B}^2\bar{\hat{W}}^1d\eta-\mu\left\la M_1\sqrt{-\Dl_L}B^2, M_1\sqrt{-\Dl_L}W^1\right\ra\\
\nn&&+\fr12\sum_{k,l}\int_{\mathbb{R}}\pr_t\left(\fr{\pr_tm}{m}\right)M^2|\hat{B}^2|^2d\eta-\sum_{k,l}\int_{\mathbb{R}}\fr{\pr_tm}{m}M^2\hat{B}^2\bar{\hat{D}}^2d\eta+\sum_{k,l}\int_{\mathbb{R}}\fr{\pr_tm}{m}M^2\hat{F}\bar{\hat{B}}^2d\eta\\
&&+{\fr{1}{\eps^2}\left\la M\sqrt{-\Dl_L}F,  M\sqrt{-\Dl_L}B^2\right\ra}+\left\la MG, MD^2\right\ra+\left\la M_1H, M_1W^1\right\ra.
\eeq
Using the facts  $\fr{|p'|}{p^{\fr32}}\le\fr{2|k|}{p}\le\fr{2}{N}\fr{\pr_tm_1}{m_1}$, and $M_1\le M$, we arrive at
\beq
\nn\sum_{k, l}\int_\R M_1^2\fr{p'}{p}\hat{B}^2\bar{\hat{W}}^1d\eta&\le&\sum_{k, l}\int_\R M_1^2\fr{p'}{p^{\fr32}}\left(\sqrt{p}\left|\hat{B}^2\right|\right)\left|{\hat{W}}^1\right|d\eta\\
\nn&\le&\fr{2}{N}\left\|\sqrt{\fr{\pr_tm_1}{m_1}}M_1\sqrt{-\Dl_L}B^2\right\|_{L^2}\left\|\sqrt{\fr{\pr_tm_1}{m_1}}M_1W^1\right\|_{L^2}\\
&\le&\fr{\eps}{N}\left(\left\|\sqrt{\fr{\pr_tm_1}{m_1}}M\sqrt{-\Dl_L}\fr{B^2}{\eps}\right\|_{L^2}^2+\left\|\sqrt{\fr{\pr_tm_1}{m_1}}M_1W^1\right\|_{L^2}^2\right).
\eeq
Similar to \eqref{Bi1} and \eqref{Bi2}, we have
\be
\fr12\sum_{k,l}\int_{\mathbb{R}}\pr_t\left(\fr{\pr_tm}{m}\right)M^2|\hat{B}^2|^2d\eta\le\fr{3\eps^2}{N}\left\|\sqrt{\fr{\pr_tm_1}{m_1}}M\sqrt{-\Dl_L}\fr{B^2}{\eps}\right\|^2_{L^2},
\ee
and
\beq
\nn&&-\sum_{k,l}\int_{\mathbb{R}}\fr{\pr_tm}{m}M^2\hat{B}^2\bar{\hat{D}}^2d\eta+\sum_{k,l}\int_{\mathbb{R}}\fr{\pr_tm}{m}M^2\hat{F}\bar{\hat{B}}^2d\eta\\
\nn&\le&\fr{2}{N}\left\|\sqrt{\fr{\pr_tm_1}{m_1}}M\sqrt{-\Dl_L}B^2\right\|_{L^2}\left(\left\|\sqrt{\fr{\pr_tm_1}{m_1}}MD^2\right\|_{L^2}+\left\|\sqrt{\fr{\pr_tm_1}{m_1}}M\sqrt{-\Dl_L}F\right\|_{L^2}\right)\\
&\le&\fr{\eps(1+\eps)}{N}\left\|\sqrt{\fr{\pr_tm_1}{m_1}}M\sqrt{-\Dl_L}\fr{B^2}{\eps}\right\|_{L^2}^2+\fr{\eps}{N}\left\|\sqrt{\fr{\pr_tm_1}{m_1}}MD^2\right\|^2_{L^2}+\fr{\eps^2}{N}\left\|\sqrt{\fr{\pr_tm_1}{m_1}}M\sqrt{-\Dl_L}\fr{F}{\eps}\right\|_{L^2}^2.
\eeq
The Cauchy-Schwarz inequality implies that
\be\label{CS}
\begin{split}
&{\fr{1}{\eps^2}\left\la M\sqrt{-\Dl_L}F,  M\sqrt{-\Dl_L}B^2\right\ra}+\left\la MG, MD^2\right\ra+{\left\la M_1H, M_1W^1\right\ra}\\
\le&{\fr{\mu^\fr13}{256}\left(\left\|M\sqrt{-\Dl_L}\fr{B^2}{\eps}\right\|^2_{L^2}+\|MD^2\|^2_{L^2}+\|M_1W^1\|^2_{L^2}\right)}\\
&+\fr{64}{\mu^\fr13}\left(\left\|M\sqrt{-\Dl_L}\fr{F}{\eps}\right\|^2_{L^2}+\|MG\|^2_{L^2}+{\|M_1H\|^2_{L^2}}\right).
\end{split}
\ee
Then arguing as \eqref{role-m}, one deduces from  \eqref{e1'}--\eqref{CS} that
\beq\label{e1-2}
\nn&&\fr{1}{2}\fr{d}{dt}E^2_s(t)+(\lm+2\mu)\left\|M\sqrt{-\Dl_L}D^2\right\|^2_{L^2}+\fr{\mu}{2}\left\|M_1\sqrt{-\Dl_L}W^1\right\|^2_{L^2}\\
\nn&&+\fr34\left\|\fr{\pr_tm}{m}MB^2\right\|^2_{L^2}+\sum_{j=1,2,3}\left\|\sqrt{\fr{\pr_tm}{m}\fr{\pr_tm_j}{m_j}}MB^2\right\|^2_{L^2}\\
\nn&&+\left(\left\|\sqrt{\fr{\pr_tm_2}{m_2}}M\sqrt{-\Dl_L}\fr{B^2}{\eps}\right\|^2_{L^2}+\left\|\sqrt{\fr{\pr_tm_2}{m_2}}MD^2\right\|^2_{L^2}+\left\|\sqrt{\fr{\pr_tm_2}{m_2}}M_1W^1\right\|^2_{L^2}\right)\\
\nn&&+\left(1-\fr{\eps_3}{N}\right)\sum_{j=1,3}\left(\left\|\sqrt{\fr{\pr_tm_j}{m_j}}M\sqrt{-\Dl_L}\fr{B^2}{\eps}\right\|^2_{L^2}+\left\|\sqrt{\fr{\pr_tm_j}{m_j}}MD^2\right\|^2_{L^2}+\left\|\sqrt{\fr{\pr_tm_j}{m_j}}M_1W^1\right\|^2_{L^2}\right)\\
\nn&\le&\fr14\left(\left\|\sqrt{\fr{\pr_tm}{m}}MD^2\right\|^2_{L^2}-\left\|\sqrt{\fr{\pr_tm}{m}}M\sqrt{-\Dl_L}\fr{B^2}{\eps}\right\|^2_{L^2}\right)\\
\nn&&+{\left(\fr{5\mu^\fr13}{256}+\fr{\mu\eps^2}{2}\right)\left\|M\sqrt{-\Dl_L}\fr{B^2}{\eps}\right\|^2_{L^2}+\fr{9\mu^\fr13}{256}\left(\|MD^2\|^2_{L^2}+\|M_1W^1\|^2_{L^2}\right)}\\
&&+\fr{64}{\mu^\fr13}\left(\left\|M\sqrt{-\Dl_L}\fr{F}{\eps}\right\|^2_{L^2}+\|MG\|^2_{L^2}+{\|M_1H\|^2_{L^2}}\right)+\fr{\eps^2}{N}\left\|\sqrt{\fr{\pr_tm_1}{m_1}}M\sqrt{-\Dl_L}\fr{F}{\eps}\right\|_{L^2}^2,
\eeq
where 
\be
\eps_3:=4\eps^2+2\eps.
\ee
Next, direct calculations show that the evolutions of $\la hB^2, hD^2\ra$ and $\la gB^2, gD^2$ are given as follows
\beq\label{in-h2}
\nn\fr{d}{dt}\la hB^2, hD^2\ra&=&c\left\|M\sqrt{-\Dl_L}\fr{B^2}{\eps}\right\|^2_{L^2}-c\|MD^2\|^2_{L^2}-c(\lm+2\mu)\sum_{k,l}\int_\R pM^2\hat{B}^2\bar{\hat{D}}^2d\eta\\
\nn&&+c\sum_{k,l}\int_\R M^2\fr{p'}{p}\hat{B}^2\bar{\hat{D}}^2d\eta-\fr{3c}{2}\sum_{k,l}\int_\R \fr{\pr_tm}{m}M^2\hat{B}^2\bar{\hat{D}}^2d\eta\\
&&-2c\sum_{j=1,2,3}\sum_{k,l}\int_\R M^2\fr{\pr_tm_j}{m_j}\hat{B}^2\bar{\hat{D}}^2d\eta+c\left\la MF,  MD^2\right\ra+c\left\la MB^2, MG\right\ra,
\eeq
and
\beq\label{in-g2}
\nn&&\fr{d}{dt}\la gB^2, gD^2\ra\\
\nn&=&\fr14\left(\left\|\sqrt{\fr{\pr_tm}{m}}M\sqrt{-\Dl_L}\fr{B^2}{\eps}\right\|^2_{L^2}-\left\|\sqrt{\fr{\pr_tm}{m}}MD^2\right\|^2_{L^2}\right)-\fr{\lm+2\mu}{4}\sum_{k,l}\int_\R p\fr{\pr_tm}{m}M^2\hat{B}^2\bar{\hat{D}}^2d\eta\\
\nn&&+\fr14\sum_{k,l}\int_\R \fr{\pr_tm}{m}M^2\fr{p'}{p}\hat{B}^2\bar{\hat{D}}^2d\eta-\fr38\sum_{k,l}\int_\R \left(\fr{\pr_tm}{m}\right)^2M^2\hat{B}^2\bar{\hat{D}}^2d\eta\\
\nn&&+\fr14\sum_{k,l}\int_\R\pr_t\left(\fr{\pr_tm}{m}\right)M^2\hat{B}^2\bar{\hat{D}}^2d\eta-\fr12\sum_{j=1,2,3}\sum_{k,l}\int_\R \fr{\pr_tm}{m}\fr{\pr_tm_j}{m_j}M^2\hat{B}^2\bar{\hat{D}}^2d\eta\\
&&+\fr14\left\la \sqrt{\fr{\pr_tm}{m}}MF,  \sqrt{\fr{\pr_tm}{m}}MD^2\right\ra+\fr14\left\la \sqrt{\fr{\pr_tm}{m}}MB^2, \sqrt{\fr{\pr_tm}{m}}MG\right\ra.
\eeq
Recalling that $c=\fr{\mu^\fr13}{8}$, using the Cauchy-Schwarz inequality,  we have
\beq\label{sor1}
\nn&&-c\left\la MF,  MD^2\right\ra-c\left\la MB^2, MG\right\ra\\
&\le&\fr{\mu^\fr13}{256}\left(\left\|M\sqrt{-\Dl_L}\fr{B^2}{\eps}\right\|^2_{L^2}+\|MD^2\|^2_{L^2}\right)+\fr{(\mu^\fr13\eps)^2}{\mu^\fr13}\left(\left\|M\fr{F}{\eps}\right\|^2_{L^2}+\|MG\|^2_{L^2}\right).
\eeq
Using \eqref{wtwp} again, we obtain
\beq\label{sor-01}
\nn&&\fr14\left\la \sqrt{\fr{\pr_tm}{m}}MF,  \sqrt{\fr{\pr_tm}{m}}MD^2\right\ra+\fr14\left\la \sqrt{\fr{\pr_tm}{m}}MB^2, \sqrt{\fr{\pr_tm}{m}}MG\right\ra\\
\nn&\le&\fr{\eps}{4N}\left(\left\|\sqrt{\fr{\pr_tm_1}{m_1}}M\sqrt{-\Dl_L}\fr{B^2}{\eps}\right\|^2_{L^2}+\left\|\sqrt{\fr{\pr_tm_1}{m_1}}MD^2\right\|^2_{L^2}\right.\\
&&\left.+\left\|\sqrt{\fr{\pr_tm_1}{m_1}}M\sqrt{-\Dl_L}\fr{F}{\eps}\right\|^2_{L^2}+\left\|\sqrt{\fr{\pr_tm_1}{m_1}}MG\right\|^2_{L^2}\right).
\eeq
Now arguing as \eqref{role-mj},  \eqref{role-m1} and \eqref{cross-visco},   from \eqref{in-h2}--\eqref{sor-01}, we find  that
\beq\label{in-gh2}
\nn&&\fr{d}{dt}\Big(\la gB^2, gD^2\ra-\la hB^2, hD^2\ra\Big)\\
\nn&\le&\fr{1}{4}\left(\left\|\sqrt{\fr{\pr_tm}{m}}M\sqrt{-\Dl_L}\fr{B^2}{\eps}\right\|^2_{L^2}-\left\|\sqrt{\fr{\pr_tm}{m}}MD^2\right\|^2_{L^2}\right)\\
\nn&&-\fr{23\mu^\fr13}{256}\left\|M\sqrt{-\Dl_L}\fr{{B}^2}{\eps}\right\|^2_{L^2}+(\lm+2\mu)\Big(c(\lm+2\mu)\eps^2+\fr{1}{8}\Big)\|M\sqrt{-\Dl_L}D^2\|_{L^2}^2\\
\nn&&+c\eps\left\|\sqrt{\fr{\pr_tm_2}{m_2}}M\sqrt{-\Dl_L}\fr{B^2}{\eps}\right\|_{L^2}^2+\Big(c\eps+\fr{1}{16}\Big)\left\|\sqrt{\fr{\pr_tm_2}{m_2}}MD^2\right\|_{L^2}^2+\sum_{j=1,2,3}\left\|\sqrt{\fr{\pr_tm}{m}\fr{\pr_tm_j}{m_j}}MB^2\right\|^2_{L^2}\\
\nn&&+\Big(c\eps+\fr{1}{16}+\fr{\eps_4}{4N}\Big)\sum_{j=1,3}\left(\left\|\sqrt{\fr{\pr_tm_j}{m_j}}M\sqrt{-\Dl_L}\fr{B^2}{\eps}\right\|_{L^2}^2+\left\|\sqrt{\fr{\pr_tm_j}{m_j}}MD^2\right\|_{L^2}^2\right)\\
\nn&&+{\fr{33\mu^\fr13}{256}\|MD^2\|^2_{L^2}}+\fr{(\mu^\fr13\eps)^2}{\mu^\fr13}\left(\left\|M\sqrt{-\Dl_L}\fr{F}{\eps}\right\|^2_{L^2}+\|MG\|^2_{L^2}\right)\\
&&+\fr{\eps}{4N}\left(\left\|\sqrt{\fr{\pr_tm_1}{m_1}}M\sqrt{-\Dl_L}\fr{F}{\eps}\right\|^2_{L^2}+\left\|\sqrt{\fr{\pr_tm_1}{m_1}}MG\right\|^2_{L^2}\right),
\eeq
where
\be\label{eps4}
\eps_4:=2(5c+3)\eps+2(\lm+2\mu)\eps^2+\eps.
\ee
Noting that 
\[
\fr{15}{16}-c\eps-\fr{4\eps_3+\eps_4}{4N}\ge c_1,
\]
then  \eqref{small} holds. Thus, we infer from \eqref{e1-2} and \eqref{in-gh2} that
\beq\label{varE2-2}
&&\nn\fr{d}{dt}\mathcal{E}^2_s(t)+\fr{9\mu^\fr13}{64}\left\|M\sqrt{-\Dl_L}\fr{B^2}{\eps}\right\|^2_{L^2}+\mu\left(\|M\sqrt{-\Dl_L}D^2\|_{L^2}^2+\left\|M_1\sqrt{-\Dl_L}W^1\right\|^2_{L^2}\right)\\
\nn&&+\left\|\sqrt{\fr{\pr_tm_2}{m_2}}M\sqrt{-\Dl_L}\fr{B^2}{\eps}\right\|_{L^2}^2+\left\|\sqrt{\fr{\pr_tm_2}{m_2}}MD^2\right\|_{L^2}^2+\left\|\sqrt{\fr{\pr_tm_2}{m_2}}M_1W^1\right\|^2_{L^2}\\
\nn&&+\fr12\sum_{j=1,3}\left(\left\|\sqrt{\fr{\pr_tm_j}{m_j}}M\sqrt{-\Dl_L}\fr{B^2}{\eps}\right\|_{L^2}^2+\left\|\sqrt{\fr{\pr_tm_j}{m_j}}MD^2\right\|_{L^2}^2+\left\|\sqrt{\fr{\pr_tm_j}{m_j}}M_1W^1\right\|^2_{L^2}\right)\\
\nn&\le&{{\mu\eps^2}\left\|M\sqrt{-\Dl_L}\fr{B^2}{\eps}\right\|^2_{L^2}+\fr{21\mu^\fr13}{64}\left(\|MD^2\|^2_{L^2}+\|M_1W^1\|^2_{L^2}\right)}\\
\nn&&+\fr{130}{\mu^\fr13}\left(\left\|M\sqrt{-\Dl_L}\fr{F}{\eps}\right\|^2_{L^2}+\|MG\|^2_{L^2}+\|M_1H\|^2_{L^2}\right)\\
&&+\fr{\eps+4\eps^2}{2N}\left(\left\|\sqrt{\fr{\pr_tm_1}{m_1}}M\sqrt{-\Dl_L}\fr{F}{\eps}\right\|^2_{L^2}+\left\|\sqrt{\fr{\pr_tm_1}{m_1}}MG\right\|^2_{L^2}\right).
\eeq
Then it follows from \eqref{claim}, \eqref{mu} and \eqref{varE2-2} that 
\beq\label{varE2-3}
&&\nn\fr{d}{dt}\mathcal{E}^2_s(t)+\fr{\mu^\fr13}{32}E_s^2(t)+\fr{\mu}{4}\left(\|M\sqrt{-\Dl_L}D^2\|_{L^2}^2+\left\|M_1\sqrt{-\Dl_L}W^1\right\|^2_{L^2}\right)\\
\nn&&+\fr14\sum_{j=1,2,3}\left(\left\|\sqrt{\fr{\pr_tm_j}{m_j}}M\sqrt{-\Dl_L}\fr{B^2}{\eps}\right\|_{L^2}^2+\left\|\sqrt{\fr{\pr_tm_j}{m_j}}MD^2\right\|_{L^2}^2+\left\|\sqrt{\fr{\pr_tm_j}{m_j}}M_1W^1\right\|^2_{L^2}\right)\\
\nn&\le&\fr{130}{\mu^\fr13}\left(\left\|M\sqrt{-\Dl_L}\fr{F}{\eps}\right\|^2_{L^2}+\|MG\|^2_{L^2}+\|M_1H\|^2_{L^2}\right)\\
&&+\fr{\eps+4\eps^2}{2N}\left(\left\|\sqrt{\fr{\pr_tm_1}{m_1}}M\sqrt{-\Dl_L}\fr{F}{\eps}\right\|^2_{L^2}+\left\|\sqrt{\fr{\pr_tm_1}{m_1}}MG\right\|^2_{L^2}\right),
\eeq
where we have used \eqref{dis-erro} with $i=1,3$ replaced by $i=2$. Obviously, by the definition of the multiplier $m_1$, for $k\neq0$, there holds $\fr{1}{p}\le\fr{1}{N}\fr{\pr_tm_1}{m_1}$. Thus, 
\beno
\|MB^1\|^2_{L^2}=\sum_{k,l}\int_{\mathbb{R}}\fr{1}{p}M^2|\sqrt{p}\hat{B}^1|^2d\eta\le\fr{\eps^2}{N}\left\|\sqrt{\fr{\pr_tm_1}{m_1}}M\sqrt{-\Dl_L}\fr{B^1}{\eps}\right\|_{L^2}^2.
\eeno
Accordingly, recalling the definitions of $F, G$ and $H$, it is easy to verify that 

\beq
\nn&&\left\|M\sqrt{-\Dl_L}\fr{F}{\eps}\right\|^2_{L^2}+\|MG\|^2_{L^2}+\|M_1H\|^2_{L^2}\\
\nn&\le&\left\|M\sqrt{-\Dl_L}\fr{B^1}{\eps}\right\|^2_{L^2}+35\left(\|MD^1\|_{L^2}^2+\|MW^2\|_{L^2}^2+\|MB^1\|_{L^2}^2\right)\\
&\le&35\left(\left\|M\sqrt{-\Dl_L}\fr{B^1}{\eps}\right\|^2_{L^2}+\|MD^1\|_{L^2}^2+\|MW^2\|_{L^2}^2+\fr{\eps^2}{N}\left\|\sqrt{\fr{\pr_tm_1}{m_1}}M\sqrt{-\Dl_L}\fr{B^1}{\eps}\right\|_{L^2}^2\right).
\eeq
Substituting this into \eqref{varE2-3}, and recalling that $N$ is chosen to be $9(1+\eps)^2$ in \eqref{cN}, we are led to
\beq\label{varE2-4}
&&\nn\fr{d}{dt}\mathcal{E}^2_s(t)+\fr{\mu^\fr13}{32}E_s^2(t)+\fr{\mu}{4}\left(\|M\sqrt{-\Dl_L}D^2\|_{L^2}^2+\left\|M_1\sqrt{-\Dl_L}W^1\right\|^2_{L^2}\right)\\
\nn&&+\fr14\sum_{j=1,2,3}\left(\left\|\sqrt{\fr{\pr_tm_j}{m_j}}M\sqrt{-\Dl_L}\fr{B^2}{\eps}\right\|_{L^2}^2+\left\|\sqrt{\fr{\pr_tm_j}{m_j}}MD^2\right\|_{L^2}^2+\left\|\sqrt{\fr{\pr_tm_j}{m_j}}M_1W^1\right\|^2_{L^2}\right)\\
\nn&\le&\fr{4550}{\mu^\fr13}\left(\left\|M\sqrt{-\Dl_L}\fr{B^1}{\eps}\right\|^2_{L^2}+\|MD^1\|_{L^2}+\|MW^2\|_{L^2}^2+\left\|\sqrt{\fr{\pr_tm_1}{m_1}}M\sqrt{-\Dl_L}\fr{B^1}{\eps}\right\|_{L^2}^2\right)\\
&&+\left((1+\eps^2)\left\|\sqrt{\fr{\pr_tm_1}{m_1}}M\sqrt{-\Dl_L}\fr{B^1}{\eps}\right\|^2_{L^2}+\left\|\sqrt{\fr{\pr_tm_1}{m_1}}MD^1\right\|^2_{L^2}+\left\|\sqrt{\fr{\pr_tm_1}{m_1}}MW^2\right\|^2_{L^2}\right).
\eeq
Multiplying \eqref{varE2-4} by $c_0\mu^{\fr23}$ with positive constant $c_0$ so small that
\be\label{c_0}
c_0\le\fr{1}{32\times4550},
\ee  
then adding the resulting inequality to \eqref{varE3}, and  using \eqref{mu} again, one deduces that
\beq\label{varE2-5}
&&\nn\fr{d}{dt}\left(\mathcal{E}_s^1(t)+c_0\mu^{\fr23}\mathcal{E}^2_s(t)\right)+\fr{\mu^\fr13}{32}\left(E^1_s(t)+c_0\mu^{\fr23}E_s^2(t)\right)\\
\nn&&+\fr{\mu}{4}\left(\sum_{i=1,3}\|M\sqrt{-\Dl_L}D^i\|_{L^2}^2+\left\|M\sqrt{-\Dl_L}W^2\right\|^2_{L^2}+\left\|M_1\sqrt{-\Dl_L}W^3\right\|^2_{L^2}\right)\\
\nn&&+\fr{c_0\mu^{\fr{5}{3}}}{4}\left(\|M\sqrt{-\Dl_L}D^2\|_{L^2}^2+\left\|M_1\sqrt{-\Dl_L}W^1\right\|^2_{L^2}\right)\\
\nn&&+\fr18\sum_{i=1,3}\sum_{j=1,2,3}\left(\left\|\sqrt{\fr{\pr_tm_j}{m_j}}M\sqrt{-\Dl_L}\fr{B^i}{\eps}\right\|_{L^2}^2+\left\|\sqrt{\fr{\pr_tm_j}{m_j}}MD^i\right\|_{L^2}^2\right)\\
\nn&&+\fr18\sum_{j=1,2,3}\left(\left\|\sqrt{\fr{\pr_tm_j}{m_j}}MW^2\right\|^2_{L^2}+\left\|\sqrt{\fr{\pr_tm_j}{m_j}}M_1W^3\right\|^2_{L^2}\right)\\
\nn&&+\fr{c_0\mu^{\fr{2}{3}}}{4}\sum_{j=1,2,3}\left(\left\|\sqrt{\fr{\pr_tm_j}{m_j}}M\sqrt{-\Dl_L}\fr{B^2}{\eps}\right\|_{L^2}^2+\left\|\sqrt{\fr{\pr_tm_j}{m_j}}MD^2\right\|_{L^2}^2+\left\|\sqrt{\fr{\pr_tm_j}{m_j}}M_1W^1\right\|^2_{L^2}\right)\le0.
\eeq
Obviously, the relation \eqref{app1} holds for ${E}_s^2(t)$ and $\mathcal{E}^2_s(t)$ as well, and thus one easily obtain \eqref{enhan2}. We complete the proof Proposition \ref{prop-endis}.  

\section{Proof of the main results}
In this section, we complete the proof of  Theorems \ref{coro1}--\ref{thm-0}  one by one. As pointed out in Remark \ref{rem0}, we shall prove more than we need in Theorem \ref{coro1} below. To begin with, recalling the change of coordinates \eqref{ch-co}, and using \eqref{m-1}, \eqref{m-2} and \eqref{enhan13}, we arrive at
\beq
\nn\|\pr_xb\|_{L^2}&=&\left\|B^1_\neq\right\|_{L^2}\\
\nn&\le&\left\|m^{-\fr12}\sqrt{-\Dl_L}B^1_\neq\right\|_{L^2}\\
\nn&\le&C\mu^{-\fr16}\left\|m^{-\fr34}\sqrt{-\Dl_L}B^1_\neq\right\|_{L^2}\\
\nn&\le&C\mu^{-\fr16}e^{-\fr{\mu^\fr13t}{44}}\|((\Dl b_{\mathrm{in}})_{\neq}, (\Dl u_\mathrm{in})_{\neq})\|_{L^2},
\eeq
and the estimate for $\pr_zb_{\neq}$ is similar, thus \eqref{bxz} holds. Next, we postpone the estimate of $u^1_\neq$ and prove \eqref{1.14} and \eqref{1.15} first, since $u^2_\neq$ and $u^3_\neq$ are easy to treat.  Recalling that ${\bf w}^2=\Dl u^2_\neq-\pr_y\dv u+\pr_xb$, then using \eqref{m-2}, \eqref{enhan13} and \eqref{enhan2}, we find that
\beq
\nn\|\Dl_{x,z}u^2_{\neq}\|_{L^2}&=&\|\Dl_{x,z}\Dl^{-1}({\bf w}^2_\neq-\pr_xb_\neq+\pr_y\dv u)\|_{L^2}\\
\nn&=&\|\Dl_{X,Z}\Dl^{-1}_{L}(W^2_\neq-B^1_\neq+D^2_\neq)\|_{L^2}\\
\nn&\le&\|m^{-1}(W^2_\neq-B^1_\neq+D^2_\neq)\|_{L^2}\\
\nn&\le&C\left(\|m^{-\fr34}W^2_\neq\|_{L^2}+\left\|m^{-\fr34}\sqrt{-\Dl_L}\fr{B^1_\neq}{\eps}\right\|_{L^2}+\|m^{-\fr34}D^2_\neq\|_{L^2}\right)\\
\nn&\le&C\mu^{-\fr13}e^{-\fr{\mu^\fr13t}{88}}\|((\Dl b_{\mathrm{in}})_{\neq}, (\Dl u_\mathrm{in})_{\neq})\|_{L^2},
\eeq
thus \eqref{1.14} holds. We would like to point out  that \eqref{enhan2} does not need to be involved in  the estimates of $\Dl_{x,z}u^3_\neq$ and $\pr_{xx}u^1$. More precisely, it follows from \eqref{m-2} and \eqref{enhan13} that
\beq
\nn\|\Dl_{x,z}u^3_{\neq}\|_{L^2}&=&\|\Dl_{x,z}\Dl^{-1}({ w}^3_\neq+\pr_z\dv u)\|_{L^2}\\
\nn&=&\|\Dl_{X,Z}\Dl^{-1}_{L}(W^3_\neq+D^3_\neq)\|_{L^2}\\
\nn&\le&\|m^{-1}(W^3_\neq+D^3_\neq)\|_{L^2}\\
&\le&Ce^{-\fr{\mu^\fr13t}{44}}\|((\Dl b_{\mathrm{in}})_{\neq}, (\Dl u_\mathrm{in})_{\neq})\|_{L^2},
\eeq
this gives the proof of \eqref{1.15}. For the proof of \eqref{prxxu1}, we shall use the incompressibility of $w=\Dl u-\nb\dv u$ to avoid the appearance of $w^1$ and give a refined estimate of $W^2_\neq$. In fact, by virtue of $\dv w=0$, \eqref{m-2} and \eqref{enhan13}, there holds
\beq\label{u1xx}
\nn\|\pr_{xx}u^1\|_{L^2}&=&\|\pr_{xx}\Dl^{-1}({ w}^1+\pr_x\dv u)\|_{L^2}\\
\nn&=&\|\pr_{xx}\Dl^{-1}{ \bf d}^1-\pr_{xy}\Dl^{-1}w^2-\pr_{xz}\Dl^{-1}w^3\|_{L^2}\\
\nn&=&\|\pr_{xx}\Dl^{-1}{ \bf d}^1-\pr_{xy}\Dl^{-1}{\bf w}^2+\pr_{xy}\Dl^{-1}{\bf b}^1-\pr_{xz}\Dl^{-1}w^3\|_{L^2}\\
\nn&=&\|\pr_{XX}\Dl^{-1}_{L}D_{\neq}^1-\pr_{XY}^{L}\Dl^{-1}_{L}W_{\neq}^2+\pr^{L}_{XY}\Dl^{-1}_{L}B_{\neq}^1-\pr_{XZ}\Dl^{-1}_{L}W_{\neq}^3\|_{L^2}\\
\nn&\le&\|m^{-1}(D^1_{\neq}+W^3_{\neq}+\sqrt{-\Dl_L}B^1_{\neq})\|_{L^2}+\|m^{-\fr12}W^2_{\neq}\|_{L^2}\\
&\le&Ce^{-\fr{\mu^\fr13t}{44}}\|((\Dl b_{\mathrm{in}})_{\neq}, (\Dl u_\mathrm{in})_{\neq})\|_{L^2}+\|m^{-\fr14}W^2_{\neq}\|_{L^2}.
\eeq
We proceed  via the equation $\eqref{Lx'}_3$ to bound $\|m^{-\fr14}W^2_{\neq}\|_{L^2}$. To this end, let us define 
\[
\tl{M}:=e^{\fr{\mu^\fr13t}{88}}m^{-\fr14}m_2^{-1}{\bf 1}_{k\neq0}.
\]
Similar to \eqref{e-W2}, we have
\beq\label{e-tlW2}
\nn&&\fr{1}{2}\fr{d}{dt}\|\tl{M}W^2\|^2_{L^2}+\fr14\left\|\sqrt{\fr{\pr_tm}{m}}\tl{M}W^2\right\|^2_{L^2}\\
\nn&&+\left\|\sqrt{\fr{\pr_tm_2}{m_2}}\tl{M}W^2\right\|^2_{L^2}+\mu\left\|\tl{M}\sqrt{-\Dl_L}W^2\right\|^2_{L^2}\\
\nn&=&\fr{\mu^\fr13}{44}\|\tl{M}W^2\|^2_{L^2}+\mu\left\la \tl{M}\sqrt{-\Dl_L}B^1, \tl{M}\sqrt{-\Dl_L}W^2\right\ra\\
&\le&\fr{\mu^\fr13}{44}\|\tl{M}W^2\|^2_{L^2}+\fr{\mu}{4}\|\tl{M}\sqrt{-\Dl_L}W^2\|_{L^2}^2+ \mu\|\tl{M}\sqrt{-\Dl_L}B^1\|_{L^2}^2.
\eeq
Thanks to \eqref{claim}, the first two terms on the right hand side of \eqref{e-tlW2} can be absorbed by the left hand side. On the other hand, in view of \eqref{m-1} and \eqref{enhan13}, we find that
\[
\|\tl{M}\sqrt{-\Dl_L}B^1\|_{L^2}^2\le C\mu^{-\fr23}\left\|e^{\fr{\mu^\fr13t}{88}}m^{-\fr34}\sqrt{-\Dl_L}B^1_{\neq}\right\|_{L^2}^2\le C\mu^{-\fr23}e^{-\fr{\mu^\fr13t}{44}}\|((\Dl b_{\mathrm{in}})_{\neq}, (\Dl u_\mathrm{in})_{\neq})\|_{L^2}^2.
\]
Substituting this into \eqref{e-tlW2}, and integrating with respect to the time variable, one deduces that
\beq
\nn&&\|\tl{M}W^2(t)\|^2_{L^2}+\mu^\fr13\|\tl{M}W^2\|^2_{L^2L^2}+\mu\left\|\tl{M}\sqrt{-\Dl_L}W^2\right\|^2_{L^2L^2}\\
\nn&\le&C\|W^2_{\neq}(0)\|_{L^2}^2+C\|((\Dl b_{\mathrm{in}})_{\neq}, (\Dl u_\mathrm{in})_{\neq})\|_{L^2}^2\\
&\le&C\|((\Dl b_{\mathrm{in}})_{\neq}, (\Dl u_\mathrm{in})_{\neq})\|_{L^2}^2,
\eeq
which implies that
\be
\|m^{-\fr14}W^2_\neq(t)\|_{L^2}\le Ce^{-\fr{\mu^\fr13t}{88}}\|((\Dl b_{\mathrm{in}})_{\neq}, (\Dl u_\mathrm{in})_{\neq})\|_{L^2}.
\ee
It follows from this and \eqref{u1xx} that \eqref{prxxu1} holds. We complete the proof of Theorem \ref{coro1}.

Now we  turn to the proof of  \ref{thm-endis}. To this end, recalling  the definitions of $(B^i, D^i, W^i), i=2, 3$, we have
\beqno
&&\left\|\frac{\nb \nb_{x,z}b_\neq}{\eps}\right\|^2_{L^2}+\|\nb_{x,z}\dv u_\neq\|^2_{L^2}+\|{\bf w}^2_\neq\|^2_{L^2}+\|w^3_\neq\|^2_{L^2}\\
&=&\sum_{i=1,3}\left(\left\|\sqrt{-\Dl_L}\frac{B^i_\neq}{\eps}\right\|^2_{L^2}+\|D^i_\neq\|^2_{L^2}\right)+\|W^2_\neq\|^2_{L^2}+\|W^3_\neq\|^2_{L^2}.
\eeqno
Then   from \eqref{m-1},  \eqref{enhan13}, the fact ${\bf w}^2=\Dl u^2-\pr_y\dv u+\pr_xb$, and
\beno
\|\Dl u^2_\neq-\pr_y\dv u_\neq\|_{L^2}\le\|\Dl u^2_\neq-\pr_y\dv u_\neq+\pr_xb\|_{L^2}+\eps\left\|\fr{\nb\pr_xb}{\eps}\right\|_{L^2},
\eeno
we find that \eqref{enhan-13} holds.
Similarly,  \eqref{enhan-2} is a consequcence of \eqref{m-1},  \eqref{enhan2} and Remark \ref{rem4.1}. This completes the proof of Theorem \ref{thm-endis}.

Finally, we are left to prove Theorem \ref{thm-0}. In fact, \eqref{en-0} is nothing but \eqref{en0} in Proposition \ref{prop-en0}. Combining  \eqref{decay1} with \eqref{de1} and \eqref{de2}, and using \eqref{visco}, one easily deduces that \eqref{decay0} holds. From \eqref{liftup} and \eqref{visco}, we get \eqref{lift-up}. The proof of Theorem \ref{thm-0} is completed.

\bigbreak
\noindent{\bf Acknowledgments}
L. Zeng  is supported by the  Postdoctoral Science Foundation of China under Grant  8206400030. Z. Zhang is partially supported by  NSF of China  under Grant 11425103. R. Zi is partially  supported by NSF of China under  Grants 11871236 and 11971193.
\bigbreak
\noindent{\bf Conflict of interest} The authors declare that they have no conflict of interest.

\bigbreak

\end{document}